\begin{document}

\renewcommand{\theenumi}{\rm (\roman{enumi})}
\renewcommand{\labelenumi}{\rm \theenumi}

\newtheorem{thm}{Theorem}[section]
\newtheorem{defi}[thm]{Definition}
\newtheorem{lem}[thm]{Lemma}
\newtheorem{prop}[thm]{Proposition}
\newtheorem{cor}[thm]{Corollary}
\newtheorem{exam}[thm]{Example}
\newtheorem{conj}[thm]{Conjecture}
\newtheorem{rem}[thm]{Remark}
\newtheorem{ass}[thm]{Assumption}
\allowdisplaybreaks

\title{Berry-Esseen bounds for large-time asymptotics of one-dimensional diffusion processes via Malliavin-Stein method}

\author{Seiichiro Kusuoka and Yuichi Shiozawa}
\maketitle

\begin{abstract}
We consider solutions of stochastic differential equations 
which diverge to infinity as the time parameter goes to infinity.
If the coefficients converge as the spacial variable goes to infinity, 
then the solutions will get close to some Gaussian processes with positive drifts as the time parameter goes to infinity.
In this paper, we prove Berry-Esseen type bounds for the solutions in this setting.
In particular, we obtain bounds of the total variation distance 
between the law of the centered and scaled solutions of the stochastic differential equations 
and the standard normal distribution with an optimal rate of convergence in the time parameter.
In the proof we apply the Malliavin-Stein method to estimate the total variation distance.
\end{abstract}

\section{Introduction}

Let $l\in {\mathbb R}\cup \{ -\infty\}$ and consider the one-dimensional stochastic differential equation (SDE):
\begin{equation}\label{eq:SDE0}\left\{\begin{array}{l}
dX_t^x = \sigma (t,X_t^x) dB_t + b(t,X_t^x)dt, \\
X_0^x =x \in (l,\infty )
\end{array}\right.\end{equation}
where $\sigma , b \in C^{0,1}([0, \infty ) \times (l,\infty ))$.
Here, $C^{0,1}([0, \infty ) \times (l,\infty ))$ means the class of continuous functions on $[0, \infty ) \times (l,\infty )$ 
which are differentiable with respect to the second parameter (spatial parameter) and the derivative is continuous on $[0, \infty ) \times (l,\infty )$.
We assume that \eqref{eq:SDE0} has a pathwise-unique solution without explosions. 
When the limits $\sigma _\infty (t):= \lim _{y\rightarrow \infty} \sigma (t,y)$ and $b_\infty (t):= \lim _{y\rightarrow \infty} b(t,y)$ exist 
and $\lim _{t\rightarrow \infty} X_t^x = \infty$ almost surely, 
we expect that the asymptotic behaviour of $X_t^x$ for large $t$ is close to $\int _0^t \sigma _\infty (s) dB_s + \int _0^t b_\infty (s)ds$.
Such asymptotic behaviours are studied in the case that $X_t^x$ is the radial part of the Brownian motion on $d$-dimensional hyperbolic spaces. More precisely, some limit theorems like the law of large numbers, the central limit theorems and the Berry-Esseen type bound have been obtained 
(see \cite{Shi23} and the references therein).

In the present paper, we give suitable assumptions on the coefficients $\sigma$ and $b$, which guarantee the existence of the limits $\sigma _\infty (t)$ and $b_\infty (t)$ which are uniformly positive and bounded in $t$, and guarantee the divergence $\lim _{t\rightarrow \infty} X_t^x = \infty$ almost surely.
Under the assumptions we obtain the Berry-Esseen type bound:
\begin{equation}\label{eq:intro1}
d_{\rm  TV} \left( {\rm Law} \left(\frac{X_t^x -x - t \overline{b _\infty}(t)}{\overline{\sigma _\infty}(t) \sqrt{t}} \right) , {\rm Law}(Z)\right) \leq Ct^{-\frac{1}{2}}, \quad t\geq 1,
\end{equation}
where $d_{\rm  TV}$ is the total variation distance, $Z$ is a random variable with the standard normal law,
\[
\overline{\sigma _\infty}(t) := \sqrt{\frac{1}{t} \int _0^t \sigma _\infty (s)^2 ds} \quad \mbox{and} \quad \overline{b _\infty}(t) := \frac{1}{t} \int _0^t b_\infty (s) ds .
\]
We remark that \eqref{eq:intro1} immediately implies a version of the law of large numbers:
\[
\lim _{t\rightarrow \infty} \left( \frac{X_t^x}{t} - \overline{b _\infty}(t)\right) =0 \quad \mbox{almost surely},
\]
and a version of the central limit theorem:
\[
\mbox{the law of}\quad \frac{X_t^x -x - t \overline{b _\infty}(t)}{\overline{\sigma _\infty}(t) \sqrt{t}} \quad \mbox{converges to the standard normal distribution}.
\]
We also remark that our argument in the present paper is applicable to 
the radial part of the Brownian motion on hyperbolic spaces. 
In the case, the order $-\frac{1}{2}$ of $t$ in the upper bound of \eqref{eq:intro1} is optimal 
(see \cite{Shi23} and Section \ref{sec:relax} for details).

It is easy to obtain the law of large numbers and the central limit theorem above (see Theorems \ref{thm:main1} and \ref{thm:CLT}, respectively).
However, it is difficult in general  to obtain the Berry-Esseen type bound.
In particular, we need precise estimates for the optimal rate $t^{-\frac{1}{2}}$.
Indeed, we could not directly apply the ergodic theory to the process $\{X_t^x\}$, 
because its asymptotic behaviour is of a Gaussian process with a positive drift, and so the process $\{X_t^x\}$ is not stationary.
Moreover, our estimate is of the total variation distance (see \eqref{eq:intro1}), 
which is not easily treated by previous methods.

To obtain the bound of the total variation distance, we apply the Malliavin-Stein method and the comparison theorem of solutions of SDEs.
In the Malliavin-Stein method we have a complicated term (called Stein factor) in the upper bound, 
which includes the Malliavin derivative and the inverse of the Ornstein-Uhlenbeck operator (See Section \ref{sec:MalliavinStein} for details).
Because of this, it is difficult in general to simplify the bound.
However, in the present paper we are able to calculate the bound, 
because the asymptotic behaviour of $X_t^x$ in large $t$ is a Gaussian process with a nonrandom drift.
The error terms are estimated well via the comparison of the solution with a martingale with a constant positive drift.
Hence, the result in the present paper seems interesting, not only as a result of the asymptotic behaviour of the solutions of one-dimensional SDEs, 
but also as an application of the Malliavin-Stein method.

We first consider the case that $\sigma, b \in C_b^{0,1}([0,\infty )\times {\mathbb R})$ 
and some other suitable assumptions are fulfilled (See Section \ref{sec:BE}).
As mentioned above, we obtain \eqref{eq:intro1} by applying the Malliavin-Stein method, which is Theorem \ref{thm:BE1}, the first main theorem in the present paper. 
Hence, we need precise estimates of the terms, including Malliavin derivatives of the solution, appearing in the bound coming from the Malliavin-Stein method.
The main part of the proof of Theorem \ref{thm:BE1} is the proofs of the estimates, and the comparison theorem is applied several times.

We next consider the case that for some $l\in {\mathbb R}$, 
$\sigma \in C_b^{0,1}([0, \infty ) \times (l,\infty ))$, $b \in C^{0,1}([0, \infty ) \times (l,\infty ))$ 
and $\lim_{y\downarrow l}b(t,y)=\infty$ for any $t\ge 0$.
We impose some explicit conditions on the asymptotic bahaviours of $b(t,y)$ and $\partial b/\partial y(t,y)$ around $y=l$. 
The argument in this section works in the case that $X_t^x$ is the radial part of the Brownian motion on hyperbolic spaces (see Section \ref{sec:relax}).
In this case, we need estimates about the behaviour of the solution around $l$.
By using these and similar estimates to those in the case of bounded $b$, we obtain \eqref{eq:intro1}.
See Section \ref{sec:BEunbdd} for details.

Finally, we consider the case that some assumptions on $\sigma$ and $b$ are replaced by those on the assymptotic behaviours of $\sigma$ and $b$.
We apply the (time-inhomogeneous) strong Markov property of the solution  $\{X_t^x\}$ and the result in the case that $\sigma, b \in C_b^{0,1}([0,\infty )\times {\mathbb R})$.
In this case, we obtain a similar estimate to \eqref{eq:intro1} but the convergence rate $t^{-\frac{1}{2}}$ in the upper bound is replaced by $t^{-\frac{1}{2}}\log t$.
See Section \ref{sec:BEasypm} for details.

The organization of the present paper is as follows.
In Section \ref{sec:preliminary} we prepare estimates which will appear in the proof of main theorems.
More precisely, in Section \ref{sec:MalliavinStein} we recall the estimate of the total variation distance between a given law and the standard normal distribution in the Malliavin-Stein method.
In Section \ref{sec:Brownian} we give a known result and its implication on the exponential integrability of the functional of the Brownian motion with drift, 
and in Section \ref{sec:marti+drift} we extend the propositions in Section \ref{sec:Brownian} to the case of martingales with drifts.
In Sections \ref{sec:BE}, \ref{sec:BEunbdd} and \ref{sec:BEasypm} 
we consider the case that $\sigma ,b \in C_b^{0,1}([0,\infty )\times {\mathbb R})$, 
the case that $\sigma \in C_b^{0,1}([0, \infty ) \times (l,\infty ))$ and $b \in C^{0,1}([0, \infty ) \times (l,\infty ))$ diverges at $l\in {\mathbb R}$, 
and the case that $\sigma$ and $b$ have assumptions on their asymptotic behaviours, respectively. 
We obtain the main theorems in each section.
In Appendix we give an estimate of the total variation distance between a normal distribution and the scaled and translated one of the normal distribution. 

\subsection*{Notations}

Throughout this paper, all random variables and stochastic processes are defined on the probability space $(\Omega , {\cal F}, P)$, and we denote the expectation by $E[\cdot ]$.
For nonnegative real numbers $a$ and $b$, let $a\vee b := \max \{ a,b \}$ and $a\wedge b := \min \{ a,b\}$.

\section{Preliminary}\label{sec:preliminary}

\subsection{The Malliavin-Stein method}\label{sec:MalliavinStein}

Here, we recall the estimate of the distance between a given distribution and the standard normal distribution 
via the Malliavin-Stein method (see \cite{KT12, KT18, NP12} for details).

Let $A := \frac{d^2}{dx^2}-x\frac{d}{dx}$, $p(x) := \frac{1}{\sqrt{2\pi}} \exp \left( -\frac{x^2}{2}\right)$, 
and let $Z$ be a random variable with the standard normal distribution.
For $f\in C_b({\mathbb R})$, define
\begin{align*}
m_f &:= E[f(Z)] = \int _{\mathbb R} f(y) p(y) dy, \\
\widetilde{g_f}(x) &:= \frac{1}{p(x)} \int _{-\infty} ^x (f(y)-m_f) p(y) dy ,\\
g_f(x) &:= \int _0^x \widetilde{g_f}(y) dy.
\end{align*}
Note that in view of \cite[Proposition 2]{KT12} (or \cite[Theorem 3.3.1 and Exercise 3.3.4]{NP12}), $\widetilde{g_f} \in W^{1,\infty}({\mathbb R})$ and 
\begin{equation}\label{eq:estgf}
\| \widetilde{g_f} \| _{W^{1,\infty }} \leq C\| f\| _{L^\infty}.
\end{equation}
Then, we have
\[
A g_f (x) = \frac{d^2 g_f}{dx^2}(x) - x \frac{dg_f}{dx}(x) = \frac{d \widetilde{g_f}}{dx}(x)-x\widetilde{g_f}(x) = f(x) -m_f .
\]
Hence, for any random variable $X$, it holds that
\begin{equation}\label{eq:Stein}
E[f(X)] - E[f(Z)] = E\left[ \frac{d\widetilde{g_f}}{dx} (X) - X \widetilde{g_f}(X) \right] .
\end{equation}

Next we apply the Malliavin calculus.
Let $(B,H,\mu )$ be the abstract Wiener space, let $D$ be the Malliavin derivative, and for the unity of notations we let $\Omega =B$ and $P = \mu$.
Let $L:= -D^*D$ be the Ornstein-Uhlenbeck operator, and let ${\cal D}^{s,2}(H^{\otimes n})$ be the Sobolev space with the associated norm 
\[
\| X\| _{{\cal D}^{s,2}(H^{\otimes n})} := E\left[ \left\| (1-L)^{\frac{s}{2}} X \right\| _{H^{\otimes n}}^2 \right] ^{1/2}.
\]
Then, if $X\in {\cal D}^{1,2}$, we have
\begin{align*}
E\left[ X \widetilde{g_f}(X) \right] &= E\left[ D^* D (-L)^{-1} (X-E[X]) \widetilde{g_f}(X) \right] + E[X] E[\widetilde{g_f}(X) ]\\
&= E\left[ \frac{d\widetilde{g_f}}{dx} (X) \left\langle D (-L)^{-1} (X-E[X]),  DX \right\rangle _H \right] + E[X] E[\widetilde{g_f}(X) ].
\end{align*}
This equality and \eqref{eq:Stein} yield
\[
E[f(X)] - E[f(Z)] = E\left[ \frac{d\widetilde{g_f}}{dx} (X) \left( 1- \left\langle D (-L)^{-1} (X-E[X]),  DX \right\rangle _H \right) \right] - E[X] E[\widetilde{g_f}(X) ].
\]
Thus, together with \eqref{eq:estgf} we obtain
\begin{equation}\label{eq:MS}\begin{array}{l}
\displaystyle \left| E[f(X)] - E[f(Z)] \right| \\
\displaystyle \leq C \| f\| _{L^\infty} \left( E\left[ \left|  1- \left\langle D (-L)^{-1} (X-E[X]),  DX \right\rangle _H  \right| \right] + |E[X]| \right).
\end{array}\end{equation}
This inequality is obtained in \cite[Theorem 1]{KT12}.
As in \cite[Section 3]{KT12}, \eqref{eq:MS} enables us to estimate some distances between the laws of $X$ and $Z$.

Recall that the total variation distance between the laws of $X$ and $Z$ is given by
\[
d_{\rm  TV}({\rm Law}(X), {\rm Law}(Z)) := \frac{1}{2}\sup _{A\in {\mathcal B}({\mathbb R})} \left| E[{\bf 1}_{A}(X)] - E[{\bf 1}_{A}(Z)] \right| .
\]
Note that
\[
\sup _{A\in {\mathcal B}({\mathbb R})} \left| E[{\bf 1}_{A}(X)] - E[{\bf 1}_{A}(Z)] \right| = \sup_{f\in C_b({\mathbb R}); \| f\| _\infty \leq 1} \left| E[f(X)] - E[f(Z)] \right|
\]
(see \cite[Section 2.12(iv)]{Bo07}).
From this equality and \eqref{eq:MS}, we obtain
\begin{equation}\label{eq:MS2}
d_{\rm TV}({\rm Law}(X), {\rm Law}(Z))
\leq C \left( E\left[ \left| 1- \left\langle D (-L)^{-1} (X-E[X]),  DX \right\rangle _H \right| \right] + |E[X]| \right) .
\end{equation}
We use this estimate in the proofs of the main theorems.

\subsection{Exponential integrability of Brownian functionals}\label{sec:Brownian}

Here, we provide a sufficient condition for the exponential integrability of Brownian functionals.
For $x, a\in {\mathbb R}$ and a Brownian motion $B_t$, denote the stochastic process $x+B_t+at$ by $B_t^{x,a}$.

\begin{prop}\label{prop:integrable} {\rm (\cite[p.~73, {\bf 34}]{BS02}, \cite{SY05})}
Let $a>0$ and let $f$ be a bounded Borel function on ${\mathbb R}$ such that $\int_0^{\infty}f(x)dx<\infty$, 
and let $M=\sup_{x\in {\mathbb R}}f(x)$. 
Then for any $c \in \left[ 0, \frac{a^2}{2M}\right)$, 
\[
E\left[\exp\left(c\int_0^{\infty}f(B_t^{x,a})dt\right)\right]<\infty, \quad x\in {\mathbb R}.
\]
In particular, for any $p\in [0,\infty )$
\[
E\left[ \left| \int_0^{\infty}f(B_t^{x,a})dt \right| ^p \right]<\infty, \quad x\in {\mathbb R}.
\]
\end{prop}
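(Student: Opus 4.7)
The polynomial assertion follows from the exponential one by the elementary bound $|y|^p \le p!\,\lambda^{-p}(e^{\lambda y} + e^{-\lambda y})$ applied with $\lambda \in (0, a^2/(2M))$; so the substance is the exponential estimate.

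The main tool is a Feynman--Kac super-solution. Pick $\mu \in (0, a]$ with $\tfrac{1}{2}\mu^2 - a\mu + cM < 0$; the extreme case $\mu = a$ reduces this to $cM < a^2/2$, the stated hypothesis. Set $h(y) := e^{-\mu y}$; then $\tfrac{1}{2}h'' + a h' + cf\, h = h\bigl(\tfrac{1}{2}\mu^2 - a\mu + cf\bigr) \le 0$. It\^o's formula shows that $N_t := h(B_t^{x,a})\exp\bigl(c \int_0^t f(B_s^{x,a})\,ds\bigr)$ is a positive local super-martingale, hence a super-martingale after localisation at $|B_t^{x,a}| = N$ and Fatou. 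Optional stopping at the hitting time $\tau_K := \inf\{t : B_t^{x,a} = K\}$---a.s.\ finite for $K \ge x$ since the drift is positive---yields
\[
E\!\left[\exp\!\left(c\int_0^{\tau_K} f(B_s^{x,a})\,ds\right)\right] \le e^{\mu(K - x)}.
\]
Combined with the strong Markov property at $\tau_K$, which writes the post-$\tau_K$ integral as an independent copy of $\int_0^\infty f(B_s^{K,a})\,ds$, this gives the reduction
\[
\phi(x) := E\!\left[\exp\!\left(c\int_0^\infty f(B_s^{x,a})\,ds\right)\right] \le e^{\mu(K-x)}\,\phi(K),
\]
so the claim reduces to the finiteness of $\phi(K)$ for some large $K$.

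For this last step I would use the explicit Green function $G(y, z) = a^{-1}\min\{1, e^{-2a(y - z)}\}$ of $B^{y,a}$, obtained by direct integration of the drift-Brownian transition density. The mean $E^K[\int_0^\infty f(B_s^{K,a})\,ds] = \int G(K, z) f(z)\,dz$ is bounded uniformly for starting points $\ge K$ by $M/(2a^2) + a^{-1}\int_K^\infty f(z)\,dz$, and under the hypothesis $c < a^2/(2M)$ combined with $\int_0^\infty f < \infty$ one can make $c$ times this quantity strictly less than $1$ by taking $K$ large. A localised Khas'minskii iteration then gives $\phi(K) < \infty$.

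The main obstacle is producing the \emph{sharp} threshold $a^2/(2M)$. A global Khas'minskii argument fails because $\sup_{y \in \mathbb R} E^y[\int_0^\infty f(B_s^{y,a})\,ds] = \infty$ (the $(-\infty, 0]$ stretch contributes $O(M|y|/a)$ as $y\to-\infty$). The super-solution substitutes for Khas'minskii on the dangerous region, and the sharp constant emerges by choosing $\mu = a$, which maximises $a\mu - \tfrac{1}{2}\mu^2$ at $a^2/2$---exactly the threshold that the hypothesis $cM < a^2/2$ beats. The route in \cite{BS02, SY05} obtains the same threshold by computing the Laplace transform of $\int_0^\infty f(B_t^{x,a})\,dt$ directly from the associated Sturm--Liouville equation $\tfrac{1}{2}\phi'' + a\phi' + cf\phi = 0$.
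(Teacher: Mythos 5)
The paper does not prove Proposition~\ref{prop:integrable}; it simply quotes it from the references \cite{BS02,SY05}, so there is no proof to compare against and your argument stands or falls on its own. The constructive part of your proposal is sound: $h(y)=e^{-\mu y}$ with $\mu=a$ is a super-solution of $\tfrac12 h''+ah'+cfh\le 0$ under $cM<a^2/2$, the process $N_t=h(B_t^{x,a})\exp\bigl(c\int_0^t f(B_s^{x,a})\,ds\bigr)$ is a nonnegative supermartingale, optional stopping gives $E[\exp(c\int_0^{\tau_K}f)]\le e^{\mu(K-x)}$, and the strong Markov property yields $\phi(x)\le e^{\mu(K-x)}\phi(K)$. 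The Green function formula $G(y,z)=a^{-1}\min\{1,e^{-2a(y-z)}\}$ and the consequent bound $E^y[\int_0^\infty f]\le M/(2a^2)+a^{-1}\int_K^\infty f$ for $y\ge K$ are also correct.

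The gap is in the final step. The Khas'minskii series $\sum_n E^K\bigl[\int_{0<s_1<\cdots<s_n}\prod_i cf(B_{s_i})\,ds\bigr]$ requires, when the Markov property is applied at the innermost variable, the bound $\sup_y E^y[\int_0^\infty cf]<1$ over \emph{every} $y$ the path from $K$ can occupy, and this path visits all of $\mathbb R$ (in particular $(-\infty,0]$, where the occupation integral grows like $cM|y|/a$). Your bound is uniform only over $y\ge K$, and the strong-Markov reduction from $x$ to $K$ changes the starting point but not the visited set, so it does not ``substitute for Khas'minskii on the dangerous region'' as claimed — the dangerous region is still entered from $K$ with positive probability, and no ``localised Khas'minskii iteration'' is actually exhibited. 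A workable repair: split $f=f\mathbf 1_{(-\infty,K]}+f\mathbf 1_{(K,\infty)}$ and apply H\"older with exponents $(p,q)$, $p>1$ close enough to $1$ that $pcM<a^2/2$ still holds; treat $f\mathbf 1_{(K,\infty)}$ by Khas'minskii, since $\sup_y E^y[\int f\mathbf 1_{(K,\infty)}]\le a^{-1}\int_K^\infty|f|$ is uniformly small for $K$ large; and treat $f\mathbf 1_{(-\infty,K]}$ with a \emph{bounded} super-solution such as $h(y)=e^{-a(y-K)}$ for $y<K$ glued $C^1$ at $K$ to $h(y)=\tfrac12(1+e^{-2a(y-K)})$ for $y\ge K$ — one checks $\tfrac12h''+ah'+pc\,f\mathbf 1_{(-\infty,K]}\,h\le 0$ everywhere and $h(\infty)=\tfrac12>0$, so Fatou applied to the supermartingale $N_t$ as $t\to\infty$ gives $E[\exp(pc\int_0^\infty f\mathbf 1_{(-\infty,K]})]\le 2h(x)<\infty$. (With your original $h=e^{-\mu y}$, $h(\infty)=0$, which is why the supermartingale convergence alone yields only the vacuous $0\le h(x)$.) Alternatively, one can of course fall back to the Sturm--Liouville route you mention as the method of \cite{BS02,SY05}.
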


For later use, we prepare an implication of Proposition~\ref{prop:integrable}, as follows.

\begin{prop}\label{prop:integrable2}
Let $a>0$ and let $f$ be a nonnegative non-increasing function on ${\mathbb R}$ such that $\int_0^{\infty}f(x)dx<\infty$, 
and let $M=\sup_{x\in {\mathbb R}}f(x)$. 
\begin{enumerate}
\item For any $c<\frac{a^2}{2M}$ and $x_0\in {\mathbb R}$, 
\[
\sup _{x\in [x_0,\infty )} E \left[\exp\left(c\int_0^{\infty}f(B_t^{x,a})dt\right)\right]<\infty .
\]
\item For any $p\in [0,\infty )$ and $x_0\in {\mathbb R}$, 
\[
\sup _{x\in [x_0,\infty )} E \left[ \left( \int_0^{\infty}f(B_t^{x,a})dt \right) ^p \right]<\infty .
\]
\end{enumerate}
\end{prop}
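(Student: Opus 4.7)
The proposal is to exploit the monotonicity of $f$ together with the spatial shift structure of $B_t^{x,a}=x+B_t+at$ to reduce the uniform statement to the pointwise statement already provided by Proposition~\ref{prop:integrable}.

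The key observation is that for $x \geq x_0$, we have $x + B_t + at \geq x_0 + B_t + at$ pathwise, so since $f$ is non-increasing,
\[
f(B_t^{x,a}) = f(x + B_t + at) \leq f(x_0 + B_t + at) = f(B_t^{x_0,a})
\]
for every $t \geq 0$ almost surely. Integrating over $t \in [0,\infty)$ (both sides are nonnegative), we obtain the pathwise bound
\[
\int_0^\infty f(B_t^{x,a})\,dt \leq \int_0^\infty f(B_t^{x_0,a})\,dt, \quad x \in [x_0,\infty).
\]

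For (i), since the exponential is monotone, the above bound gives $\exp\bigl(c\int_0^\infty f(B_t^{x,a})dt\bigr) \leq \exp\bigl(c\int_0^\infty f(B_t^{x_0,a})dt\bigr)$ for any $c \geq 0$. Taking expectations and then the supremum over $x \geq x_0$,
\[
\sup_{x\in[x_0,\infty)} E\left[\exp\left(c\int_0^\infty f(B_t^{x,a})dt\right)\right] \leq E\left[\exp\left(c\int_0^\infty f(B_t^{x_0,a})dt\right)\right],
\]
and the right-hand side is finite by Proposition~\ref{prop:integrable} whenever $c < a^2/(2M)$. For (ii), since the map $u \mapsto u^p$ is non-decreasing on $[0,\infty)$ for $p \geq 0$, the same pathwise bound gives
\[
\sup_{x\in[x_0,\infty)} E\left[\left(\int_0^\infty f(B_t^{x,a})dt\right)^p\right] \leq E\left[\left(\int_0^\infty f(B_t^{x_0,a})dt\right)^p\right] < \infty,
\]
again by Proposition~\ref{prop:integrable}.

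There is no real obstacle here; the only point worth checking is that the hypotheses of Proposition~\ref{prop:integrable} transfer to the anchor point $x_0$. This is automatic since $f$ is Borel measurable, bounded (by $M$), and has $\int_0^\infty f\,dx < \infty$ as assumed, so Proposition~\ref{prop:integrable} applies with the same constants $a$ and $M$ at the single point $x = x_0$. The monotonicity of $f$ is therefore the only additional ingredient needed beyond the previous proposition.
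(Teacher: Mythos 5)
Your proof is correct and uses the same key idea as the paper: the pathwise bound $f(B_t^{x,a}) \leq f(B_t^{x_0,a})$ for $x \geq x_0$, coming from the monotonicity of $f$, reduces the uniform statement to the single point $x_0$ where Proposition~\ref{prop:integrable} applies. The only cosmetic difference is in (ii): you invoke the polynomial-moment half of Proposition~\ref{prop:integrable} directly, whereas the paper deduces (ii) from its own (i) via the elementary inequality $y^p \leq L e^{\varepsilon y}$; both routes are equally valid and short.
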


\begin{proof}
It is sufficient to consider the case that $c>0$.
Since $f$ is non-increasing, for $x\geq x_0$
\begin{align*}
E \left[\exp\left(c\int_0^{\infty}f(B_s^{x,a})dt\right)\right] &= E_{x_0} \left[\exp\left(c\int_0^{\infty}f(x-x_0 + B_t^{x_0,a})dt\right)\right] \\
&\leq E \left[\exp\left(c\int_0^{\infty}f(B_t^{x_0,a})dt\right)\right] .
\end{align*}
Hence, (i) follows from Proposition~\ref{prop:integrable}.
From (i) we have (ii), because for any $\varepsilon >0$ there exists $L \in [1,\infty )$ such that
\[
y^p \leq L e^{\varepsilon y} , \quad y\in [0,\infty ) .
\]
\end{proof}

\subsection{Estimates of martingales with constant drifts}\label{sec:marti+drift}

Here, we consider the properties of the pathwise-unique solution $Y_t^x$ of the SDE 
\begin{equation}\label{eq:SDEY}\left\{\begin{array}{l}
dY_t^x = \sigma (t,Y_t^x) dB_t + b_1 dt, \\
Y_0^x =x \in {\mathbb R}
\end{array}\right.\end{equation}
where $b_1\in {\mathbb R}$ and $\sigma \in C_b^{0,1}([0,\infty )\times {\mathbb R})$ satisfies
\[
\sigma _1 \leq \sigma (t,y) \leq \sigma _2, \quad (t,y) \in [0,\infty )\times {\mathbb R}
\]
with some constants $\sigma _1, \sigma _2 \in (0,\infty )$.
Let
\begin{equation}\label{eq:mtg-y}
M_t^x :=\int_0^t\sigma(s,Y_s^x)dB_s, \quad t\ge 0.
\end{equation}
This is a square-integrable martingale with respect to the filtration generated by $B_t$, and it holds that 
\[
\langle M^x\rangle_t= \int_0^t\sigma(s,Y_s^x)^2ds, \quad t\ge 0,
\]
where $\langle M^x\rangle_t$ is the quadratic variation of $M^x_t$.
Then, \eqref{eq:SDEY} implies
\[
Y_t^x=x+M_t^x+b_1 t, \quad t\ge 0.
\]
Since 
\begin{equation}\label{eq:quad}
\sigma_1^2t\le \langle M^x\rangle_t \le \sigma_2^2 t, \quad t\ge 0,
\end{equation}
there exists a one-dimensional Brownian motion $\{\tilde{B}_t\}_{t\ge 0}$ such that 
\begin{equation}\label{eq:MB}
M_t^x=\tilde{B}_{\langle M^x\rangle_t} ,\quad t\ge 0
\end{equation} 
(see, e.g., \cite[Theorem~4.6 (p.~174)]{KS98}).
This and \eqref{eq:quad} imply that 
\begin{equation}\label{eq:lli-1}
\limsup_{t\rightarrow\infty}\frac{|M_t^x|}{t} \leq \limsup_{t\rightarrow\infty} \frac{\sigma _2^2|\tilde{B}_{\langle M^x\rangle_t}|}{\langle M^x\rangle_t} =0, \quad \mbox{almost surely}.
\end{equation}
Hence we have
\begin{equation}\label{eq:divergence-0}
\lim_{t\rightarrow\infty}\frac{Y_t^x}{t}=b_1, \quad \mbox{almost surely.}
\end{equation}

As extensions of Propositions \ref{prop:integrable} and \ref{prop:integrable2} we prepare the following.

\begin{prop}\label{prop:integrable-mtg} 
Let $a>0$.
\begin{enumerate}
\item[{\rm (i)}] 
Let $f$ be a bounded nonnegative Borel function on ${\mathbb R}$ such that 
$\int_0^{\infty}f(x)dx<\infty$, and let $M=\sup_{x\in {\mathbb R}}f(x)$. 
Then for any $c<\frac{\sigma_1^2 a^2}{2M}$,
\[
E\left[\exp\left(c\int_0^{\infty}f(x+M_t^x+a\langle M^x\rangle_t)dt\right)\right]<\infty, 
\quad x\in {\mathbb R}.
\]
Moreover, if $f$ is nonincreasing, then for any $x_0\in {\mathbb R}$,
\[
\sup_{x\in [x_0,\infty)}E\left[\exp\left(c\int_0^{\infty}f(x+M_t^x+a \langle M^x\rangle_t)dt\right)\right]<\infty.
\]
\item[{\rm (ii)}] 
Let $f$ be a bounded Borel function on ${\mathbb R}$ such that $\int_0^{\infty}|f(x)|dx<\infty$. 
Then for any $p\in [0,\infty)$, 
\[
\sup_{t\ge 0}E\left[\left|\int_0^t f(x+M_t^x+a \langle M^x\rangle_t)dt\right|^p\right]<\infty, \quad x\in {\mathbb R}.
\]
Moreover, if $f$ is nonnegative and nonincreasing, then for any $x_0\in {\mathbb R}$,
\[
\sup_{x\in [x_0,\infty)}E\left[\left(\int_0^{\infty} f(x+M_t^x+a \langle M^x\rangle_t)dt\right)^p\right]<\infty.
\]
\end{enumerate}
\end{prop}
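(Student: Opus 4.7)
The plan is to reduce both assertions to the Brownian counterparts in Propositions~\ref{prop:integrable} and \ref{prop:integrable2} via the time-change representation \eqref{eq:MB}. The key computation is the change of variable $u=\langle M^x\rangle_t$ inside the integrals on the left-hand sides. Since $du=\sigma(t,Y_t^x)^2\,dt$ and $\sigma\ge \sigma_1>0$, we have $dt\le \sigma_1^{-2}\,du$, while \eqref{eq:quad} also guarantees $\langle M^x\rangle_t\to\infty$. Combined with $M_t^x=\tilde B_{\langle M^x\rangle_t}$ this yields, for every nonnegative Borel $f$,
\[
\int_0^\infty f\bigl(x+M_t^x+a\langle M^x\rangle_t\bigr)\,dt \;\le\; \frac{1}{\sigma_1^2}\int_0^\infty f\bigl(\tilde B_u^{x,a}\bigr)\,du.
\]

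For part (i), I would apply this estimate inside the exponential and take expectations: when $c<\sigma_1^2 a^2/(2M)$, the constant $c\sigma_1^{-2}$ lies in $[0,a^2/(2M))$, so Proposition~\ref{prop:integrable} gives finiteness of the resulting right-hand side. Under the additional nonincreasing hypothesis, the supremum-in-$x$ part of Proposition~\ref{prop:integrable2}(i) produces the uniformity over $x\in[x_0,\infty)$ by exactly the same argument.

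For part (ii), I would first dominate $\bigl|\int_0^t f(x+M_s^x+a\langle M^x\rangle_s)\,ds\bigr|\le \int_0^\infty |f(x+M_s^x+a\langle M^x\rangle_s)|\,ds$, which removes the supremum over $t$. Applying part (i) to the nonnegative function $|f|$ (still bounded and integrable), one chooses some $c_0>0$ with
\[
E\Bigl[\exp\Bigl(c_0\int_0^\infty |f(x+M_s^x+a\langle M^x\rangle_s)|\,ds\Bigr)\Bigr]<\infty,
\]
and finite $p$-th moments then follow from the elementary bound $y^p\le L_{p,c_0}\,e^{c_0 y}$ for $y\ge 0$, exactly as in the proof of Proposition~\ref{prop:integrable2}. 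The uniform bound over $x\in[x_0,\infty)$ for nonnegative nonincreasing $f$ is obtained analogously using the uniform part of (i). The only subtlety worth noting is that the step $dt\le \sigma_1^{-2}\,du$ preserves the integral inequality only for nonnegative integrands, which is precisely why the signed case in (ii) has to be linearized through $|f|$ rather than compared directly; no other obstacle is anticipated.
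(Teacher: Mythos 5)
Your argument is correct and follows essentially the same route as the paper: apply the Dambis--Dubins--Schwarz representation $M_t^x=\tilde B_{\langle M^x\rangle_t}$, change variables $u=\langle M^x\rangle_t$ so that the Jacobian factor $\sigma(\cdot,\cdot)^{-2}$ appears, and use $\sigma\ge\sigma_1$ to dominate by $\sigma_1^{-2}\int_0^\infty f(x+\tilde B_u+au)\,du$, then invoke Propositions~\ref{prop:integrable} and~\ref{prop:integrable2}. The only internal difference is that you derive part (ii) by first passing through the exponential bound of part (i) applied to $|f|$ and then using $y^p\le L\,e^{c_0 y}$, whereas the paper cites Propositions~\ref{prop:integrable} and~\ref{prop:integrable2}(ii) directly after the same time change; these are equivalent, and your observation that the Jacobian comparison $dt\le\sigma_1^{-2}\,du$ is only monotone for nonnegative integrands is a correct and worthwhile remark that the paper leaves implicit.
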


\begin{proof}
Let us prove (i). We may assume that $c>0$. 
From \eqref{eq:MB}, it follows that
\begin{align*}
&\exp\left(c\int_0^{\infty}f(x+M_t^x+a\langle M^x\rangle_t)dt\right)\\
&=\exp\left(c\int_0^{\infty}f(x+\tilde{B}_{\langle M^x\rangle_t}+a\langle M^x\rangle_t)dt\right)\\
&=\exp\left(c\int_0^{\infty}f(x+\tilde{B}_u+a u) \, \sigma\left(\langle M^x\rangle_u^{-1}, Y_{\langle M^x\rangle_u^{-1}}^x\right)^{-2} du\right)\\
&\le \exp\left(\frac{c}{\sigma_1^2}\int_0^{\infty}f(x+\tilde{B}_u+a u)du\right).
\end{align*}
In the second equality above, we used the change of variables formula with $u=\langle M^x\rangle_t$. 
From this inequality we have 
\[
E\left[\exp\left(c\int_0^{\infty}f(x+M_t^x+a\langle M^x\rangle_t)dt\right)\right]
\le E\left[ \exp\left(\frac{c}{\sigma_1^2}\int_0^{\infty}f(x+\tilde{B}_u+a u)du\right)\right].
\]
In view of Proposition~\ref{prop:integrable}, the right-hand side is finite 
for any $c\in [0,\frac{\sigma_1^2 a^2}{2M})$.
Proposition~\ref{prop:integrable2}(i) also implies the desired uniform bound. 

We omit the proof of (ii), because we obtain the assertion similarly by applying Propositions~\ref{prop:integrable} and \ref{prop:integrable2}(ii). 
\end{proof}

By applying Proposition~\ref{prop:integrable-mtg} we prove some estimates of $Y_t^x$ for later use.

\begin{cor}\label{cor:intY} 
\begin{enumerate}
\item[{\rm (i)}] 
Let $f$ be a bounded nonnegative and nonincreasing function on ${\mathbb R}$ such that 
$\int_0^{\infty}f(x)dx<\infty$, and let $M=\sup_{x\in {\mathbb R}}f(x)$. 
Then for any $c\in [0,\frac{\sigma_1^2 b_1 ^2}{2M})$ and $x_0\in {\mathbb R}$,
\[
\sup_{x\in [x_0,\infty)}E\left[\exp\left(c\int_0^{\infty}f(Y_t^x)dt\right)\right]<\infty.
\]
\item[{\rm (ii)}] 
Let $f$ be a bounded nonnegative and nonincreasing function on ${\mathbb R}$ such that $\int_0^{\infty} f(x) dx<\infty$. 
Then, for any $p \in [0,\infty )$ and $x_0\in {\mathbb R}$,
\[
\sup_{x\in [x_0,\infty)}E\left[\left(\int_0^{\infty} f(Y_t^x)dt\right)^p\right]<\infty.
\]
\end{enumerate}
\end{cor}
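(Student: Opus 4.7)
The plan is to reduce both assertions to Proposition~\ref{prop:integrable-mtg} by using the explicit decomposition $Y_t^x = x + M_t^x + b_1 t$ (immediate from integrating \eqref{eq:SDEY}) together with the monotonicity of $f$. The essence is to dominate $Y_t^x$ from below by a process of the form $x + M_t^x + a\langle M^x\rangle_t$ for a suitable constant $a>0$, so that Proposition~\ref{prop:integrable-mtg} applies directly with that $a$.

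Concretely, the stated threshold $\sigma_1^2 b_1^2/(2M)$ is nontrivial only when $b_1>0$, and otherwise $Y_t^x$ does not drift to $+\infty$, so I would take $b_1>0$. From \eqref{eq:quad} and the bound $\sigma\le \sigma_2$ one has $\langle M^x\rangle_t \le \sigma_2^2 t$, i.e.\ $t \ge \sigma_2^{-2}\langle M^x\rangle_t$, and hence
\[
Y_t^x \;\ge\; x + M_t^x + \frac{b_1}{\sigma_2^2}\langle M^x\rangle_t,\qquad t\ge 0.
\]
Because $f$ is nonnegative and nonincreasing, this inequality passes through $f$ in the opposite direction:
\[
f(Y_t^x) \;\le\; f\bigl(x + M_t^x + a\langle M^x\rangle_t\bigr)\quad\text{with}\quad a := b_1/\sigma_2^2 > 0.
\]
Integrating in $t$ from $0$ to $\infty$ gives a pathwise domination of $\int_0^\infty f(Y_t^x)\,dt$ by $\int_0^\infty f(x + M_t^x + a\langle M^x\rangle_t)\,dt$.

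Part~(i) then follows by exponentiating and applying the uniform-in-$x$ statement of Proposition~\ref{prop:integrable-mtg}(i) with this $a$; the supremum over $x \in [x_0,\infty)$ is inherited directly. Part~(ii) follows in exactly the same way from Proposition~\ref{prop:integrable-mtg}(ii), using monotonicity of $y\mapsto y^p$ on $[0,\infty)$.

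The main obstacle I foresee is matching the precise range of $c$ claimed in~(i). The reduction above yields exponential integrability for $c < \sigma_1^2 a^2/(2M) = \sigma_1^2 b_1^2/(2M\sigma_2^4)$, which agrees with the stated threshold $\sigma_1^2 b_1^2/(2M)$ only when $\sigma_1=\sigma_2$. Recovering the sharper constant in general would require avoiding the crude pointwise bound $\sigma^{-2}\le \sigma_1^{-2}$ used inside the proof of Proposition~\ref{prop:integrable-mtg}(i), and instead performing the time change from $t$ to Brownian time while keeping the Jacobian $\sigma^{-2}$ coupled directly to the drift $b_1 t$, so as to exploit its linearity in the original time variable rather than in $\langle M^x\rangle_t$.
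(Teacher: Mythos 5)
Your reduction is exactly the paper's intended argument: the paper's entire proof of this corollary reads ``The assertions follow immediately from Proposition~\ref{prop:integrable-mtg} and \eqref{eq:quad},'' which is precisely the step you carry out. Using $b_1 t \ge (b_1/\sigma_2^2)\langle M^x\rangle_t$ and the monotonicity of $f$ so that $f(Y_t^x)\le f\bigl(x+M_t^x+(b_1/\sigma_2^2)\langle M^x\rangle_t\bigr)$ pathwise, then invoking Proposition~\ref{prop:integrable-mtg} with $a=b_1/\sigma_2^2$, is the right and essentially unique way to pass from the martingale proposition to the corollary, and it handles both the exponential moment and the polynomial moment uniformly in $x\in[x_0,\infty)$.

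You are also right to flag the constant. The reduction gives the threshold $\sigma_1^2 a^2/(2M)$ with $a=b_1/\sigma_2^2$, i.e.\ $\sigma_1^2 b_1^2/(2M\sigma_2^4)$, and there is no obvious way to do better. Two small corrections to your discussion, though. First, your claim that this agrees with the stated $\sigma_1^2 b_1^2/(2M)$ ``only when $\sigma_1=\sigma_2$'' is not accurate: the two expressions coincide if and only if $\sigma_2=1$ (in particular they still disagree when $\sigma_1=\sigma_2\ne 1$). Second, your closing remark---that the stated larger constant might be recoverable by a more careful time change that keeps the Jacobian $\sigma^{-2}$ coupled with the linear drift $b_1 t$---is chasing something that is not there. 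Specialize to $\sigma\equiv\sigma_0$ constant and $f={\bf 1}_{(-\infty,L]}$, so $M=1$. Brownian scaling reduces $\int_0^\infty f(Y_t^x)\,dt$ to $\sigma_0^{-2}$ times the total time a standard Brownian motion with drift $b_1/\sigma_0^2$ spends below a level, whose Laplace transform is finite precisely for $c\sigma_0^{-2}<(b_1/\sigma_0^2)^2/2$, that is $c<b_1^2/(2\sigma_0^2)$. This matches $\sigma_1^2 b_1^2/(2M\sigma_2^4)$ and is strictly smaller than the stated $\sigma_1^2 b_1^2/(2M)=\sigma_0^2 b_1^2/2$ whenever $\sigma_0>1$. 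So the threshold written in the corollary appears to be a misprint (the factor $\sigma_2^4$ in the denominator is missing), and your version of the constant is the correct one; no cleverer coupling of the Jacobian to the drift can produce the printed threshold. In short: same method as the paper, correct proof of (ii), correct proof of (i) with the corrected constant $\sigma_1^2 b_1^2/(2M\sigma_2^4)$, and the last paragraph of your write-up should be replaced by the observation that the printed constant is a typographical error rather than a target to be reached.
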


\begin{proof}
The assertions follow immediately from Proposition \ref{prop:integrable-mtg} and \eqref{eq:quad}.
\end{proof}

\begin{prop}\label{prop:asymY}
\begin{enumerate}

\item For $L\in {\mathbb R}$ there exists a constant $C>0$ depending on $\sigma _1$, $\sigma _2$, $b_1$, $x_0$ and $L$ such that 
\[
\sup _{x\in [x_0,\infty )} P\left( \inf _{s\in [t,\infty )} Y_s^x \leq L\right) \leq C\exp \left( -\frac{b_1^2\sigma _1^2}{16 \sigma _2^4}t \right) , \quad t\geq 1.
\]

\item For $\gamma >0$ there exists a constant $C>0$ depending on $x_0$, $b_1$ and $\sigma _2$ such that 
\[
\sup _{x\in [x_0,\infty )} E\left[ (Y_t^x)^{-\gamma} {\bf 1}_{[1,\infty)} (Y_t^x)\right] \leq Ct^{-\gamma} , \quad t\geq 1.
\]
\end{enumerate}
\end{prop}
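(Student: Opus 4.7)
The plan for (i) is to reduce the problem to a classical estimate for Brownian motion with positive drift by combining the Dambis--Dubins--Schwarz representation \eqref{eq:MB} with the two-sided control \eqref{eq:quad} on $\langle M^x\rangle_s$. Set $a:=b_1/\sigma_2^2$, $T_0:=\sigma_1^2 t$, and $K:=L-x_0$. For $s\geq t$, the bounds $\langle M^x\rangle_s\geq\sigma_1^2 t$ and $s\geq\langle M^x\rangle_s/\sigma_2^2$ turn the inequality $Y_s^x\leq L$ into $\tilde{B}_{\langle M^x\rangle_s}+a\langle M^x\rangle_s\leq K$, uniformly in $x\in[x_0,\infty)$. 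Setting $u=\langle M^x\rangle_s$, which sweeps $[T_0,\infty)$ as $s$ ranges over $[t,\infty)$, yields the inclusion
\[
\bigl\{\inf_{s\in[t,\infty)}Y_s^x\leq L\bigr\}\subseteq\bigl\{\inf_{u\geq T_0}(\tilde{B}_u+au)\leq K\bigr\},
\]
and the event on the right is governed solely by the Wiener law of $\tilde{B}$, so its probability is the same for every $x\geq x_0$.

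To estimate the latter, I condition on $\tilde{B}_{T_0}$ and invoke the Markov property: the process $W_v:=\tilde{B}_{T_0+v}-\tilde{B}_{T_0}$ is a Brownian motion independent of $\tilde{B}_{T_0}$, and $-\inf_{v\geq 0}(W_v+av)$ has the exponential distribution of parameter $2a$. Hence
\[
P\bigl(\inf_{u\geq T_0}(\tilde{B}_u+au)\leq K\bigr)=E\bigl[1\wedge\exp(-2a((\tilde{B}_{T_0}+aT_0-K)\vee 0))\bigr].
\]
Splitting on the event $\{\tilde{B}_{T_0}+aT_0\geq K+aT_0/2\}$ yields $\exp(-a^2 T_0)$ on that event and, on its complement, the Gaussian tail $P(\tilde{B}_{T_0}\leq K-aT_0/2)$. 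For $T_0$ beyond an explicit threshold depending on $K$ and $a$, one has $aT_0/2-K\geq aT_0/(2\sqrt{2})$, so the Gaussian tail is at most $\exp(-a^2 T_0/16)$. Since $a^2 T_0=b_1^2\sigma_1^2 t/\sigma_2^4$, this delivers the exponent $b_1^2\sigma_1^2/(16\sigma_2^4)$ in the relevant range of $t$, while the remaining finite interval is absorbed into $C$.

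For (ii), the plan is to split the expectation on $\{Y_t^x\geq b_1 t/2\}$ versus its complement intersected with $\{Y_t^x\geq 1\}$. On the former, $(Y_t^x)^{-\gamma}\leq(b_1 t/2)^{-\gamma}$ gives a $(2/b_1)^\gamma t^{-\gamma}$ contribution. On the latter, the integrand is bounded by $1$, and it remains to estimate $P(Y_t^x<b_1 t/2)=P(M_t^x<-b_1 t/2-x)$. Using $x\geq x_0$ and, for $t\geq 4|x_0|/b_1$, the bound $-b_1 t/2-x_0\leq -b_1 t/4$, this is at most $P(M_t^x\leq -b_1 t/4)$. By $M_t^x=\tilde{B}_{\langle M^x\rangle_t}$, $\langle M^x\rangle_t\leq\sigma_2^2 t$ and the reflection principle,
\[
P(M_t^x\leq -b_1 t/4)\leq P\bigl(\inf_{u\leq\sigma_2^2 t}\tilde{B}_u\leq -b_1 t/4\bigr)\leq 2\exp\bigl(-b_1^2 t/(32\sigma_2^2)\bigr),
\]
which is $o(t^{-\gamma})$ as $t\to\infty$. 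Combining the two contributions yields the asserted bound.

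The main technical obstacle is in (i), where the exponent $b_1^2\sigma_1^2/(16\sigma_2^4)$ is tight enough that the splitting threshold $K+aT_0/2$ must be handled carefully: loosening either the Gaussian tail, the exponential factor from the running infimum of $W_v+av$, or the time-change bounds in \eqref{eq:quad} would spoil the constant $16$ in the denominator. Apart from this bookkeeping, the argument consists of standard Gaussian tail and reflection estimates for the Brownian motion $\tilde{B}$ produced by the time change.
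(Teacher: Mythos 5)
Your proof is correct, and for part (i) it takes a genuinely different route from the paper after the common reduction. Both proofs first pass through the Dambis--Dubins--Schwarz representation and the two-sided bounds \eqref{eq:quad} to dominate $P(\inf_{s\ge t}Y_s^x\le L)$ by $P(\inf_{u\ge\sigma_1^2 t}(\tilde B_u+au)\le L-x_0)$ with $a=b_1/\sigma_2^2$. From there the paper decomposes $[\sigma_1^2 t,\infty)$ dyadically into blocks $[\sigma_1^2 2^n t,\sigma_1^2 2^{n+1}t]$, applies Brownian scaling on each block, bounds each term by a Gaussian tail via the reflection principle, and sums the resulting series; the factor $16$ appears from the $\frac14(\cdot)^2$ in the first block. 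You instead apply the Markov property at $T_0=\sigma_1^2 t$ and exploit the exact exponential law (rate $2a$) of $-\inf_{v\ge 0}(W_v+av)$ to rewrite the probability as $E[1\wedge e^{-2a((\tilde B_{T_0}+aT_0-K)\vee 0)}]$, then split according to whether $\tilde B_{T_0}\le -aT_0/2$; on one side the exponential factor gives $e^{-a^2T_0}$, on the other a single Gaussian tail gives $e^{-a^2T_0/16}$ once $T_0$ clears the explicit threshold in $K$ and $a$. Both routes land on the exponent $b_1^2\sigma_1^2/(16\sigma_2^4)$; yours replaces the dyadic series by one conditional expectation and is arguably cleaner, at the cost of invoking the closed-form law of the running infimum rather than only elementary reflection and scaling. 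For part (ii) your argument is essentially the paper's---split on whether $Y_t^x$ exceeds a level proportional to $t$---with a small variation in the tail: you run the time change through the reflection principle to get the exponential bound $2\exp(-b_1^2 t/(32\sigma_2^2))$, whereas the paper applies Chebyshev with the $2\gamma$-th moment of $M_t^x$ and the Burkholder--Davis--Gundy inequality to get $O(t^{-\gamma})$; either estimate closes the argument, and the remaining finite range $t\in[1,T^*]$ is absorbed into $C$ exactly as you indicate.
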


\begin{proof}
Let  $t\geq \frac{2 \sigma _2^2 [(L-x_0)\vee 0]}{b_1 \sigma _1^2}$ and $x\geq x_0$. 
From \eqref{eq:SDEY}, \eqref{eq:quad} and \eqref{eq:MB} we have
\begin{align*}
P\left( \inf _{s\in [t,\infty )} Y_s^x \leq L\right) &= P\left( \inf _{s\in [t,\infty )} (M_s^x + b_1 s) \leq L - x \right) \\
&\leq P\left( \inf _{s\in [t,\infty )} \left( \tilde{B}_{\langle M^x\rangle _s} + \frac{b_1}{\sigma _2^2} \langle M^x\rangle _s \right) \leq L - x \right) \\
&\leq P\left( \inf _{s\in [\sigma _1^2 t,\infty )} \left( \tilde{B}_s + \frac{b_1}{\sigma _2^2} s \right) \leq L - x \right) .
\end{align*}
Hence,  by using the scaling property of the Brownian motion, 
\begin{equation}\label{eq:asympY-1}
\begin{split}
P\left( \inf _{s\in [t,\infty )} Y_s^x \leq L\right) 
&\leq \sum _{n=0}^\infty P\left( \inf _{s\in [\sigma _1^2 2^n t, \sigma _1^2 2^{n+1} t] } \left( \tilde{B}_s + \frac{b_1}{\sigma _2^2} s \right) \leq L - x_0 \right)\\
&\leq \sum _{n=0}^\infty P\left( \inf _{s\in [1,2]} \left( \tilde{B}_s + \frac{b_1 \sigma _1 \sqrt{t}}{\sigma _2^2} 2^{\frac{n}{2}}s \right) < \frac{L-x_0}{\sigma _1 \sqrt{t}}2^{-\frac{n}{2}} \right).
\end{split}
\end{equation}
Note that if $L-x_0\ge 0$, then for any $t\geq \frac{2 \sigma _2^2 (L-x_0)}{b_1 \sigma _1^2}$,
\[
\frac{L-x_0}{\sigma_1\sqrt{t}}2^{-\frac{n}{2}}
=\frac{(L-x_0)\sqrt{t}}{\sigma_1 t}2^{-\frac{n}{2}}
\le \frac{b_1\sigma_1\sqrt{t}}{2\sigma_2^2}2^{-\frac{n}{2}}
\le \frac{b_1\sigma_1\sqrt{t}}{2\sigma_2^2}2^{\frac{n}{2}}.
\]
Combining this with the density function of the maximum of the Brownian motion (see, e.g., \cite[Corollary 3.1.8]{MT17}), 
we get 
\begin{align*}
&P\left( \inf _{s\in [1,2]} \left( \tilde{B}_s + \frac{b_1 \sigma _1 \sqrt{t}}{\sigma _2^2} 2^{\frac{n}{2}}s \right) < \frac{L-x_0}{\sigma _1 \sqrt{t}}2^{-\frac{n}{2}} \right)\\
&\le P\left( \inf _{s\in [0,2]} B_s < - \frac{b_1 \sigma _1 \sqrt{t}}{2 \sigma _2^2} 2^{\frac{n}{2}} \right)\\
&= \frac{2}{\sqrt{4\pi}} \int _{\frac{b_1 \sigma _1 \sqrt{t}}{2 \sigma _2^2} 2^{\frac{n}{2}}}^{\infty} e^{-\frac{y^2}{4}} dy \\
&\le  \frac{2}{\sqrt{\pi}} \left( \frac{b_1 \sigma _1 \sqrt{t}}{2 \sigma _2^2} 2^{\frac{n}{2}} \right) ^{-1} 
\exp \left( -\frac{1}{4} \left( \frac{b_1 \sigma _1 \sqrt{t}}{2 \sigma _2^2} 2^{\frac{n}{2}} \right) ^2 \right).
\end{align*}
Then by \eqref{eq:asympY-1},
\begin{align*}
P\left( \inf _{s\in [t,\infty )} Y_s^x \leq L\right) 
&\leq  \sum _{n=0}^\infty \frac{2}{\sqrt{\pi}} \left( \frac{b_1 \sigma _1 \sqrt{t}}{2 \sigma _2^2} 2^{\frac{n}{2}} \right) ^{-1} 
\exp \left( -\frac{1}{4} \left( \frac{b_1 \sigma _1 \sqrt{t}}{2 \sigma _2^2} 2^{\frac{n}{2}} \right) ^2 \right)\\
&\leq \frac{C \sigma _2^2}{b_1 \sigma _1 \sqrt{t}} \exp \left( -\frac{b_1^2\sigma _1^2}{16 \sigma _2^4}t \right),
\end{align*}
where $C$ is a universal constant.
Thus, we obtain (i).

From \eqref{eq:SDEY}, \eqref{eq:quad} and \eqref{eq:MB} we have that for $t> \left( -\frac{2x_0}{b_1}+ 1\right) \vee 0$
\begin{align*}
&E\left[ (Y_t^x)^{-\gamma} {\bf 1}_{[1,\infty)} (Y_t^x)\right] \\
&=  E\left[ (x+M_t^x+b_1t)^{-\gamma} {\bf 1}_{[1,\infty)} (x+M_t^x+b_1t) \right] \\
&\leq \left( x+\frac{b_1}{2}t \right) ^{-\gamma} P\left( M_t^x \geq - \frac{b_1}{2}t \right) + P\left( M_t^x < - \frac{b_1}{2}t \right) \\
&\leq \left( x_0+\frac{b_1}{2}t \right) ^{-\gamma} + \left( \frac{2}{b_1 t} \right) ^{2\gamma} E\left[ |M_t^x|^{2\gamma} \right] .
\end{align*}
Since the Burkholder-Davis-Gundy inequality and \eqref{eq:quad} imply
\[
E\left[ |M_t^x|^{2\gamma} \right] \leq C E\left[ \left( \int_0^t\sigma(s,Y_s^x)^2ds \right) ^{\gamma} \right] \leq C \sigma _2 ^{2\gamma} t^\gamma
\]
where $C$ is a positive constant depending only on $\gamma$, we obtain (ii).
\end{proof}

\begin{prop}\label{prop:asymY2}
\begin{enumerate}
\item There exists a constant $C>0$ depending only on $\sigma _2$ and $b_1$ such that
\[
P\left( \inf _{s\in [0,\infty )} Y_s^x < y \right) \leq C \left((x-y)^{-1} e^{-\frac{1}{2} (x-y)^2} + e^{-\frac{b_1}{\sigma _2^2}(x-y)}\right) , \quad x\geq y .
\]

\item Let $\varepsilon \in (0,b_1)$. Then, there exists a constant $C>0$ depending only on $\sigma _2$, $b_1$ and $\varepsilon$ such that
\[
P\left( Y_t^x \leq y \right) \leq C e^{-\frac{\varepsilon ^2}{2\sigma _2^2}t} ,\quad (b_1 -\varepsilon ) t > y -x.
\]
\end{enumerate}
\end{prop}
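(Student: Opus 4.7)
My plan is to handle the easier part (ii) via a direct exponential Chebyshev bound for $M_t^x$, and then (i) by splitting the time axis at a carefully chosen threshold, treating the short-time piece with a Gaussian reflection estimate and the long-time tail with Doob's maximal inequality applied to an exponential supermartingale. For (ii), the hypothesis $(b_1-\varepsilon)t > y-x$ rearranges to $y-x-b_1 t<-\varepsilon t$, so $\{Y_t^x\le y\}\subseteq\{M_t^x<-\varepsilon t\}$. The stochastic exponential $\exp(-\lambda M_s^x-\lambda^2\langle M^x\rangle_s/2)$ is a non-negative supermartingale of mean at most $1$; combined with the pathwise bound $\langle M^x\rangle_t\le\sigma_2^2 t$, this yields $E[\exp(-\lambda M_t^x)]\le\exp(\lambda^2\sigma_2^2 t/2)$ for every $\lambda>0$. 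The exponential Chebyshev inequality then gives $P(M_t^x<-\varepsilon t)\le\exp(\lambda^2\sigma_2^2 t/2-\lambda\varepsilon t)$, and optimizing at $\lambda=\varepsilon/\sigma_2^2$ delivers $\exp(-\varepsilon^2 t/(2\sigma_2^2))$.

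For (i), let $z:=x-y\ge 0$ and split at $T:=1/\sigma_2^2$, chosen so that $\sigma_2^2 T=1$ (forcing the time-changed Brownian motion to have variance exactly $1$ on $[0,T]$). On $s\in[0,T]$, since $b_1 s\ge 0$, $\{Y_s^x<y\}$ forces $M_s^x<-z$; using \eqref{eq:MB} and $\langle M^x\rangle_T\le 1$ gives $\inf_{s\le T}M_s^x\ge\inf_{u\le 1}\tilde B_u$, so the reflection principle together with the standard Mills-ratio Gaussian tail yields
\[
P\Bigl(\inf_{s\le T}Y_s^x<y\Bigr)\le 2P(Z>z)\le \frac{2}{z\sqrt{2\pi}}\,e^{-z^2/2},
\]
which is the first term. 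For the unrestricted infimum I would apply Doob's maximal inequality to $N_s:=\exp(-\lambda M_s^x-\lambda^2\langle M^x\rangle_s/2)$ with $\lambda:=b_1/\sigma_2^2$: on $\{Y_s^x<y\}$, the inequality $-M_s^x>z+b_1 s$ together with $\langle M^x\rangle_s\le\sigma_2^2 s$ gives
\[
N_s>\exp\bigl(\lambda z+(\lambda b_1-\lambda^2\sigma_2^2/2)s\bigr)=\exp\bigl(\lambda z + b_1^2 s/(2\sigma_2^2)\bigr)\ge e^{\lambda z},
\]
hence $\{\inf_s Y_s^x<y\}\subseteq\{\sup_s N_s\ge e^{\lambda z}\}$ and Doob produces $e^{-\lambda z}=\exp(-(b_1/\sigma_2^2)(x-y))$, the second term.

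No deep obstruction is anticipated; the main technical balancing is the threshold choice $T=1/\sigma_2^2$, which is what makes the Gaussian piece emerge with the exponent $-(x-y)^2/2$ (any $\sigma_2$-dependent prefactor is absorbed into $C$). The second delicate point is verifying $\lambda b_1-\lambda^2\sigma_2^2/2\ge 0$, which restricts $\lambda$ to $(0,2b_1/\sigma_2^2]$; the suboptimal choice $\lambda=b_1/\sigma_2^2$ lies safely inside this range and matches the exponent stated in (i). Everything else is a careful execution of the Dambis--Dubins--Schwartz time change from Section~\ref{sec:marti+drift} and the two standard inequalities (reflection principle and Doob).
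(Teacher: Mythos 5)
Your proposal is correct, and it takes a genuinely different route from the paper. For part (i), the paper first reduces to the driftless time-changed Brownian motion via the DDS representation \eqref{eq:MB}, then splits time dyadically, and on each dyadic block uses Brownian scaling together with the explicit density of the running minimum (from \cite[Corollary~3.1.8]{MT17}), finally summing a geometric series; the Gaussian piece comes from the block $[0,1]$ and the exponential piece from the tail sum. You instead observe that the stochastic exponential $N_s=\exp(-\lambda M_s^x-\tfrac{\lambda^2}{2}\langle M^x\rangle_s)$ is a nonnegative supermartingale, that on $\{Y_s^x<y\}$ one has $N_s\geq e^{\lambda(x-y)}$ for the choice $\lambda=b_1/\sigma_2^2$ (because $\lambda b_1-\tfrac{\lambda^2}{2}\sigma_2^2=\tfrac{b_1^2}{2\sigma_2^2}\ge 0$), and that Doob's maximal inequality then gives $P(\inf_{s\ge0}Y_s^x<y)\le e^{-(b_1/\sigma_2^2)(x-y)}$ in one step. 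Worth noting: this single estimate already implies the statement (with $C=1$), so the short-time reflection/Mills-ratio piece you include, while correct, is not needed; the two-term form in the statement is what the paper's dyadic proof naturally produces rather than a logical necessity. For part (ii), the paper again goes through the DDS reduction and the Gaussian tail, obtaining a prefactor $\sigma_2\sqrt t/(b_1 t-(y-x))$ that must then be controlled; your exponential Chebyshev (with the same supermartingale bound $E[e^{-\lambda M_t^x}]\le e^{\lambda^2\sigma_2^2 t/2}$ and $\lambda=\varepsilon/\sigma_2^2$) delivers $P(Y_t^x\le y)\le e^{-\varepsilon^2 t/(2\sigma_2^2)}$ directly with $C=1$, which is cleaner. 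The trade-off: the paper's method is elementary and self-contained (only the explicit maximum density), while yours invokes the standard exponential martingale machinery; yours is shorter and yields slightly sharper constants.
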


\begin{proof}
Similarly to the proof of Proposition~\ref{prop:asymY} we have
\begin{align*}
P\left( \inf _{s\in [0,\infty )} Y_s^x < y \right) &= P\left( \inf _{s\in [0,\infty )} (\tilde{B}_{\langle M^x\rangle _s} +b_1s) \leq y - x \right) \\
&\leq P\left( \inf _{s\in [0,\infty )} \left( \tilde{B}_s + \frac{b_1}{\sigma _2^2} s\right) \leq y - x \right)
\end{align*}
for $x,y \in{\mathbb R}$.
Hence, in the case that $x>y$, by using the scaling property of the Brownian motion and the density function of the maximum of the Brownian motion (see e.g \cite[Corollary 3.1.8]{MT17}), we have
\begin{align*}
&P\left( \inf _{s\in [0,\infty )} Y_s^x < y \right) \\
&\leq P\left( \inf _{s\in [0,1]} \left( \tilde{B}_s + \frac{b_1}{\sigma _2^2} s \right) < y-x\right) + \sum _{n=0}^\infty P\left( \inf _{s\in [1,2]} \left( \tilde{B}_s + \frac{b_1}{\sigma _2^2} 2^{\frac{n}{2}}s \right) < 2^{-\frac{n}{2}}(y-x) \right) \\
&\leq P\left( \inf _{s\in [0,1]} B_s < y-x \right) + \sum _{n=0}^\infty P\left( \inf _{s\in [0,2]} B_s < - \left( \frac{b_1}{\sigma _2^2} 2^{\frac{n}{2}} + 2^{-\frac{n}{2}} (x-y) \right) \right) \\
&\leq \frac{2}{\sqrt{2\pi}} \int _{x-y}^{\infty} e^{-\frac{z^2}{2}} dz + \frac{2}{\sqrt{4\pi}} \sum _{n=0}^\infty \int _{\frac{b_1}{\sigma _2^2}  2^{\frac{n}{2}} + 2^{-\frac{n}{2}} (x-y)}^{\infty} e^{-\frac{y^2}{4}} dy \\
&\leq \frac{2e^{-\frac{1}{2} (x-y)^2} }{\sqrt{2\pi}(x-y)}+ \frac{2}{\sqrt{\pi}} \sum _{n=0}^\infty \left( \frac{b_1}{\sigma _2^2} 2^{\frac{n}{2}} + 2^{-\frac{n}{2}} (x-y) \right) ^{-1} \exp \left( -\frac{1}{4} \left( \frac{b_1}{\sigma _2^2}  2^{\frac{n}{2}} + 2^{-\frac{n}{2}} (x-y) \right) ^2 \right) \\
&\leq C \left((x-y)^{-1} e^{-\frac{1}{2} (x-y)^2} + \frac{\sigma _2^2}{b_1} e^{-\frac{b_1}{\sigma _2^2}(x-y)}\right)
\end{align*}
where $C$ is a universal constant.
Thus, we obtain (i).

Next we show (ii) and consider the case that $(b_1 -\varepsilon ) t > y -x$.
Similarly to the proof of Proposition~\ref{prop:asymY} we also have
\[
P\left( Y_t^x \leq y \right) \leq  \frac{2}{\sqrt{2\pi}} \cdot \frac{\sigma _2 \sqrt{t}}{b_1t-(y-x)} \exp \left( -\frac{(b_1t-(y-x))^2}{2\sigma _2^2 t} \right).
\]
Hence, (ii) holds.
\end{proof}

\section{Case of bounded coefficients}\label{sec:BE}

Consider a one-dimensional SDE
\begin{equation}\label{eq:SDE1}\left\{\begin{array}{l}
dX_t^x = \sigma (t,X_t^x) dB_t + b(t,X_t^x)dt, \\
X_0^x =x \in {\mathbb R},
\end{array}\right.\end{equation}
where $\sigma , b \in C^{0,1}([0,\infty) \times {\mathbb R})$.  
We assume the next conditions on $\sigma$ and $b$.

\begin{ass}\label{ass1}
\begin{itemize}
\item There exist constants $\sigma _1, \sigma _2\in (0,\infty )$ such that
\begin{equation}\label{ass:s1}\tag{3.$\sigma$1}
\sigma _1 \leq \sigma (t,y) \leq \sigma _2, \quad (t,y)\in [0,\infty )\times {\mathbb R}.
\end{equation}

\item There exist constants $\alpha , \sigma _3\in (0, \infty )$ such that
\begin{equation}\label{ass:s2}\tag{3.$\sigma$2}
\sup _{t\in [0,\infty )} \left| \frac{\partial}{\partial y} \sigma (t,y)\right| \leq \sigma _3 (y\vee 1)^{-\alpha -1}, \quad y\in {\mathbb R}.
\end{equation}

\item There exist constants $b _1, b _2\in (0,\infty )$ such that
\begin{equation}\label{ass:b1}\tag{3.$b$1}
b _1 \leq b (t,y) \leq b _2, \quad (t,y)\in [0,\infty )\times {\mathbb R}.
\end{equation}

\item There exist constants $\beta ,b_3 \in (0, \infty )$ such that
\begin{equation}\label{ass:b2}\tag{3.$b$2}
\sup _{t\in [0,\infty )} \left| \frac{\partial}{\partial y} b(t,y)\right| \leq b_3 y^{-\beta -1}, \quad y\geq 1.
\end{equation}

\end{itemize}
\end{ass}

Under the assumption \eqref{ass:s2} for any $y_1,y_2 \in [1,\infty )$ it holds that 
\begin{align*}
\sup _{t\in [0,\infty )} |\sigma (t,y_2) - \sigma (t, y_1)| & \leq \int _{\min\{ y_1,y_2\}}^{\max\{ y_1,y_2\}} \sup _{t\in [0,\infty )} \left| \frac{\partial}{\partial \tilde{y}} \sigma (t,\tilde{y})\right| d\tilde{y} \\
& \leq \frac{\sigma _3 }{\alpha} \min\{ y_1,y_2\} ^{-\alpha} .
\end{align*}
Hence, there exists $\sigma _\infty \in C([0,\infty ); [\sigma _1, \sigma _2])$ such that
\[
\lim _{y\rightarrow \infty} \sup _{t\in [0,\infty )} |\sigma (t,y)-\sigma _\infty (t)|  = 0.
\]
Moreover, it holds that
\begin{equation}\label{eq:ests}
\sup _{t\in [0,\infty )} |\sigma (t,y)-\sigma _\infty (t)| \leq \frac{\sigma _3 }{\alpha} y^{-\alpha}, \quad y\geq 1.
\end{equation}
Similarly, there exists  $b_\infty \in C([0,\infty ); [b_1, b_2])$ such that
\[
\lim _{y\rightarrow \infty} \sup _{t\in [0,\infty )} |b(t,y)-b_\infty (t)| = 0,
\]
and 
\begin{equation}\label{eq:estb}
\sup _{t\in [0,\infty )} |b(t,y)-b_\infty (t)| \leq \frac{b_3}{\beta} y^{-\beta }, \quad y\geq 1.
\end{equation}

On the other hand, from the comparison principle (see \cite[Theorem 1.3]{Ya73}) and \eqref{ass:b2}, we have
\begin{equation}\label{eq:comparison}
Y_t^x \leq X_t^x, \quad t\in [0,\infty ) \quad \mbox{almost surely}
\end{equation}
where $Y$ is the pathwise-unique solution of \eqref{eq:SDEY}.

Now we see limit theorems of $X_t^x$. Let
\[
\overline{\sigma _\infty}(t) := \sqrt{\frac{1}{t} \int _0^t \sigma _\infty (s)^2 ds} \quad \mbox{and} \quad \overline{b _\infty}(t) := \frac{1}{t} \int _0^t b_\infty (s) ds .
\]
Note that, if $\sigma$ and $b$ do not depend on the time parameter $t$, then $\sigma _\infty$ and $b_\infty$, respectively, are independent of $t$.
In this case, we have $\overline{\sigma _\infty} = \sigma _\infty$ and $\overline{b _\infty} = b_\infty$.
By the observations above, we get a version of the law of large numbers as follows.

\begin{thm}\label{thm:main1}
Under Assumption~{\rm \ref{ass1}}, it holds that
\[
\lim _{t\rightarrow \infty} \left( \frac{X_t^x}{t} - \overline{b _\infty}(t) \right) = 0
\]
almost surely for each $x\in {\mathbb R}$.
\end{thm}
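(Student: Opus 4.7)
The plan is to decompose
\[
\frac{X_t^x}{t} - \overline{b_\infty}(t) = \frac{x}{t} + \frac{1}{t}\int_0^t \sigma(s,X_s^x)\,dB_s + \frac{1}{t}\int_0^t \bigl(b(s,X_s^x) - b_\infty(s)\bigr)\,ds,
\]
and show that each of the three terms tends to $0$ almost surely. The first term is trivially $O(1/t)$.

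For the martingale term, set $N_t := \int_0^t \sigma(s,X_s^x)\,dB_s$. By \eqref{ass:s1} one has $\langle N\rangle_t \leq \sigma_2^2\, t$, so the Dambis--Dubins--Schwarz representation (exactly as in \eqref{eq:MB}) gives $N_t = \tilde B_{\langle N\rangle_t}$ for some Brownian motion $\tilde B$, and the strong law of large numbers for Brownian motion yields
\[
\left|\frac{N_t}{t}\right| \leq \sigma_2^2 \cdot \frac{|\tilde B_{\langle N\rangle_t}|}{\langle N\rangle_t} \xrightarrow{t\to\infty} 0
\]
almost surely, exactly as in the derivation of \eqref{eq:lli-1}.

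For the drift term, the key tool is the comparison inequality \eqref{eq:comparison}, which ensures $X_s^x \geq Y_s^x$ for all $s\geq 0$ almost surely, where $Y_s^x$ solves \eqref{eq:SDEY} with drift $b_1$. Combining this with \eqref{eq:divergence-0} gives
\[
\liminf_{s\to\infty} \frac{X_s^x}{s} \geq b_1 > 0 \quad \text{almost surely}.
\]
Consequently, almost surely there exists a random $T=T(\omega)\geq 1$ such that $X_s^x \geq \tfrac{b_1}{2} s \geq 1$ for all $s\geq T$. On $[T,t]$ we apply \eqref{eq:estb} to obtain
\[
\bigl|b(s,X_s^x) - b_\infty(s)\bigr| \leq \frac{b_3}{\beta}(X_s^x)^{-\beta} \leq \frac{b_3}{\beta}\Bigl(\frac{b_1 s}{2}\Bigr)^{-\beta},
\]
while on $[0,T]$ we use the uniform bound $|b(s,X_s^x)-b_\infty(s)|\leq 2b_2$ from \eqref{ass:b1}. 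Splitting the integral accordingly,
\[
\frac{1}{t}\int_0^t \bigl|b(s,X_s^x)-b_\infty(s)\bigr|\,ds \leq \frac{2b_2 T}{t} + \frac{b_3}{\beta t}\int_T^t \Bigl(\frac{b_1 s}{2}\Bigr)^{-\beta} ds,
\]
and the right-hand side tends to $0$ as $t\to\infty$ for every $\beta>0$ (the second integral behaves like $\log t$ if $\beta=1$ and like $t^{1-\beta}\vee 1$ otherwise; in every case divided by $t$ it vanishes). This completes the proof.

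The main conceptual obstacle is establishing that $X_s^x$ grows at least linearly in $s$ almost surely, since one needs a positive deterministic lower rate in order to apply the pointwise decay bound \eqref{eq:estb}; this is handled cleanly by the comparison principle \eqref{eq:comparison} together with the already-established law of large numbers \eqref{eq:divergence-0} for the auxiliary process $Y_t^x$.
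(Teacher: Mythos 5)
Your proof is correct and follows essentially the same route as the paper: the same three-term decomposition, the same Dambis--Dubins--Schwarz argument for the martingale part, and the comparison principle \eqref{eq:comparison} plus \eqref{eq:divergence-0} to control the drift part. The only difference is that the paper handles the drift term more simply via a Ces\`aro argument --- since $X_s^x\to\infty$ a.s.\ by \eqref{eq:divergence}, \eqref{eq:estb} gives $|b(s,X_s^x)-b_\infty(s)|\to 0$ pointwise a.s., and hence its time-average vanishes --- whereas you establish the stronger (and unnecessary) linear lower growth $\liminf_s X_s^x/s\ge b_1$ to get a quantitative rate; both are valid, but the paper's version requires only divergence, not a rate.
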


\begin{proof}
From \eqref{eq:comparison} and \eqref{eq:divergence-0} we have
\begin{equation}\label{eq:divergence}
\lim_{t\rightarrow\infty}X_t^x=\infty, \quad \mbox{almost surely.}
\end{equation}
Similarly to \eqref{eq:lli-1}, we also obtain
\begin{equation}\label{eq:lli-2}
\lim_{t\rightarrow\infty}\frac{1}{t}\int_0^t\sigma(s,X_s^x)dB_s=0 \quad \mbox{almost surely.}
\end{equation}
From \eqref{eq:estb} and \eqref{eq:divergence}, it follows that
\[
|b(s,X_s^x)-b_\infty (s)| \leq \sup_{t\in [0,\infty)}|b(t,X_s^x)-b_\infty (t)|\rightarrow 0 \quad (s\rightarrow \infty) \quad \mbox{almost surely,}
\]
and thus 
\[
\lim _{t\rightarrow \infty} \frac{1}{t}\int_0^t \left( b(s,X_s^x) - b_\infty (s) \right) ds = 0 \quad \quad \mbox{almost surely.}
\]
Hence, from \eqref{eq:SDE1} and \eqref{eq:lli-2}, we have 
\begin{align*}
\frac{X_t^x}{t} - \overline{b _\infty}(t) &=\frac{x}{t}+\frac{1}{t}\int_0^t\sigma(s,X_s^x)dB_s+\frac{1}{t}\int_0^t \left( b(s,X_s^x) - b_\infty (s) \right) ds \\
&\rightarrow 0 \quad (t\rightarrow \infty) \quad \mbox{almost surely.}
\end{align*}
Thus, we obtain the assertion.
\end{proof}

We also have the central limit theorem as follows.

\begin{thm}\label{thm:CLT}
Under Assumption~{\rm \ref{ass1}} with $\beta >\frac{1}{2}$, it holds that for all $p\in [1,\infty )$ and $t\geq 1$
\begin{align*}
\sup _{x\in [x_0, \infty )} 
E\left[ \left| \frac{X_t^x -x - t \overline{b _\infty}(t) }{\sqrt{t}} - \frac{1}{\sqrt{t}}\int _0^t \sigma _\infty (s) dB_s \right| ^p\right] ^{\frac{1}{p}} \\
\leq C t^{-\gamma}\left(1+\sqrt{\log t}\cdot  {\bf 1}_{\left\{ \frac{1}{2}\right\} }(\alpha )+\log t\cdot {\bf 1}_{\{ 1\}}(\beta )\right),
\end{align*}
where $\gamma = \min \left\{ \alpha , \beta -\frac{1}{2}, \frac{1}{2} \right\}$ and $C$ is a positive constant independent of $t$.
In particular, the law of $\frac{X_t^x - t \overline{b _\infty}(t)}{\overline{\sigma _\infty}(t) \sqrt{t}}$ converges to the standard normal distribution as $t\rightarrow \infty$.
\end{thm}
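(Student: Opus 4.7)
The plan is to start from the SDE \eqref{eq:SDE1}, which yields the decomposition
\[
\frac{X_t^x-x-t\overline{b_\infty}(t)}{\sqrt{t}} - \frac{1}{\sqrt{t}}\int_0^t\sigma_\infty(s)\,dB_s = \frac{1}{\sqrt{t}}\int_0^t[\sigma(s,X_s^x)-\sigma_\infty(s)]\,dB_s + \frac{1}{\sqrt{t}}\int_0^t[b(s,X_s^x)-b_\infty(s)]\,ds,
\]
so it suffices to estimate each of the two right-hand summands in $L^p$. The growth estimates \eqref{eq:ests} and \eqref{eq:estb}, combined with the comparison \eqref{eq:comparison} and the boundedness of $\sigma$ and $b$, give
\[
|\sigma(s,X_s^x)-\sigma_\infty(s)| \le C\bigl[(Y_s^x)^{-\alpha}\mathbf{1}_{\{Y_s^x\ge 1\}}+\mathbf{1}_{\{Y_s^x<1\}}\bigr]
\]
and the analogous bound for $b$ with exponent $\beta$, so the problem reduces to moment estimates for $Y_s^x$.

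For the drift piece I would apply Minkowski's integral inequality directly to push the $L^p$ norm inside the time integral. For the stochastic integral I would first invoke the Burkholder--Davis--Gundy inequality and then Minkowski in $L^{p/2}$-norm (it suffices to treat $p\ge 2$, since on a probability space $L^p\subseteq L^2$ for smaller $p$). Both reductions leave integrals of $E[(Y_s^x)^{-\rho p}\mathbf{1}_{\{Y_s^x\ge 1\}}]^{1/p}$ and of $P(Y_s^x<1)^{1/p}$ against $ds$; the first is bounded by $Cs^{-\rho}$ uniformly in $x\in[x_0,\infty)$ by Proposition~\ref{prop:asymY}(ii) applied with exponent $\rho p$, while the second is exponentially small in $s$ (once $s$ exceeds a constant depending on $x_0$) by Proposition~\ref{prop:asymY2}(ii). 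Evaluating $\int_0^t s^{-2\alpha}\,ds$ and $\int_0^t s^{-\beta}\,ds$ and dividing by $\sqrt{t}$ then yields the rates $t^{-\alpha}$, $t^{-1/2}\sqrt{\log t}$, $t^{-1/2}$ for the stochastic part (according as $\alpha<\tfrac{1}{2},=\tfrac{1}{2},>\tfrac{1}{2}$) and $t^{1/2-\beta}$, $t^{-1/2}\log t$, $t^{-1/2}$ for the drift part (according as $\beta<1,=1,>1$); combining them produces precisely the stated bound with $\gamma=\min(\alpha,\beta-\tfrac{1}{2},\tfrac{1}{2})$.

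For the CLT assertion I would observe that $\frac{1}{\overline{\sigma_\infty}(t)\sqrt{t}}\int_0^t\sigma_\infty(s)\,dB_s$ is exactly standard normal by It\^o's isometry (its variance is identically $1$), while the $L^1$ estimate just proved, together with $\overline{\sigma_\infty}(t)\ge\sigma_1$ and $x/\sqrt{t}\to 0$, shows that the error vanishes in probability, so Slutsky's theorem yields the claim. The main technical care will lie in bookkeeping the three cases of $\alpha$ and $\beta$ simultaneously and in extracting the logarithmic corrections at the critical exponents $\alpha=\tfrac{1}{2}$ and $\beta=1$; uniformity in $x\in[x_0,\infty)$ is essentially automatic, since the comparison \eqref{eq:comparison} uses $Y$ with the same initial point and Propositions~\ref{prop:asymY} and \ref{prop:asymY2} are themselves uniform in $x\in[x_0,\infty)$.
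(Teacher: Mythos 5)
Your proposal is correct and follows essentially the same route as the paper: the same decomposition into a stochastic-integral error and a drift error, Burkholder--Davis--Gundy plus Minkowski's integral inequality (reducing to $p\geq 2$), the split on $\{Y_s^x\geq 1\}$ versus $\{Y_s^x<1\}$ via \eqref{eq:ests}, \eqref{eq:estb} and the comparison \eqref{eq:comparison}, and the decay estimates from Proposition~\ref{prop:asymY} (the paper uses Proposition~\ref{prop:asymY}(i) for the small-$Y$ probability where you cite Proposition~\ref{prop:asymY2}(ii); both yield the needed exponential bound uniformly in $x\geq x_0$). One cosmetic slip: on a probability space the inclusion runs $L^2\subseteq L^p$ for $p<2$ (the $L^p$ norm is monotone increasing in $p$), not $L^p\subseteq L^2$; the intended reduction to $p\geq 2$ is nevertheless valid.
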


Before the proof of Theorem \ref{thm:CLT}, we recall the following fact: 
let $(S,{\mathcal G}, \nu)$ be a measure space, 
and let $\Phi = \Phi (\omega ,y)$ be an ${\mathcal F}\otimes {\mathcal G}$-measurable function 
such that $\Phi (\omega ,\cdot )$ is $\nu$-integrable for all $\omega \in \Omega$. 
Then for any $q\in [1,\infty )$, by the triangle inequality, we have
\begin{equation}\label{eq:int-norm}
\left\| \int _S \Phi (\cdot ,y) \nu (dy) \right\| _{L^q(\Omega )} \leq \int _S \left\| \Phi (\cdot ,y) \right\| _{L^q(\Omega )} \nu (dy). 
\end{equation}

\begin{proof}[Proof of Theorem {\rm \ref{thm:CLT}}.]
It is sufficient to show the case that $p\geq 2$.
From \eqref{eq:SDE1}, \eqref{eq:int-norm} and the Burkholder-Davis-Gundy inequality we have
\begin{align*}
&E\left[ \left| \frac{X_t^x -x - t \overline{b _\infty}(t) }{\sqrt{t}} - \frac{1}{\sqrt{t}}\int _0^t \sigma _\infty (s) dB_s \right| ^p \right] ^{\frac{1}{p}}\\
&\leq E\left[ \left| \frac{1}{\sqrt{t}}\left( \int _0^t (\sigma (s,X_s^x) - \sigma _\infty (s)) dB_s \right) \right| ^p \right] ^{\frac{1}{p}} + E\left[ \left| \frac{1}{\sqrt{t}}\left( \int _0^t (b(s,X_s^x) - b_\infty (s)) ds \right) \right| ^p \right] ^{\frac{1}{p}} \\
&\leq \frac{C_p}{\sqrt{t}} E\left[ \left( \int _0^t (\sigma (s,X_s^x) - \sigma _\infty (s))^2 ds \right) ^{\frac{p}{2}}\right] ^{\frac{1}{p}} + \frac{1}{\sqrt{t}}\int _0^t E\left[ \left| b(s,X_s^x) - b_\infty (s) \right| ^p \right] ^{\frac{1}{p}} ds \\
&\leq \frac{C_p}{\sqrt{t}} \left( \int _0^t E\left[ (\sigma (s,X_s^x) - \sigma _\infty (s))^p \right] ^{\frac{2}{p}} ds \right) ^{\frac{1}{2}} + \frac{1}{\sqrt{t}}\int _0^t E\left[ \left| b(s,X_s^x) - b_\infty (s) \right| ^p \right] ^{\frac{1}{p}} ds
\end{align*}
where $C_p$ is a constant depending on $p$.
Then, by \eqref{eq:ests}, \eqref{eq:estb} and \eqref{eq:comparison} we have
\begin{align*}
&E\left[ \left| \frac{X_t^x -x - t \overline{b _\infty}(t) }{\sqrt{t}} - \frac{1}{\sqrt{t}}\int _0^t \sigma _\infty (s) dB_s \right| ^p \right] ^{\frac{1}{p}}\\
&\leq \frac{C_p}{\sqrt{t}} \left( \int _0^t E\left[ (\sigma (s,X_s^x) - \sigma _\infty (s) )^p {\bf 1}_{(-\infty ,1)}(Y_s^x) \right] ^{\frac{2}{p}} ds \right) ^{\frac{1}{2}} \\
&\quad + \frac{C_p}{\sqrt{t}} \left( \int _0^t E\left[ (\sigma (s,X_s^x) - \sigma _\infty (s) )^p {\bf 1}_{[1,\infty )}(Y_s^x) \right] ^{\frac{2}{p}} ds \right) ^{\frac{1}{2}} \\
&\quad + \frac{1}{\sqrt{t}}\int _0^t E\left[ \left| b(s,X_s^x) - b_\infty (s) \right| ^p {\bf 1}_{(-\infty ,1)}(Y_s^x) \right] ^{\frac{1}{p}} ds \\
&\quad + \frac{1}{\sqrt{t}}\int _0^t E\left[ \left| b(s,X_s^x) - b_\infty (s) \right| ^p {\bf 1}_{[1,\infty )}(Y_s^x) \right] ^{\frac{1}{p}} ds \\
&\leq  \frac{2C_p \sigma _2 }{\sqrt{t}} \left( \int _0^t  P(Y_s^x \leq 1) ^{\frac{2}{p}} ds \right) ^{\frac{1}{2}} + \frac{C_p \sigma _3}{\alpha \sqrt{t}} \left( \int _0^t E\left[ \left( Y_s^x\right) ^{-\alpha p} {\bf 1}_{[1,\infty )}(Y_s^x) \right] ^{\frac{2}{p}} ds \right) ^{\frac{1}{2}} \\
&\quad + \frac{2b_2}{\sqrt{t}} \int _0^t  P(Y_s^x \leq 1) ^{\frac{1}{p}} ds + \frac{b_3}{\beta \sqrt{t}} \int _0^t E\left[ \left( Y_s^x\right) ^{-\beta p} {\bf 1}_{[1,\infty )}(Y_s^x) \right] ^{\frac{1}{p}} ds .
\end{align*}
Hence, the first assertion follows from Proposition \ref{prop:asymY}.

Since $\frac{1}{\sqrt{t}}\int _0^t \sigma _\infty (s) dB_s$ has the centered normal distribution with variance
\[
E\left[ \left( \frac{1}{\sqrt{t}}\int _0^t \sigma _\infty (s) dB_s\right) ^2 \right] = \frac{1}{t}\int _0^t \sigma _\infty (s)^2 ds,
\]
the second assertion follows.
\end{proof}

Let us present  the Berry-Esseen bound for the solution $X_t^x$ to the SDE \eqref{eq:SDE1} as follows.

\begin{thm}\label{thm:BE1}
Let $x_0\in {\mathbb R}$ and Assumption~\ref{ass1} with $\alpha >\frac{1}{2}$ and $\beta >1$ hold, and consider the solution $X_t^x$ of \eqref{eq:SDE1} for $x\in [x_0,\infty )$.
Furthermore, assume that there exists $q\in (1,\infty )$ satisfying
\begin{equation}\label{ass:sb}
\sup _{(t,y)\in [0,\infty )\times {\mathbb R}} \left( \frac{q(q+1)}{q-1} \left( \frac{\partial \sigma}{\partial y} (t,y) \right) ^2 +2q \frac{\partial b}{\partial y}(t,y) \right) < \frac{\sigma _1^2 b_1^2}{2}.
\end{equation}
Then, there exists a positive constant $C$ depending only on
\[
\sigma _1, \ \sigma _2, \ \sigma _3, \ b_1, \ b_2 , \ b_3 , \ \alpha , \ \beta ,\ q\ \mbox{and}\ x_0
\]
such that 
\[
\sup _{x\in [x_0, \infty )} d_{\rm  TV} \left( {\rm Law} \left(\frac{X_t^x -x - t\overline{b_\infty}(t)}{\overline{\sigma _\infty}(t) \sqrt{t}} \right) , {\rm Law}(Z)\right) \leq \frac{C}{\sqrt{t}}, \quad t\geq 1,
\]
where $Z$ is a random variable with the standard normal distribution.
In particular, it holds that
\[
\sup _{x\in [x_0, \infty )} \sup _{y\in {\mathbb R}} \left| P\left( \frac{X_t^x - x - t\overline{b_\infty}(t)}{\overline{\sigma _\infty}(t) \sqrt{t}} \geq y \right) - P(Z\geq y ) \right| \leq \frac{C}{\sqrt{t}}, \quad t\geq 1 .
\]
\end{thm}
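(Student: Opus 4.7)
The approach is to apply the Malliavin--Stein bound \eqref{eq:MS2} to
\[
F_t := \frac{X_t^x - x - t\overline{b_\infty}(t)}{\overline{\sigma_\infty}(t)\sqrt{t}},
\]
reducing the proof to the two estimates $|E[F_t]| \leq C t^{-1/2}$ and $\mathcal{E}(t) := E\bigl[\bigl|1 - \langle D(-L)^{-1}(F_t - E[F_t]), DF_t\rangle_H\bigr|\bigr] \leq C t^{-1/2}$. The first is a direct consequence of Theorem~\ref{thm:CLT}: under $\alpha > 1/2$ and $\beta > 1$ one has $\gamma = 1/2$, and since the approximating Wiener integral $t^{-1/2}\int_0^t \sigma_\infty(s)\, dB_s$ has mean zero, the triangle inequality gives the bound uniformly in $x \geq x_0$. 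The pointwise CDF statement then follows from $\sup_y|P(F_t \geq y) - P(Z \geq y)| \leq d_{\mathrm{TV}}(\mathrm{Law}(F_t), \mathrm{Law}(Z))$.

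For $\mathcal{E}(t)$ I would start from the standard linearization of Malliavin derivatives of SDEs: for $u \leq t$,
\[
D_u X_t^x = \sigma(u, X_u^x)\, V_{u,t}^x, \qquad V_{u,t}^x = \exp\!\left(\int_u^t \frac{\partial \sigma}{\partial y}(s,X_s^x)\, dB_s - \frac{1}{2}\int_u^t \Bigl(\frac{\partial \sigma}{\partial y}\Bigr)^2(s,X_s^x)\, ds + \int_u^t \frac{\partial b}{\partial y}(s,X_s^x)\, ds\right),
\]
and $D_u X_t^x = 0$ otherwise. Condition \eqref{ass:sb} is tailored so that a H\"older/Girsanov argument gives $\sup_{x\geq x_0,\,0\leq u\leq t<\infty} E[(V_{u,t}^x)^{2q}]<\infty$: splitting the exponential into a true martingale times a remainder, the weight $q(q+1)/(q-1)$ on $(\partial_y \sigma)^2$ and the coefficient $2q$ on $\partial_y b$ are precisely those produced by H\"older with exponent $q$, and the critical threshold $\sigma_1^2 b_1^2/(2M)$ of Corollary~\ref{cor:intY}(i) matches the right-hand side of \eqref{ass:sb} once one inserts the decay bounds \eqref{ass:s2}, \eqref{ass:b2} and the comparison $Y_s^x \leq X_s^x$ of \eqref{eq:comparison}. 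A variant of the same computation, combined with the Burkholder--Davis--Gundy inequality, gives $E[|V_{u,t}^x - 1|^{2q}]^{1/(2q)} \leq C(Y_u^x \vee 1)^{-\alpha \wedge (\beta/2)}$.

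I would then express the Stein factor via Mehler's formula,
\[
\bigl\langle D(-L)^{-1}(F_t - E[F_t]),\, DF_t\bigr\rangle_H = \int_0^\infty e^{-r}\, \widetilde{E}\!\left[\int_0^t D_u F_t^{(r)}\, D_u F_t\, du\right] dr,
\]
where $F_t^{(r)}$ is $F_t$ evaluated on the rotated Brownian path $e^{-r} B + \sqrt{1-e^{-2r}}\widetilde B$. The decisive observation is that if one replaces $\sigma, b$ by $\sigma_\infty, b_\infty$, then $F_t$ lies in the first Wiener chaos, $(-L)^{-1}$ acts as the identity, and the inner product equals $1$ identically. Subtracting this Gaussian reference and expanding in the small quantities $\sigma(u,X_u^x) - \sigma_\infty(u)$, $V_{u,t}^x - 1$, and their rotated analogues, one obtains on $\{Y_u^x \geq 1\}$ a pointwise majorant of order $(Y_u^x)^{-\alpha} + (Y_u^x)^{-\beta/2}$ via \eqref{eq:ests}--\eqref{eq:estb} and the moment bound above, which integrates in $u$ thanks to $\alpha > 1/2$ and $\beta > 1$; on $\{Y_u^x < 1\}$ the uniform $L^{2q}$-bound on $V_{u,t}^x$ together with Proposition~\ref{prop:asymY}(i) produces an exponentially small tail. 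Combining these contributions with the $1/(t\,\overline{\sigma_\infty}(t)^2)$ prefactor yields the claimed $\mathcal{E}(t) \leq C t^{-1/2}$.

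The main obstacle will be the Mehler/rotation step: it introduces a second independent copy of the SDE driven by the rotated Brownian motion, so the exponential functional $V_{u,t}^x$ and the coefficients $\sigma, b$ must be controlled jointly on both paths. This joint control is exactly where the higher-integrability margin encoded by $q(q+1)/(q-1)$ in \eqref{ass:sb} is essential; a plain $L^2$ bound would not accommodate the coupling with the rotated path. Once this is in place, the remaining work is careful bookkeeping of moments, facilitated throughout by the preliminary estimates in Propositions~\ref{prop:integrable-mtg}--\ref{prop:asymY2} and Corollary~\ref{cor:intY}.
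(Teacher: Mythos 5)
Your high-level strategy is the same as the paper's: apply the Malliavin--Stein bound \eqref{eq:MS2} to $F_t^x$, observe that the Wiener integral $\frac{1}{\overline{\sigma_\infty}(t)\sqrt t}\int_0^t\sigma_\infty(s)\,dB_s$ is a first-chaos eigenfunction on which $(-L)^{-1}$ acts as the identity so that the leading Stein factor is exactly $1$, and control the remainder through the decay estimates \eqref{eq:ests}--\eqref{eq:estb}, the comparison $Y_s^x\le X_s^x$, and the exponential moment bound for $e^{Z_t^x-Z_s^x}$ (your $V_{u,t}^x$) that Assumption~\eqref{ass:sb} is calibrated to provide. Your reduction of $|E[F_t^x]|\le Ct^{-1/2}$ to Theorem~\ref{thm:CLT} is also fine (the paper uses Lemma~\ref{lem:estLpIb}(ii) with $p=1$, which amounts to the same thing).

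Where you part ways with the paper is in how you propose to bound $E\bigl[\bigl|1-\langle D(-L)^{-1}(F_t^x-E[F_t^x]),DF_t^x\rangle_H\bigr|\bigr]$. You unfold $(-L)^{-1}$ via Mehler's formula, which introduces an independent auxiliary Brownian path and an independent copy of the SDE driven by the rotated Brownian motion. You yourself identify the ``joint control'' of the original and rotated paths --- uniform moment bounds for $V_{u,t}^x$ and for $\sigma(u,X_u^x)-\sigma_\infty(u)$ on the coupled pair --- as the main obstacle, and you do not carry it out; you only assert that the margin in \eqref{ass:sb} should make it work. This is the step that is actually hard, and it is left open. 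The paper sidesteps it completely: after writing $F_t^x=\frac{1}{\overline{\sigma_\infty}(t)\sqrt t}\int_0^t\sigma_\infty\,dB+\frac{1}{\sqrt t}R_t^x$, the Stein factor decomposes into $1$, a cross term $\frac{1}{t}\langle h_{\sigma,t},DR_t^x\rangle_H$ that is handled by the explicit formula \eqref{eq:DSh} and Proposition~\ref{prop:estbDXh}, and a quadratic term $\frac{1}{\sqrt t}\langle D(-L)^{-1}(R_t^x-E[R_t^x]),DF_t^x\rangle_H$ that is bounded in $L^1$ by Cauchy--Schwarz together with the operator-norm boundedness of
$(-L)^{-1}\colon\{F\in L^2:E[F]=0\}\to{\cal D}^{2,2}$ and $D\colon{\cal D}^{2,2}\to L^2(\mu;H)$. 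This reduces the quadratic term to $E[|R_t^x-E[R_t^x]|^2]^{1/2}E[|DF_t^x|_H^2]^{1/2}$, both $O(1)$ by Lemma~\ref{lem:estLpIb} and Proposition~\ref{prop:estbDX}, and no second path ever appears. So the paper's route is both shorter and avoids the very difficulty you flag as unresolved.

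Two smaller points. First, your proposed intermediate bound $E[|V_{u,t}^x-1|^{2q}]^{1/(2q)}\le C(Y_u^x\vee 1)^{-\alpha\wedge(\beta/2)}$ cannot hold as written, because the left-hand side is deterministic while the right-hand side is random; you presumably intend a conditional version given ${\cal F}_u$, or an integrated version like the paper's \eqref{eq:DSh-est-2}, which mixes a tail probability $P(\inf_{[s,t]}Y<1)^{1/r}$ with integrals of $|\partial_y\sigma|^{2r}$ and $|\partial_y b|^{2r}$. Second, the theorem requires the bound uniformly in $x\in[x_0,\infty)$ with a constant depending only on $x_0$, not $x$; your sketch mentions $\sup_{x\ge x_0}$ in passing but never traces the uniformity through the coupled-path step, whereas the paper's Propositions~\ref{prop:asymY}, \ref{prop:estbDX}, \ref{prop:estbDXh} are all stated with the supremum built in.

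In summary: the reduction, the first-chaos observation, and the inventory of preliminary estimates are all correct and match the paper; but the Mehler-formula reformulation replaces the paper's clean Cauchy--Schwarz/operator-norm argument with a substantially harder coupled-path analysis that you do not carry out, so as it stands the proposal has a genuine gap at precisely the step you identify as the main obstacle.
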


For the proof of Theorem~\ref{thm:BE1}, we rely on the Malliavin calculus, in particular the Malliavin-Stein method.
Below, for the brevity of notations, 
we denote the derivatives of $\sigma (s,y)$ and $b(s,y)$ with respect to the spatial variable $y$ by $\partial _y \sigma (s,y)$ and $\partial _y b(s,y)$, respectively.
Throughout this section, 
$C$ is a positive constant depending on $\sigma _1$, $\sigma _2$, $\sigma _3$, $b_1$, $b_2$, $b_3$, $\alpha$, $\beta$, $q$ and $x_0$, which can be different from line to line.
When the constant $C$ depends on another parameter, for example $p$, then we denote it by a subscript, like $C_p$.

\subsection{Estimates of some functionals of $X_t^x$}

We prepare some estimates for the proof of Theorem~\ref{thm:BE1}.

\begin{lem}\label{lem:expest}
Let Assumption~{\rm \ref{ass1}} hold.
Then, for any $\tilde{c}_1, \tilde{c}_2\geq 0$ satisfying
\[
M := \sup _{(t,y)\in [0,\infty )\times {\mathbb R}} \left( \tilde{c}_1 [\partial _y \sigma (t,y)]^2 + \tilde{c}_2 \partial _y b(t,y) \right) < \frac{\sigma _1^2 b_1^2}{2},
\]
it holds that
\[
\sup _{x\in [x_0,\infty )} \sup _{s,t\in [0,\infty ); s<t} E\left[ \exp \left( \int _s^t \left( \tilde{c}_1 [\partial _y \sigma (v, X_v^x) ]^2 + \tilde{c}_2 \partial _y b(v, X_v^x) \right) dv \right) \right] \leq C.
\]
\end{lem}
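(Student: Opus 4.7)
The plan is to dominate the random integrand by a deterministic, nonincreasing, integrable function of a simpler process, reduce from $X_v^x$ to $Y_v^x$ via the comparison theorem \eqref{eq:comparison}, and then invoke Corollary~\ref{cor:intY}(i).

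Write $g(v,y) := \tilde{c}_1 [\partial_y \sigma(v,y)]^2 + \tilde{c}_2 \partial_y b(v,y)$. By hypothesis $g \leq M$ everywhere, and from \eqref{ass:s2} and \eqref{ass:b2} one obtains the decay bound $g(v,y) \leq \tilde{c}_1 \sigma_3^2 y^{-2\alpha-2} + \tilde{c}_2 b_3 y^{-\beta-1}$ for $y \geq 1$. Choose
$$F(y) := \begin{cases} M, & y \leq y_0, \\ \tilde{c}_1 \sigma_3^2 y^{-2\alpha-2} + \tilde{c}_2 b_3 y^{-\beta-1}, & y > y_0, \end{cases}$$
where $y_0 := \inf\{y \geq 1 : \tilde{c}_1 \sigma_3^2 y^{-2\alpha-2} + \tilde{c}_2 b_3 y^{-\beta-1} \leq M\}$. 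Since $\tilde{c}_1 \sigma_3^2 y^{-2\alpha-2} + \tilde{c}_2 b_3 y^{-\beta-1}$ is nonincreasing on $[1,\infty)$ and vanishes at infinity, $F$ is nonnegative, bounded with $\sup F = M$, nonincreasing on $\mathbb{R}$, and integrable on $(0,\infty)$ because $\alpha, \beta > 0$. Moreover $g(v,y) \leq F(y)$ for every $(v,y)$.

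Next, by \eqref{eq:comparison} we have $Y_v^x \leq X_v^x$ almost surely, and the monotonicity of $F$ yields $F(X_v^x) \leq F(Y_v^x)$. Therefore, for every $0 \leq s < t$,
$$\int_s^t g(v,X_v^x)\,dv \leq \int_0^\infty F(X_v^x)\,dv \leq \int_0^\infty F(Y_v^x)\,dv,$$
where the extension of the integration range uses the nonnegativity of $F$. Taking the exponential and the expectation reduces the problem to bounding $E[\exp(\int_0^\infty F(Y_v^x)\,dv)]$ uniformly in $x \in [x_0, \infty)$.

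Finally, I would apply Corollary~\ref{cor:intY}(i) to $F$ with $c = 1$. The admissibility condition $c < \frac{\sigma_1^2 b_1^2}{2M}$ reads $M < \frac{\sigma_1^2 b_1^2}{2}$, which is precisely the standing hypothesis; this is exactly where the strict inequality is used. The corollary then delivers the claimed uniform bound. I do not anticipate a substantive obstacle: the lemma is really a packaging of Corollary~\ref{cor:intY}(i), and the only mild care is in designing $F$ so that it is nonincreasing across the cutoff $y_0$, which is automatic from the construction above.
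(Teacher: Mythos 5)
Your proof is correct and rests on the same two ingredients as the paper's — the comparison $Y_v^x \le X_v^x$ from \eqref{eq:comparison} and the exponential-integrability bound of Corollary~\ref{cor:intY}(i) — but organizes them differently. The paper fixes $p>1$ with $pM<\sigma_1^2 b_1^2/2$, sets $q=p/(p-1)$, picks $\delta>0$ with $q\delta<\sigma_1^2 b_1^2/2$, and then splits the exponential via H\"older's inequality into a factor supported on $\{Y_v^x\le L\}$ (controlled by the global bound $M$, with the exponent $p$ absorbing the slack) and a factor supported on $\{Y_v^x>L\}$ (controlled by the decaying tail with prefactor $\delta$, with the conjugate exponent $q$), invoking Corollary~\ref{cor:intY}(i) once for each factor. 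You avoid H\"older entirely: you build a single nonincreasing dominating function $F$ whose supremum is exactly $M$, and apply Corollary~\ref{cor:intY}(i) once with $c=1$; the admissibility condition $1<\sigma_1^2 b_1^2/(2M)$ is then literally the standing hypothesis $M<\sigma_1^2 b_1^2/2$. This is a genuine simplification, since the auxiliary parameters $p,q,\delta,\varepsilon,L$ of the paper's argument all disappear. Your remark about the join is right: continuity of the tail $T(y):=\tilde{c}_1\sigma_3^2 y^{-2\alpha-2}+\tilde{c}_2 b_3 y^{-\beta-1}$ and the definition of $y_0$ give $T(y_0)\le M$, so $F$ drops (or is continuous) across $y_0$ and is nonincreasing globally. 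The only truly degenerate corner is $M\le 0$, where the set defining $y_0$ may be empty; but in that case the integrand is nonpositive, the exponential is bounded by $1$, and the lemma is trivial, so nothing is lost.
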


\begin{proof}
Let $p>1$ satisfy $pM < \frac{\sigma _1^2 b_1^2}{2}$, and let $q:= \frac{p}{p-1}$. 
Take $\delta>0$ so small that $q\delta<\frac{\sigma_1^2b_1^2}{2}$. 
Let $\gamma := \min \{ 2\alpha , \beta\}$. 
Then it follows from \eqref{ass:s2} and \eqref{ass:b2} that, for any $\varepsilon\in (0,\gamma)$, 
there exists $L\in [1,\infty )$ depending on $\sigma _1$, $\sigma _3$, $b_1$ and $b_3$ such that 
\[
\sup _{t\in [0,\infty )} \left( \tilde{c}_1 [\partial _y \sigma (t,y)]^2 + \tilde{c}_2 \partial _y b(t,y) \right) 
\leq \delta y ^{-(\gamma +1-\varepsilon)}, \quad y \in [L,\infty ).
\]
Hence, by \eqref{eq:comparison} and H\"older's inequality, we have
\begin{align*}
&E\left[ \exp \left( \int _s^t \left( \tilde{c}_1 [\partial _y \sigma (v, X_v^x) ]^2 + \tilde{c}_2 \partial _y b(v, X_v^x) \right) dv \right) \right] \\
&\leq E\left[ \exp \left( M \int _s^t {\mathbf 1}_{(-\infty ,L]}(Y_v^x) dv \right) 
\exp \left( \delta \int _s^t \left( X_v^x \right) ^{-(\gamma +1-\varepsilon)} {\mathbf 1}_{(L, \infty )}(Y_v^x) dv \right)\right] \\
&\leq E\left[ \exp \left( M \int _s^t {\mathbf 1}_{(-\infty ,L]}(Y_v^x) dv \right) \exp \left( \delta\int _s^t \left( Y_v^x \right) ^{-(\gamma +1-\varepsilon)} {\mathbf 1}_{(L, \infty )}(Y_v^x) dv \right) \right] \\
&\leq E\left[ \exp \left( pM \int _s^t {\mathbf 1}_{(-\infty ,L]}(Y_v^x) dv \right) \right] ^{\frac{1}{p}} E\left[ \exp \left( \delta q \int _s^t \left( 1+\max\{ Y_v^x, 0\} \right) ^{-(\gamma +1-\varepsilon)} dv \right) \right] ^{\frac{1}{q}} .
\end{align*}
Since $\delta q<\frac{\sigma_1^2 b_1^2}{2}$ and $0<\varepsilon<\gamma$, 
we can apply Corollary~\ref{cor:intY}  to the right-hand side and obtain the assertion. 
\end{proof}

\begin{lem}\label{lem:estLpIb}
Let Assumption~{\rm \ref{ass1}} hold.
\begin{enumerate}
\item[\rm (i)]
Suppose that \eqref{ass:s1} is satisfied with some $\alpha >\frac{1}{2}$. 
Then for any $p\in [1,\infty )$, 
\[
\sup _{x\in [x_0, \infty )} \sup _{t\in [0,\infty )} E\left[ \left| \int _0^t \left( \sigma (s,X_s^x) - \sigma _\infty (s) \right) dB_s \right| ^p \right] \leq C_p .
\]

\item[\rm (ii)]
Suppose that \eqref{ass:b1} is satisfied with some $\beta >1$. 
Then for any $p\in [1,\infty )$, 
\[
\sup _{x\in [x_0, \infty )} \sup _{t\in [0,\infty )} E\left[ \left| \int _0^t \left( b(s,X_s^x) - b_\infty (s) \right) ds \right| ^p \right] \leq C_p .
\]
\end{enumerate}
\end{lem}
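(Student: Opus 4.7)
\textbf{Plan for the proof of Lemma \ref{lem:estLpIb}.} My strategy is to reduce both parts to the moment bounds for $Y_t^x$ furnished by Corollary \ref{cor:intY}(ii), via the comparison inequality \eqref{eq:comparison} and the asymptotic estimates \eqref{eq:ests} and \eqref{eq:estb}. A direct application of Minkowski's inequality in time fails because it would produce a factor growing in $t$; the point is instead to show that the quantities inside are, once integrated to infinity, bounded in $L^p$ uniformly in $x\ge x_0$.

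For part (i), I would first apply the Burkholder--Davis--Gundy inequality to reduce the assertion to showing
\[
\sup_{x\in[x_0,\infty)}E\!\left[\left(\int_0^\infty\bigl(\sigma(s,X_s^x)-\sigma_\infty(s)\bigr)^2ds\right)^{p/2}\right]\le C_p.
\]
Next I would split the integrand using the indicators $\mathbf{1}_{(-\infty,1]}(Y_s^x)$ and $\mathbf{1}_{(1,\infty)}(Y_s^x)$. On $\{Y_s^x\le 1\}$ I use the trivial bound $|\sigma-\sigma_\infty|\le\sigma_2-\sigma_1$, which reduces the contribution to a constant multiple of $\int_0^\infty\mathbf{1}_{(-\infty,1]}(Y_s^x)\,ds$; since $\mathbf{1}_{(-\infty,1]}$ is nonnegative, non-increasing and integrable on $[0,\infty)$, Corollary \ref{cor:intY}(ii) gives the desired $L^{p/2}$ bound. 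On $\{Y_s^x>1\}$, the comparison \eqref{eq:comparison} yields $X_s^x\ge Y_s^x>1$, so by \eqref{eq:ests} the integrand is dominated by $(\sigma_3/\alpha)^2(Y_s^x)^{-2\alpha}\mathbf{1}_{(1,\infty)}(Y_s^x)$; because $\alpha>\tfrac12$, the function $y\mapsto y^{-2\alpha}\mathbf{1}_{(1,\infty)}(y)$ is again non-increasing and integrable on $[0,\infty)$, so Corollary \ref{cor:intY}(ii) applies and closes this case.

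For part (ii), the method is the same but without BDG: I bound the absolute value of the drift integral pathwise by
\[
\int_0^\infty|b(s,X_s^x)-b_\infty(s)|\,ds\le (b_2-b_1)\int_0^\infty\mathbf{1}_{(-\infty,1]}(Y_s^x)\,ds+\frac{b_3}{\beta}\int_0^\infty(Y_s^x)^{-\beta}\mathbf{1}_{(1,\infty)}(Y_s^x)\,ds,
\]
using \eqref{eq:estb} together with \eqref{eq:comparison} on the set $\{Y_s^x>1\}$. The first integral is handled exactly as in (i). For the second, the hypothesis $\beta>1$ is precisely what makes $y\mapsto y^{-\beta}\mathbf{1}_{(1,\infty)}(y)$ integrable on $[0,\infty)$, so Corollary \ref{cor:intY}(ii) again supplies a uniform $L^p$ bound.

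\textbf{Main obstacle.} The only delicate point is treating the region where $X_s^x$ is small, since the decay estimates \eqref{eq:ests}, \eqref{eq:estb} are valid only for $y\ge 1$. The comparison $Y_s^x\le X_s^x$ elegantly sidesteps this: we replace the time spent by $X$ in the ``bad'' region by the (larger) time spent by $Y$, whose uniform-in-$x\ge x_0$ integrability is controlled by the martingale-plus-constant-drift framework of Section \ref{sec:marti+drift}. The sharp integrability thresholds $\alpha>\tfrac12$ and $\beta>1$ appear naturally as the conditions making $y^{-2\alpha}$ and $y^{-\beta}$, respectively, integrable at infinity, which is exactly what the hypotheses of Corollary \ref{cor:intY}(ii) require.
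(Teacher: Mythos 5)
Your proof of part (i) is correct and follows the paper's argument verbatim: Burkholder--Davis--Gundy, then the split by the indicators of $\{Y_s^x\le 1\}$ and $\{Y_s^x>1\}$, the bound \eqref{eq:ests} together with the comparison \eqref{eq:comparison}, and finally Corollary~\ref{cor:intY}(ii). For part (ii) you depart slightly from the paper: you dominate the drift integral pathwise by a deterministic function of the $Y$-path and invoke Corollary~\ref{cor:intY}(ii) directly, whereas the paper invokes the Minkowski-type inequality \eqref{eq:int-norm} to get $\int_0^t E[|b(s,X_s^x)-b_\infty(s)|^p]^{1/p}\,ds$ and then integrates the decaying moment bounds of Proposition~\ref{prop:asymY}. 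Both routes are valid, and yours has the small advantage of treating (i) and (ii) by the same single tool. One caveat: your opening claim that ``a direct application of Minkowski's inequality in time fails because it would produce a factor growing in $t$'' is an overstatement. For (ii), the integrand $E[|b(s,X_s^x)-b_\infty(s)|^p]^{1/p}$ decays like $s^{-\beta}$ for $s\ge 1$ by Proposition~\ref{prop:asymY}, and since $\beta>1$ the Minkowski estimate stays bounded in $t$ --- this is exactly the route the authors take. The claim should be softened (say, ``would require the supplementary decay estimates of Proposition~\ref{prop:asymY} to remain bounded''). A final pedantic note shared with the paper's own writeup: the function $y\mapsto y^{-2\alpha}\mathbf{1}_{(1,\infty)}(y)$ (and its analogue with $\beta$) is not literally non-increasing because of the jump at $y=1$; to apply Corollary~\ref{cor:intY}(ii) as stated one should replace it by the dominating non-increasing function $(1\vee y)^{-2\alpha}$, which has the same integrability properties.
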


\begin{proof}
Note that \eqref{eq:ests} holds with $\alpha>\frac{1}{2}$.
From the Burkholder-Davis-Gundy inequality, \eqref{ass:s1}, \eqref{eq:ests} and \eqref{eq:comparison} we have  
\begin{align*}
&E\left[ \left| \int _0^t \left( \sigma (s,X_s^x) - \sigma _\infty (s) \right) dB_s \right| ^p \right] \\
&\leq C_p E\left[ \left( \int _0^t \left( \sigma (s,X_s^x) - \sigma _\infty (s) \right) ^2 ds \right) ^{\frac{p}{2}} \right] \\
&\leq C_p \left( \frac{\sigma _3^p}{\alpha ^p} E\left[ \left( \int _0^t (X_s^x)^{-2\alpha} {\bf 1}_{[1,\infty )}(Y_s^x) ds \right) ^{\frac{p}{2}} \right]  + (2\sigma _2)^p E\left[ \left( \int _0^t {\bf 1}_{(-\infty , 1)}(Y_s^x) ds \right) ^{\frac{p}{2}} \right] \right) \\
&\leq C_p  \left( \frac{\sigma _3^p}{\alpha ^p} E\left[ \left( \int _0^t (Y_s^x)^{-2\alpha} {\bf 1}_{[1,\infty )}(Y_s^x) ds \right) ^{\frac{p}{2}} \right]  + (2\sigma _2)^p E\left[ \left( \int _0^t {\bf 1}_{(-\infty , 1)}(Y_s^x) ds \right) ^{\frac{p}{2}} \right] \right) .
\end{align*}
Therefore, (i) follows from Corollary~\ref{cor:intY}.

Since the proof of (ii) is similar by using \eqref{eq:int-norm}, we omit it. 
\end{proof}

\subsection{Estimates of Malliavin derivatives}\label{subsec:MCbdd}

In this section, we apply the Malliavin calculus to the SDE \eqref{eq:SDE1}. 
Let
\begin{align*}
W&:= \left\{ w\in C([0,\infty); {\mathbb R});\ w(0)=0, \ \lim _{t\rightarrow \infty}\frac{w(t)}{t}=0 \right\}, \\
\| w\| _W&:= \sup _{t\geq 0}\frac{|w(t)|}{1+t} , \quad w\in W.
\end{align*}
Then, $(W, \| \cdot \| _W)$ is a separable Banach space and the Wiener measure $\mu$ is defined on $W$ (see \cite[Section 1.3]{DS89}).
Moreover, if we set
\begin{align*}
&H:= \left\{ h\in W; \ \mbox{there exists}\ \dot{h}\in L^2([0,\infty), dt)\ \mbox{such that}\ h(t) = \int _0^t \dot{h}(s)ds\ (t\geq 0) \right\}, \\
&\| h\| _H:= \| \dot{h}\| _{L^2([0,\infty), dt)}, \quad h\in H \ (\dot{h} \ \mbox{is the one appeared in the definition of}\ H),
\end{align*}
then $H$ is the Cameron-Martin space associated with $(W,\mu)$ (see \cite[Section 8.1]{St11}).

The pathwise uniqueness of the SDE \eqref{eq:SDE1} implies that the solution $X^x$ is regarded as a functional of the driving Brownian motion $B_t$ and the Malliavin calculus is applicable under the Wiener space generated by $B_t$.
For the unity of notations, we let $\Omega = W$ and $P =\mu$, and apply the Malliavin calculus to the functionals of $B_t$ with the Cameron-Martin space $H$.

By using Lemma~\ref{lem:expest}, we prove the next two propositions, which provide the bounds on the expectation related to the Malliavin derivative $DX^x$ of $X^x$.  

\begin{prop}\label{prop:estbDX}
Let Assumption~{\rm \ref{ass1}} and \eqref{ass:sb} hold. 
Let $x_0 \in {\mathbb R}$ and let
\[
S_t^x:= \int _0^t (\sigma (s,X_s^x) -\sigma _\infty (s))dB_s + \int _0^t (b(s,X_s^x ) -b_\infty (s)) ds.
\]
\begin{enumerate}
\item[{\rm (i)}] It holds that for any $t\in [0,1]$, 
\[
\sup _{x\in [x_0,\infty )} E \left[ \left| D S_t^x \right| _H^2 \right] \leq Ct.
\]
\item[{\rm (ii)}] It holds that for $t\ge 1$, 
\[
\sup _{x\in [x_0,\infty )} E \left[ \left| D S_t^x \right| _H^2 \right]
\le 
C\times 
\begin{dcases}
t^{1-2(\alpha \wedge \beta)} & \left( 0< \alpha \wedge \beta < \frac{1}{2}\right), \\
1+\log t & \left( \alpha \wedge \beta = \frac{1}{2} \right), \\
1 & \left( \alpha \wedge \beta > \frac{1}{2}\right) .
\end{dcases}
\]
\end{enumerate}
\end{prop}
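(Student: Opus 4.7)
The plan is to first compute the Malliavin derivative of $X_t^x$ explicitly via the variational (linearised) SDE, then reduce the $H$-norm estimate to moment bounds on the variational process and decay estimates on $\partial_y\sigma(s,X_s^x)$ and $\partial_y b(s,X_s^x)$.

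Concretely, the variational equation for $DX^x$ is linear and its fundamental solution is
\[
\mathcal{J}_r^t:=\exp\!\left(\int_r^t \partial_y\sigma(s,X_s^x)\,dB_s-\tfrac12\int_r^t[\partial_y\sigma(s,X_s^x)]^2\,ds+\int_r^t\partial_y b(s,X_s^x)\,ds\right),
\]
so that $D_rX_t^x=\sigma(r,X_r^x)\mathcal{J}_r^t$ for $r\le t$. Since $S_t^x=X_t^x-x-\int_0^t\sigma_\infty(s)\,dB_s-\int_0^t b_\infty(s)\,ds$, this gives
\[
D_rS_t^x=(\sigma(r,X_r^x)-\sigma_\infty(r))\mathcal{J}_r^t+\sigma_\infty(r)(\mathcal{J}_r^t-1),\qquad r\le t,
\]
and $D_rS_t^x=0$ for $r>t$, so $E|DS_t^x|_H^2=\int_0^t E[(D_rS_t^x)^2]\,dr$.

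The central step is a uniform moment bound $\sup_{r\le t,\,x\ge x_0}E[(\mathcal{J}_r^t)^{2k}]\le C$ for $k$ slightly larger than $1$. For this I would use the identity
\[
(\mathcal{J}_r^t)^{2k}=\exp\!\left(c\!\int_r^t\!\partial_y\sigma\,dB-\tfrac{c^2}{2}\!\int_r^t[\partial_y\sigma]^2\,ds\right)\cdot\exp\!\left(k(c-1)\!\int_r^t[\partial_y\sigma]^2ds+2k\!\int_r^t\partial_y b\,ds\right)
\]
with $c=2kp$, then apply Hölder with conjugate exponents $p,q=p/(p-1)$. The first factor is the Doléans exponential of $c\int\partial_y\sigma\,dB$ which has expectation $1$ (Novikov is trivial since $\partial_y\sigma$ is bounded by \eqref{ass:s2}), and the $q$-th power of the second factor has exponent $qk(c-1)[\partial_y\sigma]^2+2qk\,\partial_y b$. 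For $k=1$ and $p=q/(q-1)$ one computes $c-1=(q+1)/(q-1)$, so the coefficients are exactly $q(q+1)/(q-1)$ and $2q$, matching \eqref{ass:sb}; Lemma~\ref{lem:expest} then yields the bound for $k=1$, and the strict inequality in \eqref{ass:sb} gives headroom for some $k>1$. Identifying this Hölder/Girsanov-style decomposition whose coefficients coincide with those in \eqref{ass:sb} is the one non-routine ingredient.

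With this in hand I would estimate the two pieces of $D_rS_t^x$ for $r\ge 1$. Hölder together with \eqref{eq:ests}, the comparison $X_r^x\ge Y_r^x$, the moment bound on $\mathcal{J}_r^t$, and Proposition~\ref{prop:asymY} applied to $Y_r^x$ yields $E[(\sigma(r,X_r^x)-\sigma_\infty(r))^2(\mathcal{J}_r^t)^2]\le Cr^{-2\alpha}$. For the second piece I use
\[
\mathcal{J}_r^t-1=\int_r^t\mathcal{J}_r^s\partial_y\sigma(s,X_s^x)\,dB_s+\int_r^t\mathcal{J}_r^s\partial_y b(s,X_s^x)\,ds;
\]
Itô isometry on the martingale part and Minkowski's inequality \eqref{eq:int-norm} on the drift part, combined with Hölder, \eqref{ass:s2}, \eqref{ass:b2}, comparison, and Proposition~\ref{prop:asymY}, give the decay $E[(\mathcal{J}_r^t-1)^2]\le C\bigl(r^{-(2\alpha+1)}+r^{-2\beta}\bigr)$. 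Adding the two pieces and noting $2\alpha+1>2\alpha$ yields
\[
E[(D_rS_t^x)^2]\le C(r\vee 1)^{-2(\alpha\wedge\beta)}\qquad(r\ge 0).
\]
Integrating over $[0,t]$ proves part~(i) immediately (the integrand is bounded by $C$, integrated over $[0,t]\subset[0,1]$), and for part~(ii) the integral $\int_1^t r^{-2(\alpha\wedge\beta)}\,dr$ splits into the three regimes $2(\alpha\wedge\beta)>1$, $=1$, $<1$, reproducing the three cases in the statement. The main obstacle is the Hölder decomposition matching \eqref{ass:sb} exactly; the rest is a careful but standard bookkeeping of decay estimates.
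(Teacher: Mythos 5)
Your proof is correct and follows essentially the same route as the paper: the $\langle DS_t^x,h\rangle_H$/Parseval computation in the paper is equivalent to your $E|DS_t^x|_H^2=\int_0^t E[(D_rS_t^x)^2]\,dr$ formulation, the splitting of $D_rS_t^x$ into $(\sigma-\sigma_\infty)\mathcal{J}_r^t+\sigma_\infty(\mathcal{J}_r^t-1)$ is the same decomposition used after \eqref{eq:DSh-norm}, the It\^o identity for $\mathcal{J}_r^t-1$ is exactly the one used to obtain \eqref{eq:DSh-est-2}, and the decay bookkeeping via Proposition~\ref{prop:asymY} and comparison matches \eqref{eq:asys}--\eqref{eq:asyb}. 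The one organizational difference is that you isolate a single uniform moment bound $\sup_{r\le t,\,x\ge x_0}E[(\mathcal{J}_r^t)^{2k}]\le C$ for some fixed $k>1$ and then apply ordinary two-way H\"older at each step, whereas the paper applies a three-way H\"older (with exponents $p,q,r$ satisfying $p^{-1}+q^{-1}+r^{-1}=1$) inline in each expectation, as in \eqref{eq:DSh-est-}. Both work; your version is a little more modular, the paper's avoids the continuity/headroom argument needed to go from $k=1$ to $k>1$.

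One thing to fix: the displayed Girsanov factorization is not quite right as written. With $c=2kp$ the martingale part of the exponent on the right is $2kp\int\partial_y\sigma\,dB$, which does not match the $2k\int\partial_y\sigma\,dB$ on the left, so the claimed identity fails; and if instead you take $c=2k$, the first factor raised to the $p$-th power is $\exp\bigl(2kp\int\partial_y\sigma\,dB-2k^2p\int[\partial_y\sigma]^2\,ds\bigr)$, which is not the Dol\'eans exponential of $2kp\int\partial_y\sigma\,dB$. The correct way to set it up (this is precisely what the paper does before \eqref{eq:DSh-est-}) is to anticipate the $p$-th power: write
\[
(\mathcal{J}_r^t)^{2k}=\exp\Bigl(2k\int_r^t\partial_y\sigma\,dB-2k^2p\int_r^t[\partial_y\sigma]^2\,ds\Bigr)\cdot\exp\Bigl((2k^2p-k)\int_r^t[\partial_y\sigma]^2\,ds+2k\int_r^t\partial_y b\,ds\Bigr),
\]
so that the first factor's $p$-th power is the Dol\'eans exponential of $2kp\int\partial_y\sigma\,dB$ and H\"older with conjugate exponents $p,q$ produces the constraint $\sup\bigl(qk(2kp-1)[\partial_y\sigma]^2+2kq\,\partial_y b\bigr)<\sigma_1^2b_1^2/2$. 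At $k=1$, $p=q/(q-1)$ this is exactly $q(q+1)/(q-1)$ and $2q$, i.e., \eqref{ass:sb}, which is what you stated at the end of that paragraph, so your conclusion is right even though the intermediate formula is garbled. With that repair the argument is complete and agrees with the paper's proof.
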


\begin{proof}
We first compute $\left| D S_t^x \right| _H^2 $.
For $h\in H$, since
\[ \left\{ \begin{array}{rl}
\displaystyle d\langle DX_t^x , h \rangle _H \!\!\!\!\! &\displaystyle = \partial _y \sigma (t,X_t^x) \langle DX_t^x , h \rangle _H dB_t + \sigma (t,X_t^x) dh(t) \\
\displaystyle &\displaystyle \quad \hspace{4cm} + \partial _y b(t,X_t^x) \langle DX_t^x , h \rangle _H dt, \\
\displaystyle DX_0^x \!\!\!\!\! &\displaystyle =0
\end{array}\right.
\]
(see \cite[Theorem 2.2.1]{Nu06} or \cite[Proposition 6.1]{Shi04}), we have
\begin{equation}\label{eq:DXh}
\langle DX_t^x , h \rangle _H = e^{Z_t^x} \int _0^t e^{-Z_s^x} \sigma (s,X_s^x) \dot{h}(s)ds
\end{equation}
where
\begin{equation}\label{eq:z-def}
Z_t^x := \int _0^t \partial _y \sigma (s,X_s^x)dB_s + \int _0^t \partial _y b(s,X_s^x)ds -\frac{1}{2} \int _0^t [\partial _y \sigma (s,X_s^x)]^2 ds
\end{equation}
(see \cite[Section 5.6.C]{KS98}).
On the other hand, since \eqref{eq:SDE1} implies that
\[
S_t^x = X_t^x - \int _0^t \sigma _\infty (s)dB_s ,
\]
it holds that
\[
\langle DS_t^x , h \rangle _H = \langle DX_t^x , h \rangle _H - \int _0^t \sigma _\infty (s) dh(s), \quad h\in H.
\]
Hence, we have
\begin{equation}\label{eq:DSh}
\begin{split}
\langle DS_t^x , h \rangle _H 
&= e^{Z_t^x} \int _0^t e^{-Z_s^x} \sigma (s,X_s^x)\dot{h}(s)ds - \int _0^t \sigma _\infty (s) \dot{h}(s) ds \\
&=\int _0^t \left(e^{Z_t^x-Z_s^x} \sigma (s,X_s^x) - \sigma _\infty (s) \right) \dot{h}(s) ds, \quad h\in H. 
\end{split}
\end{equation}
From this equality, if $\{ h_i\}$ is a complete orthonormal system of $H$, 
then 
\begin{align*}
\left| D S_t^x \right| _H^2
& = \sum _{i=1}^\infty \left| \langle DS_t^x , h_i \rangle _H \right|  ^2 \\
& = \sum _{i=1}^\infty \left| \int _0^\infty \left( e^{Z_t^x -Z_s^x} \sigma (s,X_s^x) - \sigma _\infty (s) \right) {\bf 1}_{[0,t]}(s) \dot{h}_i(s) ds \right| ^2.
\end{align*}
Moreover, we have by the Parseval equality,  
\begin{equation*}
\begin{split}
\left| D S_t^x \right| _H^2
& = \int _0^\infty \left( e^{Z_t^x -Z_s^x} \sigma (s,X_s^x) - \sigma _\infty (s) \right) ^2 {\bf 1}_{[0,t]}(s) ds \\
&= \int _0^t \left( e^{Z_t^x -Z_s^x} \sigma (s,X_s^x) - \sigma _\infty (s) \right) ^2 ds
\end{split}
\end{equation*}
and so 
\begin{equation}\label{eq:DSh-norm}
E\left[\left| D S_t^x \right| _H^2\right] = \int _0^t E \left[ \left( e^{Z_t^x -Z_s^x} \sigma (s,X_s^x) - \sigma _\infty (s) \right) ^2 \right] ds.
\end{equation}

Now we prove (i).
Since
\begin{equation*}\begin{array}{l}
\displaystyle \left( e^{Z_t^x -Z_s^x} \sigma (s,X_s^x) - \sigma _\infty (s) \right) ^2 \\
\displaystyle \leq 2 \left( \sigma (s,X_s^x) - \sigma _\infty (s) \right) ^2 e^{2(Z_t^x-Z_s^x)} + 2\sigma _\infty (s) ^2 \left( e^{Z_t^x-Z_s^x} -1 \right) ^2, 
\end{array}\end{equation*}
it follows by \eqref{eq:DSh-norm} that 
\begin{equation}\label{eq:estDS2}
\begin{split}
&E\left[\left| D S_t^x \right| _H^2\right]  \\
& \leq 2\int_0^t E \left[ \left( \sigma (s,X_s^x) - \sigma _\infty (s) \right) ^2 e^{2(Z_t^x-Z_s^x)} \right] ds 
 + 2\sigma_2^2 \int _0^t E\left[ \left( e^{Z_t^x-Z_s^x} -1 \right) ^2 \right] ds.
\end{split}
\end{equation}
In view of \eqref{ass:sb}, we are able to choose $p,q\in (1,\infty )$ so that $p^{-1}+q^{-1}<1$ and
\[
\sup _{(t,y)\in [0,\infty )\times {\mathbb R}} \left( q(2p-1) [\partial _y \sigma (t,y)]^2 +2q \partial _y b(t,y) \right) < \frac{\sigma _1^2 b_1^2}{2} .
\]
Let $r:= \frac{pq}{pq-p-q} \in (1,\infty)$ so that $p^{-1}+q^{-1}+r^{-1}=1$.
Since H\"older's inequality implies that
\begin{align*}
&E\left[ \left( \sigma (s,X_s^x) - \sigma _\infty (s)\right) ^2 e^{2(Z_t^x-Z_s^x)} \right] \\
&= E\left[ \left( \sigma (s,X_s^x) - \sigma _\infty (s) \right) ^2 \exp \left( 2 \int _s^t \partial _y \sigma (u,X_u^x)dB_u - 2p \int _s^t [\partial _y \sigma (u,X_u^x)]^2 du \right) \right. \\
&\quad \hspace{4cm} \left. \times \exp \left( (2p -1)\int _s^t [\partial _y \sigma (s,X_s^x)]^2 ds + 2 \int _s^t \partial _y b(u,X_u^x)du \right) \right] \\
&\leq E\left[ \exp \left( 2p \int _s^t \partial _y \sigma (u,X_u^x)dB_u - 2p^2 \int _s^t [\partial _y \sigma (u,X_u^x)]^2 du \right) \right] ^{\frac{1}{p}}\\
&\quad \times E\left[ \exp \left( q (2p -1)\int _s^t [\partial _y \sigma (u,X_u^x)]^2 du + 2 q \int _s^t \partial _y b(u,X_u^x)du \right) \right] ^{\frac{1}{q}} \\
&\quad \times E\left[ \left| \sigma (s,X_s^x) - \sigma _\infty (s) \right| ^{2r} \right] ^{\frac{1}{r}} ,
\end{align*}
by the Girsanov transform and Lemma~\ref{lem:expest} we have
\begin{equation}\label{eq:DSh-est-}
\sup _{x\in [x_0,\infty )} E\left[ \left( \sigma (s,X_s^x) - \sigma _\infty (s) \right) ^2 e^{2(Z_t^x-Z_s^x)} \right] \leq C E\left[ \left| \sigma (s,X_s^x) - \sigma _\infty (s) \right| ^{2r} \right] ^{\frac{1}{r}} .
\end{equation}
Similarly, we have
\[
\sup _{x\in [x_0,\infty )} E\left[ e^{2(Z_t^x-Z_s^x)} \right] \leq C .
\]
This equality, \eqref{ass:s1}, \eqref{eq:DSh-norm}, \eqref{eq:estDS2} and \eqref{eq:DSh-est-} yield (i).

Next we prove (ii).
Note that similarly to \eqref{eq:DSh-est-} it holds that
\begin{equation}\label{eq:DSh-est-3}
\begin{split}
\sup _{x\in [x_0,\infty )} E\left[ e^{2(Z_t^x-Z_s^x)} {\bf 1}_{(-\infty ,1)}\left( \inf _{v\in [s,t]} Y_v^x\right) \right] 
&\leq C E\left[ {\bf 1}_{(-\infty ,1)}\left( \inf _{v\in [s,t]} Y_v^x\right) \right] ^{\frac{1}{r}} \\
&=C P\left( \inf_{v\in [s,t]} Y_v^x <1 \right)^{\frac{1}{r}}.
\end{split}
\end{equation}
Since It\^o's formula implies that
\[
e^{Z_t^x} - e^{Z_s^x} = \int _s^t e^{Z_u^x} \partial _y \sigma (u,X_u^x) dB_u + \int _s^t e^{Z_u^x} \partial _y b(u,X_u^x)du ,
\]
from \eqref{eq:int-norm} and \eqref{eq:DSh-est-3} it holds that
\begin{align*}
&E\left[ \left( e^{Z_t^x-Z_s^x} -1 \right) ^2\right] \\
&= E\left[ \left( e^{Z_t^x-Z_s^x} -1 \right) ^2 {\bf 1}_{[1,\infty )}\left( \inf _{v\in [s,t]} Y_v^x\right) \right] + E\left[ \left( e^{Z_t^x-Z_s^x} -1 \right) ^2 {\bf 1}_{(-\infty ,1)}\left( \inf _{v\in [s,t]} Y_v^x\right)\right] \\
&\leq 2 E\left[ \left( \int _s^t e^{Z_u^x -Z_s^x} \partial _y \sigma (u,X_u^x) dB_u \right) ^2 {\bf 1}_{[1,\infty )}\left( \inf _{v\in [s,t]} Y_v^x\right) \right] \\
&\quad + 2 E\left[ \left( \int _s^t e^{Z_u^x -Z_s^x} \partial _y b(u,X_u^x)du \right) ^2 {\bf 1}_{[1,\infty )}\left( \inf _{v\in [s,t]} Y_v^x\right) \right] + C P\left( \inf _{v\in [s,t]} Y_v^x <1 \right) ^{\frac{1}{r}}\\
&\leq 2 \int _s^t E\left[ e^{2(Z_u^x -Z_s^x)} |\partial _y \sigma (u,X_u^x) |^2 \right] du \\
&\quad + 2 \left( \int _s^t E\left[ e^{2(Z_u^x -Z_s^x)} |\partial _y b(u,X_u^x)|^2 {\bf 1}_{[1,\infty )}\left( Y_u^x\right) \right] ^{\frac{1}{2}} du \right) ^2 + C P\left( \inf _{v\in [s,t]} Y_v^x <1 \right) ^{\frac{1}{r}}.
\end{align*}
Similarly to the proof of \eqref{eq:DSh-est-}, we also have
\begin{align*}
\sup _{x\in [x_0,\infty )} E\left[ e^{2(Z_u^x -Z_s^x)} |\partial _y \sigma (u,X_u^x) |^2 \right] &\leq C E\left[ \left| \partial _y \sigma (u,X_u^x) \right| ^{2r} \right] ^{\frac{1}{r}} ,\\
\sup _{x\in [x_0,\infty )} E\left[ e^{2(Z_u^x -Z_s^x)} |\partial _y b(u,X_u^x)|^2 {\bf 1}_{[1,\infty )}\left( Y_u^x\right) \right] &\leq C E\left[ \left| \partial _y b(u,X_u^x) \right| ^{2r} {\bf 1}_{[1,\infty )}\left( Y_u^x\right) \right] ^{\frac{1}{r}} .
\end{align*}
Thus, we have
\begin{equation}\label{eq:DSh-est-2}\begin{array}{l}
\displaystyle \sup _{x\in [x_0,\infty )} E\left[ \left( e^{Z_t^x-Z_s^x} -1 \right) ^2\right] \\
\displaystyle \leq C \left( P\left( \inf _{v\in [s,t]} Y_v^x <1 \right) ^{\frac{1}{r}} + \int _s^t E\left[ \left| \partial _y \sigma (u,X_u^x) \right| ^{2r} \right] ^{\frac{1}{r}} du \right. \\
\displaystyle \quad \hspace{2cm} \left. + \left( \int _s^t E\left[ \left| \partial _y b(u,X_u^x) \right| ^{2r} {\bf 1}_{[1,\infty )}\left( Y_u^x\right) \right] ^{\frac{1}{2r}} du \right) ^2 \right) .
\end{array}\end{equation}
This equality, \eqref{ass:s1}, \eqref{eq:DSh-norm}, \eqref{eq:estDS2}, \eqref{eq:DSh-est-} and Proposition~\ref{prop:asymY}(i) yield
\begin{equation}\label{eq:DSh-est}\begin{array}{l}
\displaystyle \sup _{x\in [x_0,\infty )} E\left[ \left| D S_t^x \right| _H^2 \right] \\
%
%
\displaystyle \leq C \left( 1+ \int _0^t E\left[ \left| \sigma (s,X_s^x) - \sigma _\infty (s) \right| ^{2r} \right] ^{\frac{1}{r}} ds + \int _0^t \int _s^t E\left[ \left| \partial _y \sigma (u,X_u^x) \right| ^{2r} \right] ^{\frac{1}{r}} du ds\right. \\
\displaystyle \quad \hspace{3.5cm} \left. + \int _0^t \left( \int _s^t E\left[ \left| \partial _y b(u,X_u^x) \right| ^{2r} {\bf 1}_{[1,\infty )}\left( Y_u^x\right) \right] ^{\frac{1}{2r}} du \right) ^2 ds \right) .
\end{array}\end{equation}
From \eqref{eq:ests} and \eqref{eq:comparison} we have
\begin{align*}
\left| \sigma (s,X_s^x) - \sigma _\infty (s) \right| ^{2r} &= \left| \sigma (s,X_s^x) - \sigma _\infty (s) \right| ^{2r} \left( {\bf 1}_{(-\infty ,1]}(Y_s^x) + {\bf 1}_{(1,\infty )}(Y_s^x) \right) \\
&\leq (2 \sigma _2)^{2r} {\bf 1}_{(-\infty ,1]}(Y_s^x) + \left( \frac{\sigma _3}{\alpha}\right) ^{2r} (X_s^x)^{-2\alpha r} {\bf 1}_{(1,\infty )}(Y_s^x)\\
&\leq (2 \sigma _2)^{2r} {\bf 1}_{(-\infty ,1]}(Y_s^x) + \left( \frac{\sigma _3}{\alpha}\right) ^{2r} (Y_s^x)^{-2\alpha r} {\bf 1}_{(1,\infty )}(Y_s^x) .
\end{align*}
Hence, Proposition~\ref{prop:asymY} yields
\begin{equation}\label{eq:asys}
E\left[ \left| \sigma (s,X_s^x) - \sigma _\infty (s) \right| ^{2r} \right] ^{\frac{1}{r}} \leq C\exp \left( -\frac{b_1^2 \sigma _1^2}{16 \sigma _2^4 r} s\right) + C s^{-2\alpha}. 
\end{equation}
By \eqref{ass:s2} and \eqref{eq:comparison} we have
\begin{align*}
\left| \partial _y \sigma (u,X_u^x) \right| ^{2r} &= \left| \partial _y \sigma (u,X_u^x) \right| ^{2r} \left( {\bf 1}_{(-\infty ,1]}(Y_u^x) + {\bf 1}_{(1,\infty )}(Y_u^x) \right) \\
&\leq \sigma _3^{2r} {\bf 1}_{(-\infty ,1]}(Y_u^x) + \sigma _3 ^{2r} (X_u^x)^{-2 (\alpha +1) r} {\bf 1}_{(1,\infty )}(Y_u^x)\\
&\leq \sigma _3^{2r} {\bf 1}_{(-\infty ,1]}(Y_u^x) + \sigma _3 ^{2r} (Y_u^x)^{-2 (\alpha +1) r} {\bf 1}_{(1,\infty )}(Y_u^x) .
\end{align*}
Hence, Proposition~\ref{prop:asymY} yields
\begin{equation}\label{eq:asys2}
\int _s^t E\left[ \left| \partial _y \sigma (u,X_u^x) \right| ^{2r} \right] ^{\frac{1}{r}} du \leq C\exp \left( -\frac{b_1^2 \sigma _1^2}{16 \sigma _2^4 r} s\right) + Cs^{-2\alpha -1}.
\end{equation}
Similarly, from \eqref{ass:b2}, \eqref{eq:comparison} and Proposition~\ref{prop:asymY} we have
\begin{equation}\label{eq:asyb}
\int _s^t E\left[ \left| \partial _y b(u,X_u^x) \right| ^{2r} {\bf 1}_{[1,\infty )}\left( Y_u^x\right) \right] ^{\frac{1}{2r}} du \leq Cs^{-\beta}.
\end{equation}
From \eqref{eq:DSh-est}, \eqref{eq:asys}, \eqref{eq:asys2} and \eqref{eq:asyb}, we have (ii).
\end{proof}

\begin{prop}\label{prop:estbDXh}
Let Assumption~{\rm \ref{ass1}} and \eqref{ass:sb} hold. 
Let $x_0 \in {\mathbb R}$, and define $S_t^x$ as in Proposition~{\rm \ref{prop:estbDX}}.
\begin{enumerate}
\item[{\rm (i)}] 
It holds that
\[
\sup _{x\in [x_0,\infty )} E \left[ \left| \left\langle \int _0^\cdot \sigma _\infty (s) {\mathbf 1}_{[0,t]}(s) ds , DS_t^x \right\rangle _H \right| \right] \leq C t, \quad t\in [0,1]
\]
where $\int _0^\cdot \sigma _\infty (s) {\mathbf 1}_{[0,t]}(s) ds\in H$ is a function given by
\[
u\mapsto \int _0^u \sigma _\infty (s) {\mathbf 1}_{[0,t]}(s) ds.
\]

\item[{\rm (ii)}]
It holds that for $t\geq 1$, 
\[
\sup _{x\in [x_0,\infty )} E \left[ \left| \left\langle \int _0^\cdot \sigma _\infty (s) {\mathbf 1}_{[0,t]}(s) ds , DS_t^x \right\rangle _H \right| \right] \leq C \times 
\begin{dcases}
t^{1-(\alpha \wedge \beta)} & \left( 0< \alpha \wedge \beta < 1\right), \\
1+\log t & \left( \alpha \wedge \beta = 1 \right), \\
1 & \left( \alpha \wedge \beta > 1\right) .
\end{dcases}
\]
\end{enumerate}
\end{prop}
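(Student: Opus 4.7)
The plan is to exploit the explicit expression for $\langle DS_t^x, h\rangle_H$ obtained in the proof of Proposition~\ref{prop:estbDX}. Taking $h = \int_0^\cdot \sigma_\infty(s)\mathbf{1}_{[0,t]}(s)\,ds$, so that $\dot h(s) = \sigma_\infty(s)\mathbf{1}_{[0,t]}(s)$ and $\|h\|_H^2 = \int_0^t \sigma_\infty(s)^2\,ds \leq \sigma_2^2 t$, formula \eqref{eq:DSh} yields
\[
\langle h, DS_t^x\rangle_H = \int_0^t \bigl(e^{Z_t^x - Z_s^x}\sigma(s, X_s^x) - \sigma_\infty(s)\bigr)\sigma_\infty(s)\,ds.
\]

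For part (i), a single application of Cauchy--Schwarz at the Hilbert space level, $E[|\langle h, DS_t^x\rangle_H|] \leq \|h\|_H\, E[\|DS_t^x\|_H^2]^{1/2}$, combined with $\|h\|_H \leq \sigma_2\sqrt{t}$ and Proposition~\ref{prop:estbDX}(i) gives the bound $\sigma_2\sqrt{t}\cdot C\sqrt{t}=Ct$ immediately.

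For part (ii), the same Hilbert-level Cauchy--Schwarz would saddle us with an extra $\sqrt t$ factor; this is sharp when $\alpha\wedge\beta<\tfrac12$ but too crude when $\alpha\wedge\beta>\tfrac12$. The fix is to push the absolute value inside the time integral via the triangle inequality, and to decompose the integrand as
\[
e^{Z_t^x - Z_s^x}\sigma(s, X_s^x) - \sigma_\infty(s) = \bigl(e^{Z_t^x - Z_s^x} - 1\bigr)\sigma(s, X_s^x) + \bigl(\sigma(s, X_s^x) - \sigma_\infty(s)\bigr).
\]
For the second summand, \eqref{eq:ests}, \eqref{eq:comparison} together with Proposition~\ref{prop:asymY}(i),(ii) deliver $E[|\sigma(s, X_s^x)-\sigma_\infty(s)|] \leq Ce^{-cs} + Cs^{-\alpha}$ for $s\geq 1$, whose time integral produces the $\alpha$-contribution to the final rate. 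For the first summand, apply the scalar Cauchy--Schwarz $E[|e^{Z_t^x - Z_s^x}-1|] \leq E[(e^{Z_t^x - Z_s^x}-1)^2]^{1/2}$ and then recycle the bound \eqref{eq:DSh-est-2} proved inside Proposition~\ref{prop:estbDX}(ii); combined with \eqref{eq:asys2}, \eqref{eq:asyb}, and Proposition~\ref{prop:asymY}(i), this yields $E[(e^{Z_t^x - Z_s^x}-1)^2] \leq C(e^{-cs/r} + s^{-2\alpha-1} + s^{-2\beta})$ for $s\geq 1$, so that
\[
E\bigl[|e^{Z_t^x - Z_s^x}-1|\bigr] \leq C\bigl(e^{-cs/(2r)} + s^{-\alpha-\frac{1}{2}} + s^{-\beta}\bigr).
\]

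Integrating the three surviving power decays $s^{-\alpha}$, $s^{-\alpha-\frac{1}{2}}$ and $s^{-\beta}$ over $s\in[0,t]$ and taking the dominant term in each regime produces exactly the three-case bound stated. The main obstacle is the case-splitting bookkeeping: I need to verify that across every combination of $\alpha$ and $\beta$ the overall rate is controlled by $\alpha\wedge\beta$ rather than by an artefact such as $\alpha+\tfrac12$ coming from the $e^{Z}-1$ piece, and that the logarithmic corrections appear precisely at the critical exponent $\alpha\wedge\beta=1$ (and only there). Since $\alpha+\tfrac12\geq \alpha\wedge\beta+\tfrac12$ is strictly better whenever $\alpha\wedge\beta<\tfrac12$, and the $\sigma$-direct piece alone already contributes the dominant $t^{1-\alpha}$ (respectively $t^{1-\beta}$ through the $b$-piece), the final maximum does coincide with $\min\{1-\alpha,1-\beta,0\}$ with the correct logarithmic corrections, closing the proof.
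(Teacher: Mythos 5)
Your proposal is correct and follows the same overall blueprint as the paper --- use the explicit formula \eqref{eq:DSh} for $\langle h,DS_t^x\rangle_H$ with $h=\int_0^\cdot\sigma_\infty(s)\mathbf{1}_{[0,t]}(s)\,ds$, split the integrand, and feed in the decay estimates \eqref{eq:asys}, \eqref{eq:asys2}, \eqref{eq:asyb} and Proposition~\ref{prop:asymY}(i) --- but it differs in two respects. For (i), the paper reuses the integrand bounds from the proof of Proposition~\ref{prop:estbDX}(i) and integrates them over $[0,t]\subset[0,1]$, while you shortcut through the Hilbert-level Cauchy--Schwarz $E[|\langle h,DS_t^x\rangle_H|]\le\|h\|_H\,E[\|DS_t^x\|_H^2]^{1/2}$ and plug in Proposition~\ref{prop:estbDX}(i) together with $\|h\|_H\le\sigma_2\sqrt t$; this is cleaner and equally valid. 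For (ii), you split $e^{Z_t^x-Z_s^x}\sigma(s,X_s^x)-\sigma_\infty(s)=(e^{Z_t^x-Z_s^x}-1)\sigma(s,X_s^x)+(\sigma(s,X_s^x)-\sigma_\infty(s))$, while the paper attaches the exponential to the other factor, $(\sigma-\sigma_\infty)e^{Z_t^x-Z_s^x}+\sigma_\infty(e^{Z_t^x-Z_s^x}-1)$; the two decompositions are algebraically equivalent and, since $\sigma$ is bounded and the exponential moment bound \eqref{eq:DSh-est-} controls the $e^{Z}$-weight in either placement, they produce the same $s^{-\alpha}$, $s^{-\alpha-\frac12}$, $s^{-\beta}$ contributions. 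One small thing you should make explicit: the displayed pointwise bounds on $E[|\sigma-\sigma_\infty|]$ and $E[|e^{Z_t-Z_s}-1|]$ only hold for $s\ge1$ (the power terms diverge at $s=0$), so the $s\in[0,1]$ portion of the time integral must be handled separately by the constant-integrand argument of part (i); the paper's presentation of \eqref{eq:asys}--\eqref{eq:asyb} implicitly absorbs this into the exponential terms, but a blind proof needs to say it. With that caveat noted, your case-checking that the final exponent is $1-(\alpha\wedge\beta)$ with a logarithm only at $\alpha\wedge\beta=1$ is correct; in particular the $s^{-\alpha-\frac12}$ piece contributes strictly better than $t^{1-(\alpha\wedge\beta)}$ for all admissible $\alpha,\beta$, so it never governs the rate.
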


\begin{proof}
From \eqref{eq:DSh} we have
\begin{align*}
&\left\langle \int _0^\cdot \sigma _\infty (s) {\mathbf 1}_{[0,t]}(s) ds, DS_t^x \right\rangle _H \\
&= \int _0^\infty \left( e^{Z_t^x-Z_s^x} \sigma (s,X_s^x) - \sigma _\infty (s)\right) \sigma _\infty (s) {\mathbf 1}_{[0,t]}(s) ds \\
& = \int _0^t \sigma _\infty (s) \left( \sigma (s,X_s^x) - \sigma _\infty (s) \right)e^{Z_t-Z_s} ds + \int _0^t \sigma _\infty (s)^2 \left( e^{Z_t-Z_s} - 1 \right) ds
\end{align*}
where $Z_t^x$ is the same as in \eqref{eq:z-def}.
Hence, \eqref{eq:DSh-est-} and \eqref{eq:DSh-est-2} yield
\begin{align*}
&\sup _{x\in [x_0,\infty )} E \left[ \left| \left\langle \int _0^\cdot \sigma _\infty (s) {\mathbf 1}_{[0,t]}(s) ds, DS_t^x \right\rangle _H \right| \right] \\
&\leq \sigma _2 \int _0^t E\left[ \left( \sigma (s,X_s^x) - \sigma _\infty (s)\right) ^2 e^{2(Z_t^x-Z_s^x)}\right] ^{\frac{1}{2}}ds + \sigma _2^2 \int _0^t E\left[ \left( e^{Z_t-Z_s} - 1 \right) ^2 \right] ^{\frac{1}{2}}ds\\
&\leq C \left( \int _0^t E\left[ \left| \sigma (s,X_s^x) - \sigma _\infty (s) \right| ^{2r} \right] ^{\frac{1}{2r}} ds + \int _0^t P\left( \inf _{v\in [s,t]} Y_v^x <1 \right) ^{\frac{1}{2r}} ds \right. \\
& \quad \hspace{1cm} + \int _0^t \left( \int _s^t E\left[ \left| \partial _y \sigma (u,X_u^x) \right| ^{2r} \right] ^{\frac{1}{r}} du \right) ^{\frac{1}{2}} ds \\
& \quad \hspace{5cm} \left.  + \int _0^t \int _s^t E\left[ \left| \partial _y b(u,X_u^x) \right| ^{2r} {\bf 1}_{[1,\infty )}\left( Y_u^x\right) \right] ^{\frac{1}{2r}} du ds \right) 
\end{align*}
for some $r\in (1,\infty )$.
Therefore, (i) and (ii) follow from Proposition~\ref{prop:asymY}(i), \eqref{eq:asys}, \eqref{eq:asys2} and \eqref{eq:asyb}.
\end{proof}

\subsection{Proof of Theorem~{\rm \ref{thm:BE1}}}

In this section, we prove Theorem~\ref{thm:BE1}.

\begin{proof}[{Proof of Theorem~{\rm \ref{thm:BE1}}}]
Let 
\[
F_t^x := \frac{X_t^x -x - t\overline{b_\infty}(t)}{\overline{\sigma _\infty}(t)\sqrt{t}}, \quad t>0.
\]
In view of \eqref{eq:MS2}, it is sufficient to show that
\begin{equation}\label{eq:BE1-01}
E\left[ \left| 1- \left\langle D (-L)^{-1} \left( F_t^x - E[F_t^x] \right), DF_t^x \right\rangle _H \right| \right] + |E[F_t^x]| \leq \frac{C}{\sqrt{t}}, \quad t\geq 1.
\end{equation}
From \eqref{eq:SDE1} we have
\begin{align*}
F_t^x &= \frac{1}{\overline{\sigma _\infty}(t) \sqrt{t}} \left( \int _0^t \sigma (s,X_s^x) dB_s + \int _0^t (b(s,X_s^x) -b_\infty (s)) ds \right) \\
&= \frac{1}{\overline{\sigma _\infty}(t) \sqrt{t}} \int _0^t \sigma _\infty (s) dB_s + \frac{1}{\overline{\sigma _\infty}(t) \sqrt{t}}\int _0^t (\sigma (s,X_s^x) -\sigma _\infty (s))dB_s \\
&\quad + \frac{1}{\overline{\sigma _\infty}(t) \sqrt{t}} \int _0^t (b(s,X_s^x ) -b_\infty (s) ) ds.
\end{align*}
Hence, it holds that
\begin{equation}\label{eq:FR}
F_t^x = \frac{1}{\overline{\sigma _\infty}(t) \sqrt{t}} \int _0^t \sigma _\infty (s) dB_s + \frac{1}{\sqrt{t}} R_t^x
\end{equation}
where
\[
R_t^x:= 
\frac{1}{\overline{\sigma _\infty}(t)} \int _0^t (\sigma (s,X_s^x) -\sigma _\infty (s) )dB_s + \frac{1}{\overline{\sigma _\infty}(t)} \int _0^t (b(s,X_s^x ) -b_\infty (s) ) ds.
\]
Then 
\begin{equation}\label{eq:expF}
E[F_t^x]=\frac{1}{\sqrt{t}}E[R_t^x]
=\frac{1}{\overline{\sigma _\infty}(t)\sqrt{t}} E\left[ \int_0^t b(s,X_s^x ) -b_\infty (s) ds \right].
\end{equation}

Let $h_{\sigma ,t}\in H$ be the function defined by
\[
u\mapsto \frac{1}{\overline{\sigma _\infty}(t)}\int _0^u \sigma _\infty (s) {\mathbf 1}_{[0,t]}(s) ds.
\]
Explicit calculation yields
\begin{equation}\label{eq:normhs}
| h_{\sigma ,t} | _H^2 = \frac{1}{\overline{\sigma _\infty}(t)^2} \int _0^\infty \sigma _\infty (s) ^2 {\mathbf 1}_{[0,t]}(s) ds = t.
\end{equation}
Note that 
\[
 \int _0^t \sigma _\infty (s) dB_s = \int _0^\infty \sigma _\infty (s) {\mathbf 1}_{[0,t]}(s) dB_s = \overline{\sigma _\infty}(t) \left\langle h_{\sigma ,t} , B \right\rangle _H
\]
where the right-hand side is defined as the Wiener integral of order $1$ (see \cite[Page 8]{Shi04}). 
Since this equality yields 
\begin{equation}\label{eq:DB}
\left\langle D \int _0^t \sigma _\infty (s) dB_s, h \right\rangle _H = \overline{\sigma _\infty}(t) \left\langle h_{\sigma ,t} , h \right\rangle _H, \quad h\in H, 
\end{equation}
we have
\begin{align}
\label{eq:DF} \langle DF_t^x , h\rangle _H &= \frac{1}{\sqrt{t}} \left\langle h_{\sigma ,t} , h \right\rangle _H + \frac{1}{\sqrt{t}} \langle DR_t^x, h\rangle _H, \quad h\in H.
\end{align}
Hence by \eqref{eq:normhs},
\begin{equation}\label{eq:DB-DF}
\begin{split}
\left\langle D \int_0^t \sigma_{\infty}(s) dB_s, DF_t^x \right\rangle_H
&= \frac{1}{\sqrt{t}} \left\langle D\int_0^t\sigma_{\infty}(s) dB_s, h_{\sigma,t} + DR_t^x \right\rangle_H \\
&=\frac{\overline{\sigma_{\infty}}(t)}{\sqrt{t}} \left\langle h_{\sigma,t}, h_{\sigma,t} + DR_t^x \right\rangle_H\\
&=\overline{\sigma_{\infty}}(t) \sqrt{t} + \frac{\overline{\sigma_{\infty}}(t)}{\sqrt{t}} \left\langle h_{\sigma,t}, DR_t^x \right\rangle_H.
\end{split}
\end{equation}
On the other hand, from \eqref{eq:FR} and the fact that $(-L)^{-1} \int _0^t \sigma _\infty (s) dB_s = \int _0^t \sigma _\infty (s) dB_s$, we have
\begin{align*}
&D (-L)^{-1} \left( F_t^x - E[F_t^x] \right) \\
&= \frac{1}{\overline{\sigma _\infty}(t)\sqrt{t}} D (-L)^{-1} \int _0^t \sigma _\infty (s) dB_s + \frac{1}{\sqrt{t}} D (-L)^{-1} (R_t^x -E[R_t^x]) \\
&= \frac{1}{\overline{\sigma _\infty}(t)\sqrt{t}} D \int _0^t \sigma _\infty (s) dB_s + \frac{1}{\sqrt{t}}  D (-L)^{-1} (R_t^x -E[R_t^x]) .
\end{align*}
Combining this with \eqref{eq:DB-DF}, we obtain 
\begin{equation*}
\begin{split}
&\left\langle D(-L)^{-1}(F_t^x-E[F_t^x]), DF_t^x \right\rangle_H\\
&= \frac{1}{\overline{\sigma_{\infty}}(t)\sqrt{t}} \left\langle D\int_0^t\sigma_{\infty}(s) dB_s, DF_t^x \right\rangle_H
+\frac{1}{\sqrt{t}} \left\langle D(-L)^{-1}(R_t^x-E[R_t^x]), DF_t^x\right\rangle_H \\
&= 1 + \frac{1}{t} \left\langle h_{\sigma,t}, DR_t^x \right\rangle_H
+ \frac{1}{\sqrt{t}} \left\langle D (-L)^{-1} (R_t^x -E[R_t^x]), DF_t^x \right\rangle_H.
\end{split}
\end{equation*}
Hence, this equality implies 
\begin{align*}
&E\left[ \left|  1- \left\langle D (-L)^{-1} \left( F_t^x - E[F_t^x] \right), DF_t^x \right\rangle _H \right| \right] \\
&\leq E\left[ \left|\frac{1}{t} \left\langle h_{\sigma,t}, DR_t^x \right\rangle _H \right| \right] + \frac{1}{\sqrt{t}} E\left[ \left| \left\langle D (-L)^{-1}(R_t^x -E[R_t^x]), DF_t^x \right\rangle _H \right| \right] \\
&\leq \frac{1}{t \overline{\sigma _\infty}(t)} E \left[ \left| \left\langle \int _0^\cdot \sigma _\infty (s) {\mathbf 1}_{[0,t]}(s) ds, DR_t^x \right\rangle _H \right| \right]\\
&\quad + \frac{1}{\sqrt{t}} E\left[ \left| D (-L)^{-1} (R_t^x -E[R_t^x]) \right| _H^2 \right] ^{\frac{1}{2}} E\left[ |DF_t^x|_H^2 \right] ^{\frac{1}{2}}.
\end{align*}
This implies that for the proof of \eqref{eq:BE1-01}, 
it is sufficient to show that for $t\ge 1$, 
\begin{align}
&\frac{1}{\sqrt{t}} E \left[ \left| \left\langle \int _0^\cdot \sigma _\infty (s) {\mathbf 1}_{[0,t]}(s) ds, DR_t^x \right\rangle _H \right| \right] \leq C, \label{eq:main2-02-01} \\
&\sup _{x\in [x_0,\infty )} E\left[ \left| D (-L)^{-1} (R_t^x -E[R_t^x]) \right| _H^2 \right] \leq C, \label{eq:main2-02-02} \\
&\sup _{x\in [x_0,\infty )} E\left[ |DF_t^x|_H^2 \right] \leq C, \label{eq:main2-02-03} \\
&\sup _{x\in [x_0,\infty )} \left| E[F_t^x] \right| \leq \frac{C}{\sqrt{t}} \label{eq:main2-02-04}.
\end{align}

Since $\alpha>1/2$ and $\beta>1$, we have $\alpha\wedge \beta>1/2$. 
Hence, in view of Proposition~\ref{prop:estbDXh}(ii), it follows that
\begin{equation*}
\begin{split}
\frac{1}{\sqrt{t}} E \left[ \left| \left\langle \int _0^\cdot \sigma _\infty (s) {\mathbf 1}_{[0,t]}(s) ds, DR_t^x \right\rangle _H \right| \right]
& = \frac{1}{\overline{\sigma_{\infty}}(t)\sqrt{t}} E \left[ \left| \left\langle \int _0^\cdot \sigma _\infty (s) {\mathbf 1}_{[0,t]}(s) ds, DS_t^x \right\rangle _H \right| \right]\\
& \leq C t^{\frac{1}{2}-\alpha\wedge\beta} \leq C.
\end{split}
\end{equation*}
We thus obtain \eqref{eq:main2-02-01}.  

From \eqref{eq:DF} we have
\begin{align*}
|DF_t^x|_H^2 &= \frac{1}{t} |h_{\sigma,t}+DR_t^x|_H^2 \leq \frac{2}{t} \left( |h_{\sigma,t}|^2 + |DR_t^x|_H^2 \right) \\
&= 2 + \frac{2}{\overline{\sigma_{\infty}}(t)^2 t} |DS_t^x|_H^2
\leq 2 +  \frac{2}{\sigma_1^2 t} |DS_t^x|_H^2
\end{align*}
and so 
\[
E\left[ |DF_t^x|_H^2\right] 
\leq 2 + \frac{2}{\sigma_1^2 t} E [ |DS_t^x|_H^2 ].
\]
Hence, \eqref{eq:main2-02-03} follows from Proposition~\ref{prop:estbDX}.

Since \eqref{eq:expF} implies
\begin{align*}
\left| E[F_t^x] \right| 
& = \frac{1}{\overline{\sigma_{\infty}}(t) \sqrt{t}} \left| E\left[ \int _0^t (b(s,X_s^x) - b_\infty (s)) ds \right] \right| \\
& \leq \frac{1}{\sigma_1 \sqrt{t}} E\left[ \left| \int _0^t (b(s,X_s^x) - b_\infty (s)) ds \right| \right],
\end{align*}
from Lemma~\ref{lem:estLpIb}(ii) with $p=1$ we have \eqref{eq:main2-02-04}.

In view of the fact that the mappings of linear operators
\[
\left\{ F\in L^2(P); E[F]=0\right\} \mathop{\longrightarrow}^{(-L)^{-1}} {\cal D}^{2,2} \mathop{\longrightarrow}^D {\cal D}^{1,2}(H) \hookrightarrow L^2(\mu ;H)
\]
are bounded (see \cite[Section 2.8.2]{NP12} or \cite[Proposition 4.11]{Shi04}), 
for the proof  of \eqref{eq:main2-02-02} it is sufficient to show
\[
E\left[ \left| R_t^x -E[R_t^x] \right| ^2 \right] \leq C, \quad t\geq 1 .
\]
This estimate follows from the fact that
\[
E\left[ \left| R_t^x -E[R_t^x] \right| ^2 \right] 
\leq 2E\left[ \left| R_t ^x \right| ^2 \right] + 2 \left| E[R_t^x] \right| ^2,
\]
Lemma~\ref{lem:estLpIb}(i) with $p=2$, 
and Lemma \ref{lem:estLpIb}(ii) with $p=1$ and $p=2$.
\end{proof}

\section{Case of unbounded drift coefficients}\label{sec:BEunbdd}

In this section we consider the same issue as in Section~\ref{sec:BE} in the case that $b(t,y)$ diverges at $y=l \in {\mathbb R}$.
Let $l \in {\mathbb R}$, and let $\sigma , b\in C^{0,1}([0,\infty ) \times (l ,\infty ))$ satisfy the following.

\begin{ass}\label{ass2}
\begin{itemize}


\item There exist constants $\sigma _1, \sigma _2\in (0,\infty )$ such that
\begin{equation}\label{ass:s11}\tag{4.$\sigma$1}
\sigma _1 \leq \sigma (t,y) \leq \sigma _2, \quad (t,y)\in [0,\infty ) \times (l, \infty ) .
\end{equation}

\item There exist constants $\alpha \in (0, \infty )$ and $\sigma _3$ such that
\begin{equation}\label{ass:s21}\tag{4.$\sigma$2}
\sup _{t\in [0,\infty )} \left| \frac{\partial}{\partial y} \sigma (t,y)\right| \leq \sigma _3 (y\vee 1)^{-\alpha -1}, \quad y \in (l,\infty ).
\end{equation}

\item For any $\tilde{y}\in (l ,\infty )$ it holds that
\begin{equation}\label{ass:b01}\tag{4.$b$0}
\sup _{(t,y) \in [0,\infty )\times [\tilde{y},\infty )}\left( |b(t,y)| + \left| \frac{\partial}{\partial y}b(t,y)\right| \right) < \infty.
\end{equation}

\item There exists a constant $b _1 \in (0,\infty )$ such that
\begin{equation}\label{ass:b11}\tag{4.$b$1}
b _1 \leq b (t,y) , \quad (t,y)\in [0,\infty )\times (l, \infty ) .
\end{equation}

\item There exist constants $\beta \in (0, \infty )$ and $b_3$ such that
\begin{equation}\label{ass:b21}\tag{4.$b$2}
\sup _{t\in [0,\infty )} \left| \frac{\partial}{\partial y} b(t,y)\right| \leq b_3 y^{-\beta -1}, \quad y\geq (l \vee 0) +1.
\end{equation}

\item There exist constants $\gamma _1, \gamma _2 \in (1,\infty )$ and $c_1,c_2 \in (0,\infty )$ such that
\begin{align}
\label{ass:b31}\tag{4.$b$3}& \inf _{t\in [0,\infty )} b(t,y) \geq c_1 (y-l)^{-\gamma _1 }, \quad y \in (l,l+1), \\
\label{ass:b31'}\tag{4.$b$3'}& \sup _{t\in [0,\infty )}\left| \frac{\partial}{\partial y} b(t,y)\right| \leq c_2 (y-l)^{-\gamma _2}, \quad y \in (l,l+1).
\end{align}

\end{itemize}
\end{ass}

Consider the solution $X_t^x$ of the SDE:
\begin{equation}\label{eq:SDE11}\left\{\begin{array}{l}
dX_t^x = \sigma (t,X_t^x) dB_t + b(t,X_t^x)dt, \\
X_0^x =x \in (l, \infty ).
\end{array}\right.\end{equation}
In this section, we fix $x\in (l, \infty )$.
Since
\[
\sup _{y_1, y_2 \in [z, \infty ); y_1\neq y_2} \sup _{t\in [0,\infty )} \left( \frac{|\sigma (t,y_2) - \sigma (t,y_1)|}{|y_2 -y_1|} + \frac{|b(t,y_2) - b(t,y_1)|}{|y_2 -y_1|} \right) < \infty
\]
holds for any $z \in (l,\infty )$, the solution $X_t^x$ exists pathwise-uniquely up to the explosion time 
\[
\zeta^x := \lim _{n\rightarrow \infty} \inf \left\{ t\geq 0: X_t^x \leq l + \frac{1}{n} \right\} >0.
\]
In view of \eqref{ass:b31}, by comparison with Bessel processes we have $\zeta ^x =\infty$, i.e. $X_t^x$ does not hit $l$ almost surely.

Similarly to the argument in Section~\ref{sec:BE}, 
there exist $\sigma _\infty \in C([0,\infty ); [\sigma _1, \sigma _2])$, $b_2\in [b_1,\infty)$, 
$b_\infty \in C([0,\infty ); [b_1, b_2])$ and $b_3\in [0,\infty)$ such that
\begin{align}
\label{eq:ests1} & \sup _{t\in [0,\infty )} |\sigma (t,y)-\sigma _\infty (t)| \leq \frac{\sigma _3 }{\alpha} y^{-\alpha}, \quad y\geq (l \vee 0) +1,\\
\label{eq:estb1} & \sup _{t\in [0,\infty )} |b(t,y)-b_\infty (t)| \leq \frac{b_3}{\beta} y^{-\beta}, \quad y\geq (l \vee 0) +1.
\end{align}
Similarly to Section~\ref{sec:BE}, let
\[
\overline{\sigma _\infty}(t) := \sqrt{\frac{1}{t} \int _0^t \sigma _\infty (s)^2 ds} \quad \mbox{and} \quad \overline{b _\infty}(t) := \frac{1}{t} \int _0^t b_\infty (s) ds.
\]
From the comparison principle (see \cite[Theorem 1.3]{Ya73}) and \eqref{ass:b11}, we also have
\begin{equation}\label{eq:comparison2}
Y_t^x \leq X_t^x, \quad t\in [0,\infty ) \quad \mbox{almost surely}
\end{equation}
where $Y_t^x$ is the pathwise-unique solution of \eqref{eq:SDEY}.

We have a version of the law of large numbers as follows.

\begin{thm}\label{thm:main12}
Under Assumption~{\rm \ref{ass2}}, it holds that
\[
\lim _{t\rightarrow \infty} \left( \frac{X_t^x}{t} - \overline{b _\infty}(t) \right) = 0
\]
almost surely.
\end{thm}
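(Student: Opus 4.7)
The plan is to adapt the argument of Theorem~\ref{thm:main1} almost verbatim, the only novel issue being the singularity of $b$ at $y=l$. Since \eqref{ass:b31} combined with comparison with Bessel-type processes already guarantees $\zeta^x=\infty$, and the SDE solution is well defined on $[0,t]$ for every $t\geq 0$, the integrals $\int_0^t\sigma(s,X_s^x)dB_s$ and $\int_0^t b(s,X_s^x)ds$ are both finite almost surely. So the strategy is the standard decomposition
\[
\frac{X_t^x}{t}-\overline{b_\infty}(t)
=\frac{x}{t}+\frac{1}{t}\int_0^t \sigma(s,X_s^x)\,dB_s
+\frac{1}{t}\int_0^t\bigl(b(s,X_s^x)-b_\infty(s)\bigr)\,ds,
\]
and I will show that each of the three terms on the right vanishes almost surely.

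First, I would establish divergence: from \eqref{eq:comparison2} and \eqref{eq:divergence-0} we immediately get $X_t^x/t\to b_1>0$ as a lower bound, and in particular $\lim_{t\to\infty} X_t^x=\infty$ almost surely. This ensures that once $t$ is large enough, $X_t^x$ stays bounded away from the singular point $l$ and above the threshold $(l\vee 0)+1$ beyond which the quantitative bound \eqref{eq:estb1} applies. Next, for the martingale term I would apply the Dambis–Dubins–Schwarz representation exactly as in \eqref{eq:mtg-y}--\eqref{eq:lli-1}: writing $\int_0^t\sigma(s,X_s^x)dB_s=\tilde B_{\langle M\rangle_t}$ with $\langle M\rangle_t\leq \sigma_2^2 t$ by \eqref{ass:s11}, the law of the iterated logarithm for Brownian motion yields
\[
\lim_{t\to\infty}\frac{1}{t}\int_0^t \sigma(s,X_s^x)\,dB_s=0\quad\text{almost surely.}
\]

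For the drift term, fix $\omega$ in the full-measure set on which $X_t^x(\omega)\to\infty$, and let $\varepsilon>0$. By \eqref{eq:estb1} choose $L\geq (l\vee 0)+1$ so large that $(b_3/\beta)L^{-\beta}<\varepsilon$, and then choose $T=T(\omega,\varepsilon)$ so that $X_s^x(\omega)\geq L$ for all $s\geq T$. On $[0,T]$ the path $X_\cdot^x(\omega)$ is continuous and stays in a compact subset of $(l,\infty)$, so by \eqref{ass:b01} the integral $\int_0^T\bigl(b(s,X_s^x)-b_\infty(s)\bigr)ds$ is finite, and dividing by $t$ it tends to $0$. For $s\in[T,t]$ we have $|b(s,X_s^x)-b_\infty(s)|<\varepsilon$ by the choice of $L$, giving
\[
\limsup_{t\to\infty}\left|\frac{1}{t}\int_0^t\bigl(b(s,X_s^x)-b_\infty(s)\bigr)ds\right|\leq \varepsilon.
\]
Since $\varepsilon$ is arbitrary, this limsup is zero almost surely, and combining the three pieces yields the claim.

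The main obstacle is only apparent rather than real: one might worry that the singularity of $b$ at $l$ could make the drift integral blow up, but this is ruled out on two distinct scales. On compact time intervals $[0,T]$ the non-explosion $\zeta^x=\infty$ (ensured by \eqref{ass:b31}) together with path continuity keeps $X_s^x$ uniformly bounded below $l$, and on the tail $[T,\infty)$ the divergence $X_s^x\to\infty$ pushes the path into the regime where the quantitative decay \eqref{eq:estb1} takes over. Thus no new estimate beyond those already proved in Section~\ref{sec:marti+drift} (via the comparison $Y_t^x\leq X_t^x$) is needed.
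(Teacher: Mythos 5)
Your proof is correct and follows essentially the same route as the paper, which simply says to argue as in Theorem~\ref{thm:main1}: comparison with $Y_t^x$ for divergence, Dambis--Dubins--Schwarz plus the law of the iterated logarithm for the martingale term, and pointwise convergence of $b(s,X_s^x)-b_\infty(s)$ feeding a Ces\`aro-type average for the drift term. You have merely made explicit the one point the paper leaves implicit, namely that \eqref{ass:b01} together with non-explosion keeps the drift integral over any compact time interval finite despite the singularity at $l$.
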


\begin{proof}
Similarly to the proof of Theorem~\ref{thm:main1} we obtain the assertion.
\end{proof}

Let us present  the Berry-Esseen bound for the solution $X^x$ to the SDE \eqref{eq:SDE11} as follows.

\begin{thm}\label{thm:BE2}
Let Assumption~{\rm \ref{ass2}} with $\alpha >\frac{1}{2}$ and $\beta >1$ hold.
Let $x\in (l,\infty )$, and let $X_t^x$ be the solution to \eqref{eq:SDE11}.
Furthermore, assume that there exists $q\in (1,\infty )$ satisfying
\begin{equation}\label{ass:sb1}
\sup _{(t,y)\in [0,\infty )\times {\mathbb R}} \left( \frac{q(q+1)}{q-1} \left( \frac{\partial \sigma}{\partial y} (t,y) \right) ^2 +2q \frac{\partial b}{\partial y}(t,y) \right) < \frac{\sigma _1^2 b_1^2}{2}.
\end{equation}
Then there exists a positive constant $C$ such that 
\[
d_{\rm  TV} \left( {\rm Law} \left(\frac{X_t^x -x - t \overline{b _\infty}(t)}{\overline{\sigma _\infty}(t) \sqrt{t}} \right) , {\rm Law}(Z)\right) \leq \frac{C}{\sqrt{t}}, \quad t\geq 1,
\]
where $Z$ is a random variable with the standard normal distribution.
In particular, it holds that
\[
\sup _{y\in {\mathbb R}} \left| P\left( \frac{X_t^x - x - t \overline{b _\infty}(t)}{\overline{\sigma _\infty}(t) \sqrt{t}} \geq y \right) - P(Z\geq y) \right| \leq \frac{C}{\sqrt{t}}, \quad t\geq 1 .
\]
\end{thm}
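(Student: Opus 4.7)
The plan is to follow the same Malliavin--Stein route that drove Theorem~\ref{thm:BE1}, namely the estimate \eqref{eq:MS2} applied to
\[
F_t^x := \frac{X_t^x -x - t\overline{b_\infty}(t)}{\overline{\sigma _\infty}(t)\sqrt{t}}, \qquad t \geq 1,
\]
together with the decomposition $F_t^x = \overline{\sigma_\infty}(t)^{-1} t^{-1/2} \int_0^t \sigma_\infty(s)\,dB_s + t^{-1/2} R_t^x$ and the identity \eqref{eq:DB-DF}. Exactly as before, it will suffice to prove four estimates: the $L^1$ bound on $\langle h_{\sigma,t}, DS_t^x\rangle_H$ divided by $\sqrt{t}$, a uniform $L^2$ bound on $DS_t^x$, a uniform $L^2$ bound on $R_t^x - E[R_t^x]$, and $|E[F_t^x]| \leq C/\sqrt{t}$. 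The structural algebra is identical to that in Section~\ref{subsec:MCbdd}; only the underlying analytic estimates must be redone to accommodate the singularity of $b$ at $l$.

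First I would establish the analogue of the ``far from the boundary'' estimates. Thanks to \eqref{eq:comparison2}, the lower comparison $Y_t^x \leq X_t^x$ still holds, so Corollary~\ref{cor:intY} and Proposition~\ref{prop:asymY} continue to give exponential decay of $P(Y_t^x \leq K)$ and polynomial tail control of $E[(Y_t^x)^{-\gamma}\mathbf{1}_{[1,\infty)}(Y_t^x)]$. Combining these with \eqref{eq:ests1}, \eqref{eq:estb1} and \eqref{ass:s21}, \eqref{ass:b21} reproduces verbatim the estimates \eqref{eq:asys}, \eqref{eq:asys2}, \eqref{eq:asyb} on $s\ge 1$ (using $y\ge (l\vee 0)+1$ now instead of $y\ge 1$). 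Consequently the derivations leading to Lemma~\ref{lem:estLpIb} and Propositions~\ref{prop:estbDX}, \ref{prop:estbDXh} carry over, provided the crucial exponential integrability step, i.e.\ the analogue of Lemma~\ref{lem:expest}, still works.

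The main obstacle, and the heart of the new argument, is to control the integrand $\tilde c_1[\partial_y\sigma(v,X_v^x)]^2 + \tilde c_2 \partial_y b(v,X_v^x)$ on the region $\{X_v^x \in (l, l+1)\}$, where by \eqref{ass:b31'} it may blow up like $(X_v^x - l)^{-\gamma_2}$ and where the lower comparison $Y_v^x \leq X_v^x$ is useless. Here the repulsion hypothesis \eqref{ass:b31} is decisive: by the comparison theorem of \cite{Ya73}, $X_v^x$ dominates (in law, up to a time change by \eqref{eq:quad}) a process whose drift beats the singular part of a Bessel-type equation of dimension $\delta > 2$ near $l$, so that the occupation time $\int_0^\infty \mathbf{1}_{(l,l+1)}(X_v^x)\,dv$ and moreover $\int_0^\infty (X_v^x-l)^{-\gamma_2}\mathbf{1}_{(l,l+1)}(X_v^x)\,dv$ has exponential moments. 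More concretely, I would introduce a suitable scale function $s(\cdot)$ so that $s(X_v^x)$ is a martingale plus drift on $(l,l+1)$, transform via \eqref{eq:MB}, and then apply Propositions~\ref{prop:integrable} and~\ref{prop:integrable-mtg} to the transformed Brownian functional, using $\gamma_2 > 1$ to secure integrability at the boundary. Together with an auxiliary estimate on $P(\inf_{s\in[0,t]}X_s^x \leq l+\varepsilon)$ analogous to Proposition~\ref{prop:asymY}(i), this yields the full analogue of Lemma~\ref{lem:expest} subject to \eqref{ass:sb1}.

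Once these ingredients are in place, the proof of Theorem~\ref{thm:BE2} is a near copy of that of Theorem~\ref{thm:BE1}: split each expectation $E[\,\cdot\,]$ over $\{Y_v^x \geq 1\}$ and $\{Y_v^x < 1\}$ or, near the boundary, over $\{X_v^x \geq l+1\}$ and $\{X_v^x \in (l,l+1)\}$; on the first region use the polynomial decay estimates \eqref{eq:ests1}--\eqref{eq:estb1} together with Proposition~\ref{prop:asymY}; on the second region use the new Bessel-comparison exponential bounds. Summing the contributions and invoking the Malliavin--Stein bound \eqref{eq:MS2} gives the required $C/\sqrt{t}$ rate, and the $\sup_y$ version follows from the identity $d_{\mathrm{TV}} \geq \tfrac12 \sup_y |P(X\geq y) - P(Z\geq y)|$ applied with $X = F_t^x$. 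The anticipated hard point is genuinely the Bessel-type comparison near $l$: getting the exponential integrability uniform in $t$ despite the singularity of $\partial_y b$ requires carefully matching the parameter $\gamma_2$ in \eqref{ass:b31'} against the repulsion strength in \eqref{ass:b31}, and only the combination $\gamma_2 > 1$ together with \eqref{ass:b31} makes the relevant Brownian-with-drift exponential moments finite.
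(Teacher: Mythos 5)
There is a genuine gap in your proposal: you misidentify where the new analytic difficulty lies, and the argument you sketch for the part you think is hard would not be pursued, while the part you dismiss as carrying over verbatim is precisely what requires a new lemma.

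You claim the crux is the exponential integrability of $\tilde c_1[\partial_y\sigma(v,X_v^x)]^2 + \tilde c_2\,\partial_y b(v,X_v^x)$ on $\{X_v^x\in(l,l+1)\}$, because $\partial_y b$ ``may blow up like $(X_v^x-l)^{-\gamma_2}$'' there. But this integrand sits inside an exponential, so one only needs a one-sided (upper) bound, and hypothesis \eqref{ass:sb1} supplies exactly this: the supremum of $\frac{q(q+1)}{q-1}(\partial_y\sigma)^2 + 2q\,\partial_y b$ over all $(t,y)$ is assumed finite. Since $b(t,y)\to\infty$ as $y\downarrow l$ and $\partial_y b$ is assumed bounded above, the singular part of $\partial_y b$ near $l$ is necessarily negative, so it only helps the exponential moment. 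Consequently the analogue of Lemma~\ref{lem:expest} (Lemma~\ref{lem:expest2} in the paper) is proved by the \emph{same} split $\{Y_v^x \leq L\}$/$\{Y_v^x > L\}$ and Corollary~\ref{cor:intY}; no Bessel-type comparison, scale function, or exponential moment of a singular occupation functional is needed.

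Conversely, you assert that ``the derivations leading to Lemma~\ref{lem:estLpIb} \ldots\ carry over,'' but Lemma~\ref{lem:estLpIb}(ii) does \emph{not} carry over: its proof in Section~\ref{sec:BE} uses the uniform bound $|b(s,X_s^x)-b_\infty(s)|\leq 2b_2$ on the bad region $\{Y_s^x<1\}$, and this fails here because $b$ is unbounded near $l$. Controlling $E\bigl[|\int_0^t(b(s,X_s^x)-b_\infty(s))\,ds|^p\bigr]$ in the presence of a singular drift is where the hypothesis \eqref{ass:b31} with $\gamma_1>1$ really enters. The paper's route is Proposition~\ref{prop:intX}, an It\^o-formula argument with test functions of the form $(y-l)^{-\gamma}h(y)$ showing that $E\bigl[\sup_{t\geq 0}(X_t^x-l)^{-\gamma}\bigr]<\infty$ for \emph{every} $\gamma>0$ (because the drift term dominates near $l$ when $\gamma_1>1$); this feeds into H\"older's inequality as in \eqref{eq:estLpIb2-01}, separating the pointwise supremum of the singular weight from a polynomial moment of the occupation time $\int_0^\infty\mathbf{1}_{(-\infty,l+1)}(Y_s^x)\,ds$ handled by Corollary~\ref{cor:intY}. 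Your proposal omits this moment estimate entirely, and the ``careful matching of $\gamma_2$ against the repulsion strength'' you anticipate is not required here at all precisely because $\gamma_1>1$ gives unlimited strength; that matching becomes relevant only in the relaxation of Subsection~\ref{sec:relax} where $\gamma_1=1$ is considered.
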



Similarly to Section \ref{sec:BE}, we denote the derivatives of $\sigma (s,y)$ and $b(s,y)$ with respect to the spatial variable $y$ by $\partial _y \sigma (s,y)$ and $\partial _y b(s,y)$, respectively.
Furthermore, throughout this section, $C$ is a constant depending on $\sigma _1$, $\sigma _2$, $\sigma _3$ $b_1$, $b_2$, $b_3$, $\alpha$, $\beta$, $q$ and $x$, 
which can be different from line to line.
When the constant $C$ depends on another parameter, for example $p$, we denote it by a subscript, like $C_p$.

\subsection{Estimates of some functionals of $X_t^x$}

Similarly to Lemma \ref{lem:expest} we have the following.

\begin{lem}\label{lem:expest2}
Let Assumption~{\rm \ref{ass2}} hold.
Then, for any $\tilde{c}_1, \tilde{c}_2\geq 0$ satisfying
\[
M := \sup _{(t,y)\in [0,\infty )\times {\mathbb R}} \left( \tilde{c}_1 [\partial _y \sigma (t,y)]^2 + \tilde{c}_2 \partial _y b(t,y) \right) < \frac{\sigma _1^2 b_1^2}{2},
\]
it holds that
\[
\sup _{x\in [x_0,\infty )} \sup _{s,t\in [0,\infty ); s<t} E\left[ \exp \left( \int _s^t \left( \tilde{c}_1 [\partial _y \sigma (v, X_v^x) ]^2 + \tilde{c}_2 \partial _y b(v, X_v^x) \right) dv \right) \right] \leq C.
\]
\end{lem}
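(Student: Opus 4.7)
The plan is to follow the template of Lemma~\ref{lem:expest} almost verbatim, since away from the boundary $y=l$ the situation is the same as in Section~\ref{sec:BE}. The first thing to verify is that the hypothesis $M<\sigma_1^2b_1^2/2$ is meaningful under Assumption~\ref{ass2} despite the fact that $b$ itself blows up at $l$. This holds because $\partial_y\sigma$ is bounded on $(l,\infty)$ by \eqref{ass:s21}, while $\partial_y b$ is bounded \emph{above} on $(l,\infty)$: indeed \eqref{ass:b01} gives uniform boundedness of $\partial_y b$ on $[\tilde y,\infty)$ for any $\tilde y>l$, and since $b(t,y)\to\infty$ as $y\downarrow l$ by \eqref{ass:b31} the derivative $\partial_y b$ must tend to $-\infty$ as $y\downarrow l$. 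Thus the singularity of $b$ near $l$ only makes the integrand in the exponential more negative and never obstructs the bound.

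Next I would choose $p\in(1,\infty)$ with $pM<\sigma_1^2b_1^2/2$, set $q=p/(p-1)$, and pick $\delta>0$ with $q\delta<\sigma_1^2b_1^2/2$. With $\gamma:=\min\{2\alpha,\beta\}$, for any $\varepsilon\in(0,\gamma)$ the bounds \eqref{ass:s21} and \eqref{ass:b21} produce an $L\geq(l\vee 0)+1$ such that
\[
\sup_{t\geq 0}\bigl(\tilde c_1[\partial_y\sigma(t,y)]^2+\tilde c_2\partial_y b(t,y)\bigr)\leq \delta\, y^{-(\gamma+1-\varepsilon)},\qquad y\geq L.
\]
The comparison inequality \eqref{eq:comparison2} ($Y_v^x\leq X_v^x$) then lets me split
\[
\int_s^t\bigl(\tilde c_1[\partial_y\sigma]^2+\tilde c_2\partial_y b\bigr)(v,X_v^x)\,dv
\leq M\int_s^t\mathbf 1_{(-\infty,L]}(Y_v^x)\,dv+\delta\int_s^t(Y_v^x)^{-(\gamma+1-\varepsilon)}\mathbf 1_{(L,\infty)}(Y_v^x)\,dv,
\]
since on $\{Y_v^x>L\}$ one has $X_v^x\geq Y_v^x>L$ and the spatial decay estimate is applicable.

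Applying H\"older's inequality with exponents $p$ and $q$ bounds the exponential moment by
\[
E\!\left[\exp\!\left(pM\!\int_s^t\mathbf 1_{(-\infty,L]}(Y_v^x)\,dv\right)\right]^{1/p}\!\cdot E\!\left[\exp\!\left(q\delta\!\int_s^t(1+(Y_v^x\vee 0))^{-(\gamma+1-\varepsilon)}dv\right)\right]^{1/q}.
\]
Both factors are uniformly bounded over $x\in[x_0,\infty)$ by Corollary~\ref{cor:intY}(i): the functions $\mathbf 1_{(-\infty,L]}$ and $y\mapsto(1+(y\vee 0))^{-(\gamma+1-\varepsilon)}$ are nonnegative, nonincreasing and integrable on $[0,\infty)$ (using $\gamma-\varepsilon>0$), and the constants $pM$ and $q\delta$ were chosen strictly less than $\sigma_1^2b_1^2/2$. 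This yields the required uniform bound.

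The only genuinely new point—and hence the main thing requiring care—is the verification that $M<\infty$ holds in the unbounded-drift regime; once this is established, the proof is a direct line-for-line adaptation of Lemma~\ref{lem:expest}, and Corollary~\ref{cor:intY} handles the rest without modification since that corollary concerns the comparison process $Y_t^x$ which is defined on $\mathbb R$ and is unaffected by the presence of the boundary $l$ for $X_t^x$.
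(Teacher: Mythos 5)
Your argument is correct and is essentially the paper's proof: the paper names the parameters $p$, $q$, $\delta$, $\gamma$, $\varepsilon$ and records the decay estimate on $[L,\infty)$, then refers back to the proof of Lemma~\ref{lem:expest}, and you have accurately written out the comparison/H\"older/Corollary~\ref{cor:intY} steps that this reference implies.

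One caveat about your opening paragraph. The inference that $b(t,y)\to\infty$ as $y\downarrow l$ forces $\partial_y b(t,y)\to-\infty$, and hence that $\partial_y b$ is bounded above near $l$, is not a valid deduction from Assumption~\ref{ass2}: the condition \eqref{ass:b31'} controls only $|\partial_y b|$, and one can construct $b$ satisfying \eqref{ass:b31} and \eqref{ass:b31'} for which $\partial_y b$ takes arbitrarily large positive values along a sequence $y_n\downarrow l$ (the divergence of $b$ only forces the \emph{average} of $\partial_y b$ over shrinking intervals to go to $-\infty$, not its pointwise values). This does not create a gap in your proof, because the lemma does not ask you to establish $M<\infty$; the inequality $M<\sigma_1^2 b_1^2/2$ is carried as a hypothesis of the lemma (and, at the point of application, supplied by \eqref{ass:sb1}). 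So the claim that "the main thing requiring care is the verification that $M<\infty$" should be dropped along with the heuristic offered for it; the body of your proof, which simply uses $M$ as given, is what the paper intends and is correct.
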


\begin{proof}
Let $p>1$ satisfy $ pM < \frac{\sigma_1^2b_1^2}{2}$, and let $q = \frac{p}{p-1} $. 
Take $\delta>0$ so small that $q \delta < \frac{\sigma_1^2 b_1^2}{2}$. 
Let $\gamma=(2\alpha) \wedge \beta$. 
Then it follows by \eqref{ass:s21} and \eqref{ass:b21} that, 
for any $\varepsilon\in (0,\gamma)$ 
there exists $L \in [ (l\vee 0)+1, \infty )$, which depends on $\sigma_1$, $\sigma_3$, $b_1$ and $b_3$, such that 
\[
\sup _{t \in [0,\infty )} \left( \tilde{c}_1 [\partial _y \sigma (t,y)]^2 + \tilde{c}_2 \partial _y b(t,y) \right) 
\leq \delta y ^{-(1+\gamma -\varepsilon)}, \quad y \in [L,\infty ).
\]
Hence, similarly to the proof of Lemma~\ref{lem:expest}, we obtain the assertion. 
\end{proof}

Now we prepare some estimates on the behaviour of $X^x$ around $l$. 

\begin{prop}\label{prop:intX}
Under Assumption~{\rm \ref{ass2}}, for any $\gamma \in (0,\infty )$ it holds that
\[
E\left[ \sup _{t\in [0,\infty )} (X_t^x -l)^{-\gamma }\right] < \infty .
\]
\end{prop}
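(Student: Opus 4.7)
The plan is to reduce the problem to a tail estimate on $\inf_{t\geq 0} X_t^x$ via a nonnegative supermartingale, and then integrate. Given $\gamma>0$, I would fix some $K>\gamma$ and build a $C^2$ function $F_K:(l,\infty)\to[0,\infty)$ that is decreasing, blows up at least as fast as $(y-l)^{-K}$ near $l$, decays to $0$ at infinity, and whose time-inhomogeneous generator
\[
\mathcal{L}F_K(t,y):=b(t,y)F_K'(y)+\tfrac{1}{2}\sigma(t,y)^2 F_K''(y)
\]
is nonpositive for every $(t,y)\in[0,\infty)\times(l,\infty)$.

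Near $l$ I would take $F_K(y)=(y-l)^{-K}$, so that
\[
\mathcal{L}F_K(t,y)=-Kb(t,y)(y-l)^{-K-1}+\tfrac{K(K+1)}{2}\sigma(t,y)^2(y-l)^{-K-2}.
\]
The hypothesis \eqref{ass:b31} gives $b(t,y)\geq c_1(y-l)^{-\gamma_1}$ for $y\in(l,l+1)$, so the drift term is at most $-Kc_1(y-l)^{-K-1-\gamma_1}$. Since $\gamma_1>1$ forces $K+1+\gamma_1>K+2$, this negative term dominates the diffusion contribution $O((y-l)^{-K-2})$ for $y$ close enough to $l$, yielding $\mathcal{L}F_K\leq 0$ on a one-sided neighborhood $(l,l+\delta_K)$. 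On the complementary region $[l+\delta_K,\infty)$, $F_K$ is designed to transition smoothly into an exponentially decaying profile $C_1 e^{-\beta(y-l)}$ with $\beta = b_1/(2\sigma_2^2)$, for which a direct computation using \eqref{ass:b11} and \eqref{ass:s11} gives $\mathcal{L}(e^{-\beta(y-l)})=(-\beta b+\tfrac{1}{2}\sigma^2\beta^2)e^{-\beta(y-l)}\leq 0$. The two regimes are glued through a transition window by choosing the parameters so that the residual contribution in that window remains nonpositive.

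Once $F_K$ is available, localize at $\tau_n:=\inf\{t:X_t^x\leq l+1/n\}$ (which tends to $\infty$ almost surely by the strong repulsion and the comparison with Bessel processes recorded after \eqref{eq:SDE11}) and apply It\^o's formula: $F_K(X_t^x)$ is a nonnegative local supermartingale, hence a genuine supermartingale. Doob's maximal inequality then yields, for small $\epsilon>0$,
\[
P\Bigl(\inf_{t\geq 0}X_t^x\leq l+\epsilon\Bigr)=P\Bigl(\sup_{t\geq 0}F_K(X_t^x)\geq F_K(l+\epsilon)\Bigr)\leq\frac{F_K(x)}{F_K(l+\epsilon)}\leq C(x)\epsilon^K.
\]
Since $\sup_{t\geq 0}(X_t^x-l)^{-\gamma}=(\inf_{t\geq 0}X_t^x-l)^{-\gamma}$, the layer-cake formula gives
\[
E\Bigl[\sup_{t\geq 0}(X_t^x-l)^{-\gamma}\Bigr]=\int_0^\infty P\Bigl(\inf_{t\geq 0}X_t^x\leq l+u^{-1/\gamma}\Bigr)\,du,
\]
and the integrand is bounded by $1$ for small $u$ and by $C(x)u^{-K/\gamma}$ for large $u$; the integral converges because $K>\gamma$.

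The main obstacle is the global construction of $F_K$ with $\mathcal{L}F_K\leq 0$. The polynomial ansatz near $l$ and the exponential ansatz far from $l$ each satisfy the generator inequality only on their natural domains, and the intermediate region, where neither \eqref{ass:b31} (valid only for $y\in(l,l+1)$) nor the global lower bound $b\geq b_1$ alone kills the diffusion term, requires a careful interpolation. One clean way to sidestep a single globally smooth $F_K$ is to invoke the strong Markov property on successive excursions of $X_t^x$ into a neighborhood of $l$: use the polynomial ansatz to bound, excursion by excursion, the probability of dipping below $l+\epsilon$ by $C\epsilon^K$, and use the exponential profile far from $l$ to show that the expected number of such excursions is finite.
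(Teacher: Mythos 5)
Your strategy — reduce to a tail bound for $\inf_{t\ge 0}X_t^x$ via a nonnegative supermartingale, then integrate — is a genuinely different route from the paper's. The paper does not build a Lyapunov function at all: it applies It\^o's formula to $(X_t^x-l)^{-\gamma}h(X_t^x)$ with a smooth bump $h$ supported in $(l,l+2]$, uses the Burkholder--Davis--Gundy inequality on the stochastic integral, then H\"older and a bootstrap to the exponent $2p(\gamma+1)$, and closes the estimate with Proposition~\ref{prop:asymY}(i) (the time spent in $(l,l+2]$ has all moments finite). The cutoff $h$ is precisely the device that removes the need for the global $F_K$ that your proof requires.

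The Doob and layer-cake steps of your argument are correct, and the condition $K>\gamma$ is exactly what is needed. The gap is the construction of $F_K$, which you acknowledge but do not resolve; and the specific gluing you propose is not just technically delicate, it runs into a real obstruction. With $F_K'<0$ and $F_K''>0$, the requirement $\mathcal L F_K\le 0$ for all admissible $(b,\sigma)$ forces
\[
-\frac{F_K''(y)}{F_K'(y)}\ \le\ \frac{2\,\underline b(y)}{\sigma_2^2},
\qquad \underline b(y):=\inf_{t\ge 0}b(t,y),
\]
and the polynomial $(y-l)^{-K}$ gives $-F_K''/F_K'=(K+1)/(y-l)$, which saturates this bound exactly at $y_*$ with $(y_*-l)^{\gamma_1-1}=2c_1/((K+1)\sigma_2^2)$ and violates it for all $y>y_*$. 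Past $y_*$, the inequality above caps $|F_K'|$ by $|F_K'(y_*)|\exp(-\int_{y_*}^y 2\underline b/\sigma_2^2)$, and one can check that the resulting $\int_{y_*}^\infty |F_K'|$ exceeds $F_K(y_*)=(y_*-l)^{-K}$ when $\gamma_1>1$ and $K$ is large — so $F_K$ would have to cross zero, and no convex interpolation to the exponential profile can save it. The natural and clean fix is to abandon the $(y-l)^{-K}$ ansatz near $l$ and take instead the scale-function-type solution of $\underline b\,F'+\tfrac{\sigma_2^2}{2}F''=0$, i.e.\ $F'(y)=-\exp\bigl(\int_y^{y_\infty}\tfrac{2\underline b}{\sigma_2^2}\bigr)$ with $F(y_\infty)$ chosen large enough. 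This $F$ is decreasing, convex, satisfies $\mathcal L F\le 0$ everywhere because $b\ge\underline b$ and $\sigma\le\sigma_2$, decays exponentially at $+\infty$, and blows up like $\exp\bigl(c\,(y-l)^{1-\gamma_1}\bigr)$ as $y\downarrow l$, which is faster than $(y-l)^{-K}$ for every $K$; a single such $F$ then yields $P(\inf_{t\ge 0}X_t^x\le l+\epsilon)\le F(x)/F(l+\epsilon)=o(\epsilon^K)$ for all $K$ at once and the layer-cake integration finishes for all $\gamma$ simultaneously. Your excursion-based alternative at the end is also salvageable (using the local polynomial supermartingale excursion by excursion together with Proposition~\ref{prop:asymY2} to control the number of excursions), but it needs to be written out carefully — the SDE is time-inhomogeneous, so the strong Markov restarts must use the shifted coefficients $\sigma(r+\cdot,\cdot)$, $b(r+\cdot,\cdot)$ — and it is considerably longer than the scale-function route.
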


\begin{proof}
It is sufficient to show that
\[
E\left[ \sup _{t\in [0,\infty )} (X_t^x -l)^{-\gamma } h(X_t^x)\right] < \infty
\]
where $h \in C^\infty ({\mathbb R})$ satisfies  ${\bf 1}_{(l,l+1]} \leq h \leq {\bf 1}_{(l,l+2]}$.
By It\^o's formula we have
\begin{align*}
&(X_t^x-l)^{-\gamma}h(X_t^x) - (x-l)^{-\gamma}h(x)\\
&= \int _0^t \left( -\gamma (X_s^x-l)^{-\gamma -1}h(X_s^x) + (X_s^x-l)^{-\gamma}h'(X_s^x) \right) \sigma (s,X_s^x) dB_s  \\
&\quad + \int _0^t \left( \frac{1}{2} \gamma (\gamma +1) (X_s^x-l)^{-\gamma -2} \sigma (s,X_s^x)^2 - \gamma (X_s^x-l)^{-\gamma -1} b(s,X_s^x) \right) h(X_s^x) ds \\
&\quad + \int _0^t R(s,X_s^x) ds
\end{align*}
where
\[
R(s,y):= \frac{1}{2}\left( (y-l)^{-\gamma} h''(y) - 2\gamma (y-l)^{-\gamma -1}h'(y)  \right) \sigma (s,y)^2 + (y-l)^{-\gamma} h'(y) b(s,y).
\]
In view of \eqref{ass:b31} there exists $\varepsilon >0$ such that
\begin{equation}\label{eq:propintX01}
\frac{1}{2} \gamma (\gamma +1) (y-l)^{-\gamma -2} \sigma (s,y)^2 - \gamma (y-l)^{-\gamma -1} b(s,y) \leq 0 , \quad y\in (l,l+\varepsilon ),
\end{equation}
and the Burkholder-Davis-Gundy inequality implies
\begin{align*}
&E\left[ \sup _{t\in [0,\infty)} \int _0^t \left( -\gamma (X_s^x-l)^{-\gamma -1}h(X_s^x) + (X_s^x-l)^{-\gamma}h'(X_s^x) \right) \sigma (s,X_s^x) dB_s\right] \\
&\leq C E\left[ \left( \int _0^\infty \left( -\gamma (X_s^x-l)^{-\gamma -1} h(X_s^x) + (X_s^x-l)^{-\gamma} h' (X_s^x) \right) ^2 \sigma (s,X_s^x)^2 ds \right) ^{\frac{1}{2}}\right] \\
&\leq C\left( 1+ \gamma ^2 \sigma _2^2 E\left[ \int _0^\infty \left( (X_s^x-l)^{-2\gamma -2} h(X_s^x)^2 + (X_s^x-l)^{-2\gamma} h' (X_s^x)^2 \right) ds \right] \right) .
\end{align*}
Since $h^2\le h$ and the supports of $h'$ and $h''$ are included in the interval $[l+1,l+2]$, we have
\begin{equation}\label{eq:propintX02}\begin{array}{l}
\displaystyle E\left[ \sup _{t\in [0,\infty)}  (X_t^x-l)^{-\gamma} h(X_t^x)\right] - (x-l)^{-\gamma} h(x)\\
\displaystyle \leq C\left( 1+ \int _0^\infty E\left[ (X_s^x-l)^{-2\gamma -2} h(X_s^x) \right] ds + \int _0^\infty P(X_s^x \leq l+2) ds \right) .
\end{array}\end{equation}
Let $p \in (1,\infty )$.
Note that H\"older's inequality and ${\bf 1}_{(l,l+1]} \leq h \leq {\bf 1}_{(l,l+2]}$ imply
\begin{equation}\label{eq:propintX10}\begin{array}{l}
\displaystyle \int _0^\infty E\left[ (X_s^x-l)^{-2\gamma -2} h(X_s^x) \right] ds \\
\displaystyle \leq \left( \sup _{s\in [0,\infty)} E\left[ (X_s^x-l)^{-2p(\gamma +1)} h(X_s^x) \right] \right) ^{\frac{1}{p}} \left( \int _0^\infty P(X_s^x \leq l+2)^{\frac{p-1}{p}} ds\right) .
\end{array}\end{equation}
We apply a similar argument to above.
By It\^o's formula we have
\begin{align*}
&E\left[ (X_t^x-l)^{-2p(\gamma +1)}h(X_t^x) \right] - (x-l)^{-2p(\gamma +1)}h(x) \\
&=  \int _0^t E\left[ \left( p (\gamma +1)(2p(\gamma +1)+1) (X_s^x-l)^{-2p(\gamma +1)-2} \sigma (s,X_s^x)^2 \right. \right. \\
&\quad \hspace{4cm} \left. \left. \phantom{\frac{1}{2}} - 2p (\gamma +1) (X_s^x-l)^{-2p(\gamma +1)-1} b(s,X_s^x) \right) h(X_s^x) \right] ds \\
&\quad + \int _0^t E\left[ \tilde{R}(s,X_s^x) \right] ds
\end{align*}
where
\begin{align*}
\tilde{R}(s,y) &:= \frac{1}{2}\left[ (y-l)^{-2p(\gamma +1)} h''(y)- 4p(\gamma +1)(y-l)^{-2p(\gamma +1)-1}h'(y) \right] \sigma (s,y)^2 \\
&\qquad + (y-l)^{-2p(\gamma +1)} h'(y) b(s,y).
\end{align*}
Similarly to \eqref{eq:propintX01} there exists $\tilde{\varepsilon} >0$ such that
\begin{equation}\label{eq:propintX03}\begin{array}{l}
\displaystyle p (\gamma +1)(2p(\gamma +1)+1) (y-l)^{-2p(\gamma +1)-2} \sigma (s,y)^2 \\[1mm]
\displaystyle \hspace{2.5cm} - 2p(\gamma +1) (y-l)^{-2p(\gamma +1)-1} b(s,y) \leq 0 , \quad y\in (l,l+\tilde{\varepsilon} ).
\end{array}\end{equation}
Hence, in view of the supports of $h$, $h'$ and $h''$ it follows that
\[
E\left[ (X_t^x-l)^{-2p(\gamma +1)} h(X_t^x)\right] - (x-l)^{-2p(\gamma +1)} h(x) \leq \tilde{C} \left( 1+  \int _0^t P(X_s^x \leq l+2) ds \right) .
\]
From this inequality, \eqref{eq:propintX02} and \eqref{eq:propintX10} we obtain
\begin{align*}
&E\left[ \sup _{t\in [0,\infty)}  (X_t^x-l)^{-\gamma} h(X_t^x)\right] \\
&\leq (x-l)^{-\gamma} h(x) + C \left( 1+ \int _0^\infty P(X_s^x \leq l+2) ds \right) \\
&\quad +C \left( (x-l)^{-2p(\gamma +1)} h(x) + \tilde{C} \left( 1+  \int _0^\infty P(X_s^x \leq l+2) ds \right) \right) ^{\frac{1}{p}} \\
&\quad \hspace{6cm}\times \left( \int _0^\infty P(X_s^x \leq l+2)^{\frac{p-1}{p}} ds\right) .
\end{align*}
Since \eqref{eq:comparison2} and Proposition~\ref{prop:asymY}(i) imply
\[
\int _0^\infty P(X_s^x \leq l+2)^{\frac{p-1}{p}} ds < \infty ,
\]
we obtain the conclusion.
\end{proof}

By applying Proposition \ref{prop:intX} we prove a similar lemma to Lemma \ref{lem:estLpIb}.

\begin{lem}\label{lem:estLpIb2}
\begin{enumerate}
\item[\rm (i)]
Suppose that \eqref{ass:s11} is satisfied with some $\alpha >\frac{1}{2}$. 
Then for any $p\in [1,\infty )$, 
\[
\sup _{x\in [x_0, \infty )} \sup _{t\in [0,\infty )} E\left[ \left| \int _0^t \left( \sigma (s,X_s^x) - \sigma _\infty (s) \right) dB_s \right| ^p \right] \leq C_p .
\]

\item[\rm (ii)]
Suppose that \eqref{ass:b11} is satisfied with some $\beta >1$. 
Then for any $p\in [1,\infty )$, 
\[
\sup _{x\in [x_0, \infty )} \sup _{t\in [0,\infty )} E\left[ \left| \int _0^t \left( b(s,X_s^x) - b_\infty (s) \right) ds \right| ^p \right] \leq C_p .
\]
\end{enumerate}
\end{lem}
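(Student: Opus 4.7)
The proof of (i) can be essentially copied from that of Lemma~\ref{lem:estLpIb}(i). Indeed, \eqref{ass:s11} and \eqref{ass:s21} give the same bounds on $\sigma$ as \eqref{ass:s1} and \eqref{ass:s2} do in Section~\ref{sec:BE}, and the comparison inequality \eqref{eq:comparison2} together with Corollary~\ref{cor:intY} is available. So the plan is: apply the Burkholder-Davis-Gundy inequality, split the integrand using ${\bf 1}_{[1,\infty)}(Y_s^x) + {\bf 1}_{(-\infty,1)}(Y_s^x)$, bound $|\sigma(s,X_s^x)-\sigma_\infty(s)|$ by $2\sigma_2$ on $\{Y_s^x<1\}$ and by $(\sigma_3/\alpha)(Y_s^x)^{-\alpha}$ on $\{Y_s^x\ge 1\}$ (using \eqref{eq:ests1} and \eqref{eq:comparison2}), and finally invoke Corollary~\ref{cor:intY}. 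The assumption $\alpha>\frac{1}{2}$ makes the $(Y_s^x)^{-2\alpha}$ integral $p/2$-th power integrable.

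For (ii), the proof of Lemma~\ref{lem:estLpIb}(ii) breaks down because $b$ need no longer be bounded on the whole of $\mathbb{R}$: on the set $\{X_s^x<1\}$ we can no longer replace $|b(s,X_s^x)-b_\infty(s)|$ by a constant. My plan is to first apply \eqref{eq:int-norm} to reduce to bounding
\[
\int_0^\infty E\bigl[|b(s,X_s^x)-b_\infty(s)|^p\bigr]^{1/p}\,ds<\infty,
\]
and then decompose using the indicators of $\{X_s^x\ge L\}$, $\{l+1\le X_s^x<L\}$ and $\{X_s^x<l+1\}$, where $L:=(l\vee 0)+1$. On $\{X_s^x\ge L\}$ the argument of Lemma~\ref{lem:estLpIb}(ii) goes through via \eqref{eq:estb1}, comparison \eqref{eq:comparison2}, and Proposition~\ref{prop:asymY} (which gives $s^{-\beta p}$ decay together with the exponential estimate on $P(Y_s^x<L)$, integrable since $\beta>1$). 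On $\{l+1\le X_s^x<L\}$ (which is empty if $l\ge 0$), $b$ is bounded by \eqref{ass:b01}, and the expectation is bounded by $P(X_s^x<L)\le P(Y_s^x<L)$, which decays exponentially in $s$ by Proposition~\ref{prop:asymY}(i).

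The main obstacle lies in the singular region $\{X_s^x<l+1\}$. The first step is to turn the assumption \eqref{ass:b31'} into a pointwise bound on $|b|$ near $l$: integrating \eqref{ass:b31'} from $y$ to $l+1$ and using \eqref{ass:b01} at $y=l+1$ gives, for $y\in(l,l+1)$,
\[
|b(t,y)|\le C\bigl(1+(y-l)^{-(\gamma_2-1)}\bigr),
\]
since $\gamma_2>1$. Hence $|b(s,X_s^x)-b_\infty(s)|^p{\bf 1}_{\{X_s^x<l+1\}}\le C(1+(X_s^x-l)^{-p(\gamma_2-1)}){\bf 1}_{\{X_s^x<l+1\}}$. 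The key step is then to apply H\"older's inequality with some conjugate pair $r,r'$ to split the expectation as
\[
E\bigl[(X_s^x-l)^{-pr(\gamma_2-1)}\bigr]^{1/r}\,P(X_s^x<l+1)^{1/r'}.
\]
Proposition~\ref{prop:intX} makes the first factor uniformly bounded in $s$ (since its argument can be made arbitrarily large), while the second factor decays exponentially in $s$ via the comparison $\{X_s^x<l+1\}\subseteq\{Y_s^x<l+1\}$ and Proposition~\ref{prop:asymY}(i). Taking $p$-th roots and integrating in $s$ yields the desired bound. Exactly this interplay between Proposition~\ref{prop:intX} and the exponential tail of $Y$ is what I expect to be the delicate part of the argument; the remaining decompositions are routine extensions of the Section~\ref{sec:BE} proof.
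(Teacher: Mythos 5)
Your proof of (i) is correct and identical in spirit to the paper's, which indeed just remarks that the argument of Lemma~\ref{lem:estLpIb}(i) carries over verbatim.

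For (ii) your argument is correct and uses the same two essential ingredients as the paper, namely Proposition~\ref{prop:intX} to tame the singular factor $(X_s^x-l)^{-\cdot}$ near $l$ and the comparison $Y_t^x\le X_t^x$ together with an estimate on the time $Y$ spends below a level. The route is slightly rearranged, however. You first reduce via \eqref{eq:int-norm} to a pointwise-in-$s$ estimate of $E[|b(s,X_s^x)-b_\infty(s)|^p]^{1/p}$, then on $\{X_s^x<l+1\}$ you use the bound $|b(t,y)|\le C(1+(y-l)^{-(\gamma_2-1)})$ (from integrating \eqref{ass:b31'}), apply H\"older pointwise, and control $P(X_s^x<l+1)\le P(Y_s^x<l+1)$ by the exponential decay of Proposition~\ref{prop:asymY}(i). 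The paper instead keeps the time integral inside the expectation: it writes $|b(s,X_s^x)-b_\infty(s)|\le\int_{X_s^x}^\infty|\partial_y b(s,y)|\,dy$, splits the $y$-integration range rather than the value of $X_s^x$, pulls out the path supremum $\sup_{s\le t}(X_s^x-l)^{-\gamma_2+1}$ before applying H\"older (equation~\eqref{eq:estLpIb2-01}), and then bounds the remaining occupation-time factor $E[(\int_0^\infty{\bf 1}_{(-\infty,l+1)}(Y_s^x)\,ds)^{pq/(q-1)}]$ via Corollary~\ref{cor:intY}(ii). Your version needs only $\sup_s E[(X_s^x-l)^{-\gamma}]<\infty$ (weaker than the path-sup moment that Proposition~\ref{prop:intX} actually provides) plus the exponential tail of Proposition~\ref{prop:asymY}(i); the paper's version trades the exponential tail for the occupation-time moment bound of Corollary~\ref{cor:intY}. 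Both are valid; the paper's form has the small advantage that it gives a uniform constant without separately integrating the exponential tail in $s$, but the content is the same.
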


\begin{proof}
The proof of (i) is the same as that of Lemma~\ref{lem:estLpIb}(i).
So, we omit it.

We prove (ii).
From \eqref{ass:b21}, \eqref{ass:b31'} and \eqref{eq:comparison2} we have  
\begin{align*}
&E\left[ \left| \int _0^t \left( b(s,X_s^x) - b_\infty (s) \right) ds \right| ^p \right] ^{\frac{1}{p}}\\
&\leq E\left[ \left| \int _0^t \left( \int _{X_s^x}^\infty \partial _y b(s,y) dy \right) ds \right| ^p \right] ^{\frac{1}{p}}\\
&\leq E\left[ \left( \int _0^t \left( \int _{X_s^x}^{l+1} c_2(y-l)^{-\gamma _2} dy \right) {\bf 1}_{(l,l+1)}(X_s^x) ds \right) ^p \right] ^{\frac{1}{p}} \\
&\quad + 2 \left( \sup _{(s,y)\in [0,\infty )\times [l+1,\infty )} |b(s,y)| \right) E\left[ \left( \int _0^t {\bf 1}_{[l+1, (l \vee 0) +1)}(X_s^x) ds \right) ^p \right] ^{\frac{1}{p}} \\
&\quad + E\left[ \left( \int _0^t \left( \int _{X_s^x}^{\infty} b_3 y^{-\beta -1} dy \right) {\bf 1}_{[(l \vee 0) +1,\infty)}(X_s^x) ds \right) ^p \right] ^{\frac{1}{p}} \\
&\leq \frac{c_2}{\gamma _2 -1} E\left[ \left( \int _0^t (X_s^x-l)^{-\gamma _2+1} {\bf 1}_{(-\infty ,l+1)}(Y_s^x) ds \right) ^p \right] ^{\frac{1}{p}} \\
&\quad + 2 \left( \sup _{(s,y)\in [0,\infty )\times [l+1,\infty )} |b(s,y)| \right) E\left[ \left( \int _0^t {\bf 1}_{(-\infty ,(l \vee 0) +1)}(Y_s^x) ds \right) ^p \right] ^{\frac{1}{p}} \\
&\quad + \frac{b_3}{\beta} E\left[ \left( \int _0^t (Y_s^x)^{-\beta} {\bf 1}_{[1,\infty)}(Y_s^x) ds + \int _0^t {\bf 1}_{(-\infty , 1)}(Y_s^x) ds \right) ^p \right] ^{\frac{1}{p}} .
\end{align*}
Since H\"older's inequality implies
\begin{equation}\label{eq:estLpIb2-01}
\begin{split}
&E\left[ \left( \int _0^t (X_s^x-l)^{-\gamma _2+1} {\bf 1}_{(-\infty ,l+1)}(Y_s^x) ds \right) ^p \right] \\
&\leq E\left[ \left( \sup _{s\in [0,t]} (X_s^x-l)^{-\gamma _2+1} \right) ^{pq} \right] ^{\frac{1}{q}} E\left[ \left( \int _0^t {\bf 1}_{(-\infty ,l+1)}(Y_s^x) ds \right) ^{\frac{pq}{q-1}} \right] ^{\frac{q-1}{q}}
\end{split}
\end{equation}
for $q\in (1,\infty )$, (ii) follows from Proposition~\ref{prop:intX} and Corollary~\ref{cor:intY}.
\end{proof}

\subsection{Estimates of Malliavin derivatives}

We define the Wiener space $(W,H,\mu )$ as in Section~\ref{subsec:MCbdd}.
Thanks to Lemma~\ref{lem:expest2}, we can prove the following propositions in a similar way to Propositions~\ref{prop:estbDX} and~\ref{prop:estbDXh}.
So, we omit the proofs.

\begin{prop}\label{prop:estbDX2}
Let Assumption~{\rm \ref{ass2}} and \eqref{ass:sb1} hold, and let
\[
S_t^x:= \int _0^t (\sigma (s,X_s^x) -\sigma _\infty (s))dB_s + \int _0^t (b(s,X_s^x ) -b_\infty (s)) ds.
\]
\begin{enumerate}
\item[{\rm (i)}] It holds that for any $t\in [0,1]$, 
\[
E \left[ \left| D S_t^x \right| _H^2 \right] \leq Ct.
\]
\item[{\rm (ii)}] It holds that for $t\ge 1$, 
\[
E \left[ \left| D S_t^x \right| _H^2 \right]
\le 
C\times 
\begin{dcases}
t^{1-2(\alpha \wedge \beta)} & \left( 0< \alpha \wedge \beta < \frac{1}{2}\right), \\
1+\log t & \left( \alpha \wedge \beta = \frac{1}{2} \right), \\
1 & \left( \alpha \wedge \beta > \frac{1}{2}\right) .
\end{dcases}
\]
\end{enumerate}
\end{prop}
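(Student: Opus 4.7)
The plan is to transport the proof of Proposition~\ref{prop:estbDX} from the bounded-coefficient setting of Section~\ref{sec:BE} to the current setting, since the assumption \eqref{ass:sb1} has the same form as \eqref{ass:sb} and Lemma~\ref{lem:expest2} plays the role of Lemma~\ref{lem:expest}. First I would verify that the Malliavin derivative identities \eqref{eq:DXh} and \eqref{eq:DSh} remain valid for the solution $X_t^x$ of \eqref{eq:SDE11}. Because $X_t^x\in(l,\infty)$ for all $t\geq 0$ almost surely (thanks to Bessel comparison via \eqref{ass:b31}) and its negative moments are controlled by Proposition~\ref{prop:intX}, one can localize by stopping times $\tau_n:=\inf\{t\geq 0:X_t^x\leq l+1/n\}$, apply the Malliavin calculus for SDEs with smooth coefficients obtained by cutting off $\sigma$ and $b$ outside $[l+1/n,\infty)$, and pass to the limit $n\to\infty$ using Proposition~\ref{prop:intX}. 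Once this is done, Parseval gives
\[
E\bigl[|DS_t^x|_H^2\bigr]=\int_0^t E\bigl[(e^{Z_t^x-Z_s^x}\sigma(s,X_s^x)-\sigma_\infty(s))^2\bigr]\,ds,
\]
with $Z_t^x$ as in \eqref{eq:z-def}, which is the analogue of \eqref{eq:DSh-norm}.

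For part~(i), I would split the integrand via the inequality
\[
(e^{Z_t^x-Z_s^x}\sigma(s,X_s^x)-\sigma_\infty(s))^2\leq 2(\sigma(s,X_s^x)-\sigma_\infty(s))^2 e^{2(Z_t^x-Z_s^x)}+2\sigma_\infty(s)^2(e^{Z_t^x-Z_s^x}-1)^2,
\]
and apply H\"older's inequality with exponents $p,q,r\in(1,\infty)$ satisfying $p^{-1}+q^{-1}+r^{-1}=1$ and
\[
\sup_{(t,y)}\bigl(q(2p-1)[\partial_y\sigma]^2+2q\,\partial_yb\bigr)<\tfrac{\sigma_1^2 b_1^2}{2},
\]
chosen as in the proof of Proposition~\ref{prop:estbDX} using \eqref{ass:sb1}. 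The $p$-factor vanishes by the Girsanov/exponential martingale argument, the $q$-factor is bounded by $C$ uniformly in $s,t$ by Lemma~\ref{lem:expest2}, and the $r$-factor is trivially bounded by $(2\sigma_2)^{2}$. This gives $E[|DS_t^x|_H^2]\leq Ct$ for $t\in[0,1]$.

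For part~(ii), I would follow the derivation from \eqref{eq:DSh-est-3} through \eqref{eq:DSh-est} verbatim: split on whether $\inf_{v\in[s,t]}Y_v^x<1$ or not, apply It\^o's formula to $e^{Z_t^x}-e^{Z_s^x}$ and H\"older with the same exponents, reducing the problem to estimating $P(\inf_{v\in[s,t]}Y_v^x<1)$, and the quantities $E[|\sigma(s,X_s^x)-\sigma_\infty(s)|^{2r}]$, $E[|\partial_y\sigma(u,X_u^x)|^{2r}]$ and $E[|\partial_y b(u,X_u^x)|^{2r}\mathbf{1}_{[1,\infty)}(Y_u^x)]$. The first is exponentially small in $s$ by Proposition~\ref{prop:asymY}(i). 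The latter three are bounded on $\{Y\geq 1\}$ using \eqref{eq:ests1}, \eqref{ass:s21}, \eqref{ass:b21} together with the comparison \eqref{eq:comparison2} and then controlled by Proposition~\ref{prop:asymY}(ii), yielding decay of order $s^{-2\alpha}$, $s^{-2\alpha-1}$ and $s^{-\beta}$, precisely as in \eqref{eq:asys}--\eqref{eq:asyb}. Integrating in $s$ against the three regimes $\alpha\wedge\beta<1/2$, $=1/2$, $>1/2$ produces the stated bound.

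The main obstacle I anticipate is the singular behaviour of $\partial_y b$ near $y=l$ imposed by \eqref{ass:b31'}, which enters both the definition of $Z_t^x$ and the Hölder estimate of $E[|\partial_y b(u,X_u^x)|^{2r}\mathbf{1}_{[1,\infty)}(Y_u^x)]$. The key observation is that the indicator $\mathbf{1}_{[1,\infty)}(Y_u^x)$ already excludes the region where $\partial_y b$ is singular (since $Y_u^x\leq X_u^x$ and the singular region is $\{X<l+1\}\subset\{Y<l+1\}\subset\{Y<1\}$ when $l\geq 0$; for $l<0$ one refines the threshold to $(l\vee 0)+1$ matching \eqref{ass:b21}). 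The complementary contribution from $\{\inf_{v\in[s,t]}Y_v^x<1\}$ is absorbed into the exponentially small term supplied by Proposition~\ref{prop:asymY}(i), so the potential singularity never interacts with the rate of decay in $t$.
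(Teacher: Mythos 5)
Your proposal is correct and follows the same route the paper intends (the paper explicitly omits this proof, stating it goes ``in a similar way'' to Propositions~\ref{prop:estbDX}--\ref{prop:estbDXh} via Lemma~\ref{lem:expest2}); your localization argument for Malliavin differentiability and your handling of the singular drift near $y=l$ via the indicator on $Y$ and Proposition~\ref{prop:asymY}(i) are exactly the missing ingredients. One small slip in your aside on thresholds: the containment $\{Y<l+1\}\subset\{Y<1\}$ holds when $l\le 0$, not $l\ge 0$, and the refinement of the cutoff to $(l\vee 0)+1$ is needed precisely when $l>0$; since you ultimately do use the threshold $(l\vee 0)+1$ (matching \eqref{ass:b21} and \eqref{eq:ests1}--\eqref{eq:estb1}), this mislabeling of cases does not affect the argument.
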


\begin{prop}\label{prop:estbDXh2}
Let Assumption~{\rm \ref{ass2}} and \eqref{ass:sb1} hold. 
Define $S_t^x$ as in Proposition~{\rm \ref{prop:estbDX2}}.
\begin{enumerate}
\item[{\rm (i)}] 
It holds that
\[
E \left[ \left| \left\langle \int _0^\cdot \sigma _\infty (s) {\mathbf 1}_{[0,t]}(s) ds, DS_t^x \right\rangle _H \right| \right] \leq C t, \quad t\in [0,1].
\]
\item[{\rm (ii)}]
It holds that for $t\geq 1$, 
\begin{align*}
&E \left[ \left| \left\langle \int _0^\cdot \sigma _\infty (s) {\mathbf 1}_{[0,t]}(s) ds, DS_t^x \right\rangle _H \right| \right] \leq C \times 
\begin{dcases}
t^{1-(\alpha \wedge \beta)} & \left( 0< \alpha \wedge \beta < 1\right), \\
1+\log t & \left( \alpha \wedge \beta = 1 \right), \\
1 & \left( \alpha \wedge \beta > 1\right) .
\end{dcases}
\end{align*}
\end{enumerate}
\end{prop}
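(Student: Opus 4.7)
The plan is to mirror the proof of Proposition~\ref{prop:estbDXh} line by line, substituting Lemma~\ref{lem:expest2} for Lemma~\ref{lem:expest}, the comparison \eqref{eq:comparison2} for \eqref{eq:comparison}, and using \eqref{eq:ests1}--\eqref{eq:estb1} in place of \eqref{eq:ests}--\eqref{eq:estb}. First I would apply formula \eqref{eq:DSh} with the specific choice $h=\int_0^\cdot \sigma_\infty(s){\bf 1}_{[0,t]}(s)\,ds\in H$ to get
\[
\left\langle \int_0^\cdot \sigma_\infty(s){\bf 1}_{[0,t]}(s)\,ds,\,DS_t^x\right\rangle_H = \int_0^t \sigma_\infty(s)(\sigma(s,X_s^x)-\sigma_\infty(s))e^{Z_t^x-Z_s^x}\,ds + \int_0^t \sigma_\infty(s)^2 (e^{Z_t^x-Z_s^x}-1)\,ds.
\]
Taking absolute values, bounding $|\sigma_\infty|\le \sigma_2$, and applying the Cauchy--Schwarz inequality pointwise in $s$ reduces everything to controlling $\int_0^t E[(\sigma(s,X_s^x)-\sigma_\infty(s))^2 e^{2(Z_t^x-Z_s^x)}]^{1/2}\,ds$ and $\int_0^t E[(e^{Z_t^x-Z_s^x}-1)^2]^{1/2}\,ds$.

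For the first integral, I would run the Girsanov--H\"older argument used in Proposition~\ref{prop:estbDX}(i): choose $p,q,r>1$ with $p^{-1}+q^{-1}+r^{-1}=1$ so that \eqref{ass:sb1} allows both the Girsanov exponential and the auxiliary exponential of $q(2p-1)[\partial_y\sigma]^2+2q\partial_y b$ to have finite expectation bounded via Lemma~\ref{lem:expest2}. The residual moment $E[|\sigma(s,X_s^x)-\sigma_\infty(s)|^{2r}]^{1/r}$ is then handled by \eqref{eq:ests1}, \eqref{eq:comparison2}, and Proposition~\ref{prop:asymY}, recovering precisely the bound \eqref{eq:asys}.

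For the second integral, I would expand $e^{Z_t^x-Z_s^x}-1$ via It\^o's formula into two terms involving $\partial_y\sigma(u,X_u^x)$ and $\partial_y b(u,X_u^x)$, and split the expectation according to whether $\inf_{v\in[s,t]}Y_v^x<1$ (governed by the exponential tail in Proposition~\ref{prop:asymY}(i)) or $\geq 1$ (where the polynomial bounds from \eqref{ass:s21} and \eqref{ass:b21} apply through \eqref{eq:comparison2}). Using the same H\"older splitting with the exponential control from Lemma~\ref{lem:expest2}, one obtains the decays \eqref{eq:asys2} and \eqref{eq:asyb}. Integrating over $s\in[0,t]$, the decisive contribution is $\int_1^t s^{-(\alpha\wedge\beta)}\,ds$, which yields the three-case split in (ii); on $[0,1]$ every integrand is uniformly bounded in $s$, producing the $Ct$ bound of (i).

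The main obstacle is essentially bookkeeping: one must verify that condition \eqref{ass:sb1} admits a triple $(p,q,r)$ for which every exponential produced by the Girsanov--H\"older decomposition remains uniformly below the threshold $\sigma_1^2 b_1^2/2$ required by Lemma~\ref{lem:expest2}. Once this selection is pinned down, the remainder of the computation is an almost verbatim replay of Section~\ref{sec:BE}. The one genuinely new point relative to Section~\ref{sec:BE} is the divergence of $b$ near $y=l$, but $\partial_y b$ only ever enters through the exponential $e^{Z_t^x-Z_s^x}$, where this divergence is tamed by Lemma~\ref{lem:expest2} precisely because \eqref{ass:sb1} is a bound uniform in $(t,y)\in[0,\infty)\times\mathbb{R}$.
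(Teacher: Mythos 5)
The proposal is correct and takes the same approach as the paper: the paper in fact omits the proof, stating only that Propositions~\ref{prop:estbDX2} and~\ref{prop:estbDXh2} ``can be proved in a similar way to Propositions~\ref{prop:estbDX} and~\ref{prop:estbDXh}'' using Lemma~\ref{lem:expest2} in place of Lemma~\ref{lem:expest}, and your proposal spells out exactly that substitution pattern along with \eqref{eq:comparison2}, \eqref{eq:ests1}--\eqref{eq:estb1}, and the decay estimates \eqref{eq:asys}, \eqref{eq:asys2}, \eqref{eq:asyb}. One small imprecision in your closing remark: it is not true that $\partial_y b$ ``only ever enters through the exponential $e^{Z_t^x-Z_s^x}$''; in the bound \eqref{eq:DSh-est-2} the residual moment $E\bigl[|\partial_y b(u,X_u^x)|^{2r}\mathbf{1}_{[1,\infty)}(Y_u^x)\bigr]^{1/r}$ appears explicitly, and there the divergence near $l$ is handled not by Lemma~\ref{lem:expest2} but by the indicator together with \eqref{eq:comparison2}, which confines $X_u^x$ away from the singularity (strictly one should split at $(l\vee 0)+1$ rather than $1$ so that \eqref{ass:b21} applies); this does not affect the validity of the overall argument.
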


\begin{rem}
Since $X_t^x$ satisfies \eqref{eq:SDE11}, if $DX_t^x$ is well-defined as an $H$-valued random variable, 
$DS_t^x$ is also well-defined and satisfies 
\[
\langle DS_t^x , h\rangle _H = \langle DX_t^x, h\rangle _H - \int _0^t \sigma _\infty (s) dh(s) , \quad h\in H.
\]
For the existence of $DX_t^x$ see the proof of Theorem~{\rm \ref{thm:BE2}} below.
\end{rem}

\subsection{Proof of Theorem~\ref{thm:BE2}}

\begin{proof}[Proof of Theorem~{\rm \ref{thm:BE2}}]
Let 
\[
F_t^x := \frac{X_t^x - x - t \overline{b _\infty}(t)}{\overline{\sigma _\infty}(t) \sqrt{t}} , \quad t>0.
\]
In view of \eqref{eq:MS2}, it is sufficient to show that $F_t^x \in {\cal D}^{1,2}$ for $t\geq 1$ and that for some constant $C>0$,
\begin{equation}\label{eq:BE2-01}
E\left[ \left| 1- \left\langle D (-L)^{-1} \left( F_t^x - E[F_t^x] \right), DF_t^x \right\rangle _H \right| \right] + |E[F_t^x]| \leq \frac{C}{\sqrt{t}}, \quad t\geq 1. 
\end{equation}
Similarly to the proof of Theorem~\ref{thm:BE1}, we have
\[
F_t^x = \frac{1}{\overline{\sigma _\infty}(t)\sqrt{t}} \int _0^t \sigma _\infty (s) dB_s + \frac{1}{\sqrt{t}} R_t^x
\]
where
\[
R_t^x:= \frac{1}{\overline{\sigma _\infty}(t)}\int _0^t (\sigma (s,X_s^x) -\sigma _\infty (s) )dB_s + \frac{1}{\overline{\sigma _\infty}(t)} \int _0^t (b(s,X_s^x ) -b_\infty (s) ) ds.
\]
Since $X_t^x$ does not hit $l$, from \eqref{ass:s11}, \eqref{ass:s21} and \eqref{ass:b01} we see that $X_t^x \in {\cal D}_{\rm loc}^{1,2}$ and in particular we are able to apply the Malliavin calculus to $X_t^x$ (see \cite[page 49]{Nu06}).
This implies that $F_t^x \in {\cal D}_{\rm loc}^{1,2}$.
From Proposition~\ref{prop:estbDX}(i), $F_t^x \in {\cal D}^{1,2}$ also follows.

The rest of the proof is similar to the proof of Theorem~\ref{thm:BE1}, because we can apply Propositions~\ref{prop:estbDX2} and~\ref{prop:estbDXh2} and Lemma~\ref{lem:estLpIb2}, instead of Propositions~\ref{prop:estbDX} and~\ref{prop:estbDXh} and Lemma~\ref{lem:estLpIb}, respectively.
So, we omit the proof.
\end{proof}

\subsection{Relaxation of Assumption \ref{ass2} and sharpness of Theorem \ref{thm:BE2}}
\label{sec:relax}
In this subsection, we examine the sharpness of Theorem~\ref{thm:BE2} 
by applying it to the Brownian motion on a hyperbolic space. 
In order to do so, we concern the relaxation of the condition \eqref{ass:b31} in Assumption~ \ref{ass2}. 
More precisely, we replace the condition \eqref{ass:b31} by  
\begin{equation}\label{ass:b31-1}\tag{4.$b$3-1}
\sup_{t\in [0,\infty)} b(t,y) \geq c_1(y-l)^{-1}, \quad y\in (l,l+1),
\end{equation}
that is, we assume that \eqref{ass:b31} is satisfied with $\gamma_1=1$. 
We begin by explaining how this replacement affects the argument for the proof of Theorem \ref{thm:BE2}. 

In what follows, we assume that \eqref{ass:b31-1} is fulfilled. 
We first reconsider the proof of Proposition~\ref{prop:intX}. 
Under the condition \eqref{ass:b31-1}, 
we only need to change the argument in \eqref{eq:propintX01} and \eqref{eq:propintX03}. 
In fact, \eqref{eq:propintX01} is valid if  
$0<\gamma < \frac{2c_1}{\sigma_2^2}-1$, and 
\eqref{eq:propintX03} is valid if 
$0<\gamma \leq \frac{1}{2p} \left( \frac{2c_1}{\sigma_2^2}-1 \right) -1$.
Moreover, the proof of Proposition~\ref{prop:intX} works for any $p\in (1,\infty )$.
Hence, the assertion of Proposition~\ref{prop:intX} holds 
when $\frac{2c_1}{\sigma_2^2}>3$ and 
\begin{equation}\label{eq:gamma_1}
0<\gamma < \frac{1}{2}\left( \frac{2c_1}{\sigma_2^2}-3 \right) .
\end{equation}

We next reconsider the proof of Lemma~\ref{lem:estLpIb2}(ii). 
To show that the last expression in \eqref{eq:estLpIb2-01} is bounded,  
Proposition~\ref{prop:intX} is used with $\gamma=pq(\gamma_2-1)$.
Noting that the proof of Lemma~\ref{lem:estLpIb2}(ii) works for any $q\in (1,\infty )$ and taking \eqref{eq:gamma_1} into account, we see that when $\frac{2c_1}{\sigma_2^2}>3$ and 
\[
1<\gamma_2 < 1+\frac{1}{2p}\left(\frac{2c_1}{\sigma_2^2}-3\right),
\]
then the assertion of Lemma~\ref{lem:estLpIb2}(ii) still holds.

We finally note that, for the proof of Theorem~\ref{thm:BE2}, 
we apply Lemma~\ref{lem:estLpIb2}(ii) with $p=1$ and $p=2$. 
Therefore, when $\frac{2c_1}{\sigma_2^2}>3$ and 
\begin{equation}\label{eq:gamma_2}
1<\gamma_2 < 1+\frac{1}{4}\left(\frac{2c_1}{\sigma_2^2}-3\right),
\end{equation}
then Lemma~\ref{lem:estLpIb2}(ii) is valid with $p=1$ and $p=2$, 
and so Theorem~\ref{thm:BE2} holds. 

As a consequence of the argument above, 
we have the following: assume that 
\begin{itemize}
\item[(a)] Assumption~\ref{ass2} except \eqref{ass:b31} holds, and instead, \eqref{ass:b31-1} holds,
\item[(b)] $\frac{2c_1}{\sigma_2^2}>3$, and 
the constant $\gamma_2$ in \eqref{ass:b31'} satisfies \eqref{eq:gamma_2}. 
\end{itemize}
Then, Theorem \ref{thm:BE2} holds. 

As an application of the argument above, 
we discuss the Berry-Esseen bound for the Brownian motion on a hyperbolic space. 
For $d\geq 2$, let ${\mathbb H}^d$ be the $d$-dimensional hyperbolic space with a pole $o$,
and let $d=d_{{\mathbb H}^d}$ be the associated distance.
Let $X=(\{X_t\}_{t\ge 0},\{P_x\}_{x\in {\mathbb H}^d})$ be the Brownian motion on ${\mathbb H}^d$
generated by the half of the Laplace-Beltrami operator.
Let $R_t^{(d)}=d(o,X_t)$ be the radial process and $P=P_o$.
We know that $R_t^d$ satisfies the stochastic differential equation 
\begin{equation}\label{eq:HBM}
R_t^{(d)}=B_t+\frac{d-1}{2}\int_0^t \coth R_s^{(d)} ds,
\end{equation}
where $B_t$ is the one-dimensional Brownian motion (see \cite{Shi23} and the references therein for details).

In order to apply Theorem \ref{thm:BE2} to \eqref{eq:HBM}, 
we take 
\[
\sigma(t,y)=1, \quad b(t,y)=\frac{d-1}{2}\coth y, \quad l=0.
\]  
Since
\[
b(t,y)\ge \frac{d-1}{2y} \quad (y>0)
\]
and 
\[
\partial_y b(t,y)=-\frac{d-1}{2\sinh^2 y}\ge  -\frac{d-1}{2y^2} \quad (y>0), 
\]
we have \eqref{ass:b31-1} with $c_1=\frac{d-1}{2}$. 
We can also verify Assumption~\ref{ass2} except \eqref{ass:b31} with $\sigma_1=\sigma_2=1$ and $\gamma_2=2$.
Therefore, (a) is satisfied. 
If we further assume that $d>8$, then \eqref{eq:gamma_2} is satisfied with $\gamma_2=2$ 
and so (b) is satisfied.  
We can thus apply Theorem~\ref{thm:BE2} to the radial process $R_t^{(d)}$ for $d> 8$.
On the other hand, in \cite{Shi23} it is proved that 
the assertion is true for $d\ge 2$, 
together with the sharpness of the convergence rate $t^{-\frac{1}{2}}$ for odd dimensions or $d=2$.  
This means that the order  $t^{-\frac{1}{2}}$ in Theorem \ref{thm:BE2} is sharp.

The reason why we need the stronger condition $d>8$ in order to apply the result in the present paper seems that we consider the general cases.
Indeed, because of the diffusion coefficient $\sigma$, we have to apply the Burkholder-Davis-Gundy inequality, which creates some moments different from those coming from drift terms.
Moreover, if $\sigma$ is just a constant and $b$ is non-increasing, the assumption \eqref{ass:sb1} is trivial and the argument will be simplified.
Since the purpose of the present paper is considering general cases, we do not discuss further details.

\section{Under the assumptions on the asymptotics of coefficients}\label{sec:BEasypm}

In this section we discuss the Berry-Esseen bound under the assumptions only on the asymptotics of $\sigma$ and $b$.
Let $l \in {\mathbb R}\cup \{ -\infty \}$ and $\sigma , b\in C^{0,1}([0,\infty ) \times (l ,\infty ))$. 
We now impose the assumptions on $\sigma$ and $b$ as follows.

\begin{ass}\label{ass3}
\begin{itemize}


\item There exist constants $\sigma _1, \sigma _2\in (0,\infty )$ such that
\begin{equation}\label{ass:s12}\tag{5.$\sigma$1}
\sigma _1 \leq \sigma (t,y) \leq \sigma _2, \quad (t,y)\in [0,\infty ) \times (l, \infty ) .
\end{equation}

\item There exists a constant $\alpha \in (0, \infty )$ such that
\begin{equation}\label{ass:s22}\tag{5.$\sigma$2}
\limsup_{y\rightarrow\infty }y^{\alpha+1}\sup _{t\in [0,\infty )} \left| \frac{\partial}{\partial y} \sigma (t,y)\right|<\infty.
\end{equation}


\item For any $\tilde{y}\in (l ,\infty )$ it holds that
\begin{equation}\label{ass:b02}\tag{5.$b$0}
\sup _{(t,y) \in [0,\infty )\times [\tilde{y},\infty )}\left( |b(t,y)| + \left| \frac{\partial}{\partial y}b(t,y)\right| \right) < \infty.
\end{equation}

\item There exists a constant $b _1 \in (0,\infty )$ such that
\begin{equation}\label{ass:b12}\tag{5.$b$1}
b _1 \leq b (t,y) , \quad (t,y)\in [0,\infty )\times (l, \infty ) .
\end{equation}

\item There exists a constant $\beta \in (0, \infty )$ such that
\begin{equation}\label{ass:b22}\tag{5.$b$2}
\limsup_{y\rightarrow \infty} y^{\beta +1} \sup _{t\in [0,\infty )} \left| \frac{\partial}{\partial y} b(t,y)\right| <\infty.
\end{equation}

\end{itemize}
\end{ass}

Consider the solution $X_t^x$ of the SDE:
\begin{equation}\label{eq:SDE12}\left\{\begin{array}{l}
dX_t^x = \sigma (t,X_t^x) dB_t + b(t,X_t^x)dt, \\
X_0^x =x \in (l,\infty ).
\end{array}\right.\end{equation}
Similarly to Section~\ref{sec:BEunbdd}, the solution $X_t^x$ exists pathwise-uniquely up to the explosion time and we assume the explosion time is infinity.
Also, similarly to the argument in Section~\ref{sec:BE}, by \eqref{ass:s22} and \eqref{ass:b22} 
we have $\sigma _\infty \in C([0,\infty ); [\sigma _1, \sigma _2])$, $b_2\in [b_1,\infty)$, $b_\infty \in C([0,\infty ); [b_1, b_2])$, $y_1\in (l \vee 0, \infty )$ and $\sigma _3, b_3\in {\mathbb R}$ such that
\begin{align}
\label{eq:ests2} & \sup _{t\in [0,\infty )} |\sigma (t,y)-\sigma _\infty (t)| \leq \frac{\sigma _3 }{\alpha} y^{-\alpha}, \quad y\geq y_1,\\
\label{eq:estb2} & \sup _{t\in [0,\infty )} |b(t,y)-b_\infty (t)| \leq \frac{b_3}{\beta} y^{-\beta}, \quad y\geq y_1.
\end{align}
Similarly to Section~\ref{sec:BE}, let
\[
\overline{\sigma _\infty}(t) := \sqrt{\frac{1}{t} \int _0^t \sigma _\infty (s)^2 ds} \quad \mbox{and} \quad \overline{b _\infty}(t) := \frac{1}{t} \int _0^t b_\infty (s) ds. 
\]

Note that by the assumption \eqref{ass:b12}, we also have \eqref{eq:comparison2}.
Now we prove the following theorem.

\begin{thm}\label{thm:BE3}
Let Assumption~{\rm \ref{ass3}} with $\alpha >\frac{1}{2}$ and $\beta >1$ hold.
Let $x\in (l,\infty )$, and let $X_t^x$ be the solution to \eqref{eq:SDE12}. 
Assume that $X_t^x$ does not hit $l$ almost surely.
Furthermore, assume that there exists a constant $q\in (1,\infty )$ such that 
\begin{equation}\label{ass:sb2}
\limsup _{y\rightarrow \infty} \sup _{t\in [0,\infty )} \left( \frac{q(q+1)}{q-1} \left( \frac{\partial \sigma }{\partial y} (t,y) \right) ^2 +2q \frac{\partial b}{\partial y} (t,y) \right) < \frac{\sigma _1^2 b_1^2}{2}.
\end{equation}
Then there exists a positive constant $C$ such that 
\[
d_{\rm  TV} \left( {\rm Law} \left(\frac{X_t^x -x - t \overline{b _\infty}(t)}{\overline{\sigma _\infty}(t) \sqrt{t}} \right) , {\rm Law}(Z)\right) \leq \frac{C\log t}{\sqrt{t}}, \quad t\geq 2,
\]
where $Z$ is a random variable with the standard normal distribution.
\end{thm}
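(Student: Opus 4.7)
The plan is to reduce Theorem~\ref{thm:BE3} to Theorem~\ref{thm:BE1} via the time-inhomogeneous strong Markov property at a burn-in time $t_1 = c\log t$. After this much time the process has, with high probability, escaped into the region where the asymptotic conditions of Assumption~\ref{ass3} translate into the pointwise conditions of Assumption~\ref{ass1}; on that event $X$ coincides with a process whose coefficients satisfy Assumption~\ref{ass1} globally, and Theorem~\ref{thm:BE1} can be invoked directly.

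Concretely, I would first choose $L \geq y_1$ so large that \eqref{ass:s22}, \eqref{ass:b22} and \eqref{ass:sb2} give the pointwise bounds \eqref{ass:s2}, \eqref{ass:b2} and \eqref{ass:sb} throughout $[L,\infty)$, and construct extensions $\tilde\sigma,\tilde b\in C^{0,1}([0,\infty)\times\mathbb R)$ that agree with $\sigma,b$ on $[0,\infty)\times[L+1,\infty)$ and satisfy Assumption~\ref{ass1} together with \eqref{ass:sb} globally---e.g.\ by setting $\tilde\sigma(t,y)=\sigma(t,L)$, $\tilde b(t,y)=b(t,L)$ for $y\leq L$ and smoothly interpolating on $[L,L+1]$, enlarging $L$ further if needed so that the interpolation-induced derivatives lie within the slack of \eqref{ass:sb2}. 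Applying the time-inhomogeneous strong Markov property at $t_1$, the conditional law of $(X_{t_1+s}^x)_{s\geq 0}$ given $X_{t_1}^x=y$ is that of the solution of the SDE with time-shifted coefficients $\sigma(\cdot+t_1,\cdot),b(\cdot+t_1,\cdot)$ starting from $y$, which on the good event $A_* := \{X_{t_1}^x\geq x_0\}\cap\{\inf_{s\geq t_1}X_s^x\geq L+1\}$ coincides with the corresponding shifted version driven by $(\tilde\sigma,\tilde b)$. The comparison \eqref{eq:comparison2} together with Propositions~\ref{prop:asymY} and~\ref{prop:asymY2} applied to $Y$ yields $P(A_*^c)\leq Ce^{-c't_1}\leq Ct^{-1/2}$ for $c$ large, and on $A_*$ Theorem~\ref{thm:BE1} applied to the shifted extended process gives, writing $\hat b_\infty(s):=b_\infty(s+t_1)$ and $\hat\sigma_\infty(s):=\sigma_\infty(s+t_1)$,
\[
d_{\mathrm{TV}}\bigl(\mathrm{Law}(G_t\mid X_{t_1}^x),\mathrm{Law}(Z)\bigr)\leq \frac{C}{\sqrt{t-t_1}},\qquad G_t:=\frac{X_t^x - X_{t_1}^x-(t-t_1)\overline{\hat b_\infty}(t-t_1)}{\overline{\hat\sigma_\infty}(t-t_1)\sqrt{t-t_1}}.
\]

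Finally, the target statistic admits the affine decomposition $F_t^x=A_t+B_tG_t$ with
\[
A_t:=\frac{X_{t_1}^x-x-t_1\overline{b_\infty}(t_1)}{\overline{\sigma_\infty}(t)\sqrt t},\qquad B_t:=\frac{\overline{\hat\sigma_\infty}(t-t_1)\sqrt{t-t_1}}{\overline{\sigma_\infty}(t)\sqrt t},
\]
both measurable with respect to $X_{t_1}^x$ and $B_t>0$, so conditioning on $X_{t_1}^x$ and using that positive-scale affine maps preserve TV gives
\[
d_{\mathrm{TV}}(\mathrm{Law}(F_t^x),\mathrm{Law}(Z))\leq E\bigl[d_{\mathrm{TV}}(\mathrm{Law}(G_t\mid X_{t_1}^x),\mathrm{Law}(Z))\bigr]+E\bigl[d_{\mathrm{TV}}(N(A_t,B_t^2),N(0,1))\bigr].
\]
The first summand is $O(1/\sqrt t)$ by the previous step, while the Appendix bounds the second by $C(E[|A_t|]+|B_t^2-1|)$. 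A direct computation gives $|B_t^2-1|\leq Ct_1/t=C\log t/t$, and It\^o's isometry on the martingale part of $X_{t_1}^x-x-t_1\overline{b_\infty}(t_1)$ contributes $O(\sqrt{t_1})$ while a crude bound on the drift-difference integral contributes $O(t_1)$, so altogether $E[|A_t|]\leq C\log t/\sqrt t$. Summing all contributions produces the announced rate $C\log t/\sqrt t$ for $t\geq 2$.

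The main technical obstacle I expect lies in Step~1: \eqref{ass:sb} has to hold \emph{pointwise} for $(\tilde\sigma,\tilde b)$, whereas \eqref{ass:sb2} is only postulated asymptotically, so $L$ must be chosen large enough that the slack in \eqref{ass:sb2} absorbs both the residual decay of $\partial_y\sigma,\partial_y b$ and any interpolation artefacts on the transition window $[L,L+1]$. Once the extension is in place the remaining estimates are a bookkeeping combination of the tools developed in the earlier sections.
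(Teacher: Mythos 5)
Your overall architecture---burn in for a time of order $\log t$, restart, apply Theorem~\ref{thm:BE1} to an extended process $(\tilde\sigma,\tilde b)$ with globally regular coefficients, then recombine using the Appendix estimate and a union bound over bad events---matches the structure of the paper's proof, and the construction of the extension is essentially what the paper does. The crucial departure is \emph{where} you restart: you condition on the deterministic time $t_1=c\log t$, whereas the paper applies the strong Markov property at the \emph{hitting time} $\tau_{L_1\log t}(X^x)$ of a deterministic level $L_1\log t$. This is not cosmetic; it is exactly what makes the paper's argument close.

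With your decomposition,
\[
A_t=\frac{1}{\overline{\sigma_\infty}(t)\sqrt t}\left(\int_0^{t_1}\sigma(s,X_s^x)\,dB_s+\int_0^{t_1}\bigl(b(s,X_s^x)-b_\infty(s)\bigr)\,ds\right),
\]
and while the martingale part is indeed $O(\sqrt{t_1})$ in $L^1$, the ``crude $O(t_1)$'' claim for the drift-difference integral requires $|b(s,X_s^x)-b_\infty(s)|$ to be bounded. Under Assumption~\ref{ass3} it need not be: \eqref{ass:b02} allows $b$ to blow up as $y\downarrow l$, which is precisely the situation Section~\ref{sec:BEasypm} is designed to cover, and the hypothesis ``$X_t^x$ does not hit $l$ a.s.'' gives no quantitative control near $l$. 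Nothing like Proposition~\ref{prop:intX} is available here, because Assumption~\ref{ass3} lacks the explicit near-$l$ conditions \eqref{ass:b31} and \eqref{ass:b31'}. So $E[|A_t|]$---and even its finiteness---is not controlled by the tools at hand. The paper's hitting-time restart sidesteps this entirely: since $X_{\tau_{L_1\log t}}^x=L_1\log t$ deterministically, the affine shift $\eta_{t,r}$ appearing after conditioning involves only $x$, $L_1\log t$ and $\int_0^r b_\infty(s)\,ds$, and is trivially $O(t^{-1/2}\log t)$ for $r\le L_2\log t$; the probabilities that the hitting time is large, or that the restarted path drops below $\log t$, are handled via the comparison process $Y$ and Propositions~\ref{prop:asymY} and \ref{prop:asymY2} and are $O(t^{-1/2})$. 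Your bad-event bound and your scale-factor computation $|B_t^2-1|=O(t_1/t)$ are fine; the sole obstruction is the random-starting-point shift, and to repair the argument you would effectively have to replace the deterministic restart time by a level-crossing time as in the paper (or else strengthen the hypotheses to those of Section~\ref{sec:BEunbdd}).
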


\begin{proof}
To apply the strong Markov property of $X$, we prepare some  notations.
For $r\in [0,\infty )$ and $x\in (l,\infty )$, let $X_t^{x,r}$ be the pathwise-unique solution of the SDE:
\begin{equation}\label{eq:SDE12r}\left\{\begin{array}{l}
dX_t^{x,r} = \sigma (r+t,X_t^{x,r}) dB_t + b(r+t,X_t^{x,r})dt, \\
X_0^{x,r} =x \in (l,\infty ).
\end{array}\right.\end{equation}
Also we define $Y_t^{x,r}$ for $r\in [0,\infty )$ and $x\in (l,\infty )$ as the pathwise-unique solution of the SDE:
\begin{equation}\label{eq:SDEYr}\left\{\begin{array}{l}
dY_t^{x,r} = \sigma (r+t,Y_t^{x,r}) dB_t + b_1 dt, \\
Y_0^{x,r} =x \in {\mathbb R}.
\end{array}\right.\end{equation}
Clearly, it holds that $X_t^{x,0} = X_t^x$ and $Y_t^{x,0} = Y_t^x$ almost surely, where $Y_t^x$ is the solution of \eqref{eq:SDEY}.
We remark that $\sigma (r+\cdot, \cdot)$ and $b(r+\cdot, \cdot )$ satisfy Assumption~\ref{ass3} instead of $\sigma$ and $b$ respectively, with the same constants $\sigma _1$, $\sigma _2$, $\sigma _3$, $b_1$, $b_3$, $\alpha$ and $\beta$ for all $r\in [0,\infty )$.
Let
\[
\overline{\sigma _\infty ^r}(t) := \sqrt{\frac{1}{t} \int _0^t \sigma _\infty (r+s)^2 ds} \quad \mbox{and} \quad \overline{b _\infty ^r}(t) := \frac{1}{t} \int _0^t b_\infty (r+s) ds
\]
for $r\in [0,\infty )$.

From \eqref{ass:s22}, \eqref{ass:b02}, \eqref{ass:b22} and  \eqref{ass:sb2} we can choose $y_2\in [y_1, \infty )$ and $b_2\in [0,\infty )$ so that
\begin{align*}
&\left| \frac{\partial}{\partial y} \sigma (t,y)\right| \leq \frac{\sigma _3}{\alpha} y^{-\alpha -1} , \quad (t,y)\in [0,\infty )\times [y_2, \infty ), \\
&b (t,y) \leq b_2 , \quad (t,y)\in [0,\infty )\times [y_2, \infty ), \\
&\left| \frac{\partial}{\partial y} b(t,y)\right| \leq \frac{b_3}{\beta} y^{-\beta -1} , \quad (t,y)\in [0,\infty )\times [y_2, \infty ), \\
&\sup _{(t,y) \in [0,\infty )\times [y_2,\infty )} \left( \frac{q(q+1)}{q-1} \left( \frac{\partial \sigma }{\partial y} (t,y) \right) ^2 +2q \frac{\partial b}{\partial y}(t,y) \right) < \frac{\sigma _1^2 b_1^2}{2} .
\end{align*}
Choose $\widetilde{b}\in C_b^{0,1}([0,\infty ) \times {\mathbb R})$ which satisfies
\begin{align*}
\widetilde{b}(t,y) &= b(t,y), \quad (t,y) \in [0,\infty )\times [y_2 +2, \infty ),\\
\widetilde{b}(t,y) &\leq b(t,y), \quad (t,y) \in [0,\infty )\times (l, y_2 +2 ),
\end{align*}
and
\begin{align*}
& \sup _{(t,y)\in [0,\infty ) \times {\mathbb R}} \left( \frac{q(q+1)}{q-1} \left( \frac{\partial \sigma }{\partial y} (t,y) \right) ^2 +2q \frac{\partial \widetilde{b}}{\partial y} (t,y) \right) < \frac{\sigma _1^2 (b_1-\varepsilon )^2}{2}, \\
& \inf _{(t,y)\in [0,\infty ) \times  {\mathbb R}}\widetilde{b}(t,y) \geq b_1 -\varepsilon
\end{align*}
with sufficiently small $\varepsilon >0$.
In the case that $l\in {\mathbb R}$, for example, if $\varepsilon >0$ is chosen so that $0<\varepsilon< b_1 $ and 
\[
\sup _{(t,y)\in [0,\infty ) \times [y_2,\infty )} \left( \frac{q(q+1)}{q-1} \left( \frac{\partial \sigma }{\partial y} (t,y) \right) ^2 +2q \frac{\partial b}{\partial y} (t,y) \right) < \frac{\sigma _1^2 (b_1-\varepsilon )^2}{2} ,
\]
and $\widetilde{b}$ is defined by
\[
\widetilde{b}(t,y) = \begin{dcases}
b(t, y_2 +2\delta) - \int _{y\vee l}^{y_2+2\delta} \max \left\{ \frac{\partial}{\partial z}b(t,z),0 \right\} dz \\ \qquad - \int _{y\vee l}^{y_2+2\delta} \varphi (z) \min \left\{ \frac{\partial}{\partial z}b(t,z),0 \right\}  dz, &(t,y) \in [0,\infty )\times (-\infty , y_2 +2\delta) , \\
b(t,y), & (t,y) \in [0,\infty )\times [y_2+2\delta, \infty )
\end{dcases}
\]
where $\delta \in \left( 0, \frac{2\varepsilon }{b_1^2}\wedge 1 \right)$ and $\varphi \in C^\infty ({\mathbb R})$ is chosen so that 
${\bf 1}_{[y_2 +2\delta,\infty )} \leq \varphi \leq {\bf 1}_{[y_2 +\delta , \infty )}$, 
then the pair $( \widetilde{b}, \varepsilon )$ satisfies the conditions above.

Consider the solution $\widetilde{X}_t^{x,r}$ of \eqref{eq:SDE12r} with replacement of $b$ by $\widetilde{b}$.
Then, similarly to Section~\ref{sec:BE}, the comparison principle (see \cite[Theorem 1.3]{Ya73}) implies that for $r\in [0,\infty )$
\begin{equation}\label{eq:comparison10}
Y_t^{x,r} \leq \widetilde{X}_t^{x,r} \leq X_t^{x,r}, \quad t\in [0,\infty ) \quad \mbox{almost surely}.
\end{equation}
For $M \in ( (l+2)\vee x \vee 1 ,\infty )$, define a hitting time $\tau _{2M}$ by
\[
\tau _{2M}(w) := \inf \{ t\geq 0: w(t) =2M\} , \quad w \in C([0,\infty )).
\]
Let $L_1$ and $L_2$ be constants such that $1+\frac{\sigma _2^2}{b_1} < L_1< b_1 L_2$.
We choose $t>0$ sufficiently large so that $\log t > (y_2 +2) \vee x \vee 1$ from now on.
Then, we have
\begin{equation}\label{eq:unbdd01}\begin{array}{l}
\displaystyle \left| P\left(\frac{X_t^x -x - t \overline{b _\infty}(t)}{\overline{\sigma _\infty}(t) \sqrt{t}} \in A \right) - P(Z\in A)\right|\\
\displaystyle \leq \left| P\left(\frac{X_t^x -x - t \overline{b_\infty}(t)}{\overline{\sigma _\infty}(t) \sqrt{t}} \in A, \ L_2 \log t >\tau _{L_1\log t}(X^x) , 
\ \inf _{s\in [0,\infty )} X_{s+\tau _{L_1\log t}(X^x)}^x \geq \log t \right) \right. \\
\displaystyle \quad \hspace{10.5cm} \left. \phantom{\frac{1}{2}}- P(Z\in A) \right| \\
\displaystyle \quad + P\left( L_2 \log t \leq \tau _{L_1\log t}(X^x) \right) + P\left( \inf _{s\in [0,\infty )} X_{s+\tau _{L_1\log t}(X^x)}^x < \log t \right) .
\end{array}\end{equation}
Since $b(t,y)= \widetilde{b}(t,y)$ for $(t,y) \in [0,\infty )\times [y_2 +2, \infty )$, it holds that for $r\in [0,\infty )$ and $y_2 +2 \leq \log t \leq \tilde{x}$
\[
\inf _{s\in [0 ,\infty )} X_s^{\tilde{x},r} \geq \log t \Longrightarrow X_s^{\tilde{x},r} = \widetilde{X}_s^{\tilde{x},r},\ s\geq 0
\]
almost surely.
Hence, the strong Markov properties of $X$ and $\widetilde{X}$ imply
\begin{align*}
&P\left(\frac{X_t^x -x - t \overline{b_\infty}(t)}{\overline{\sigma _\infty}(t) \sqrt{t}} \in A, \ L_2 \log t >\tau _{L_1\log t}(X^x) , \ \inf _{s\in [0,\infty )} X_{s+\tau _{L_1\log t}(X^x)}^x \geq \log t \right) \\
&= \int _{[0,L_2\log t]} P\left(\frac{X^{L_1\log t, r}_{t-r} -x - t\overline{b_\infty}(t)}{\overline{\sigma _\infty}(t) \sqrt{t}} \in A, \ \inf _{s\in [0,\infty )} X^{L_1\log t ,r}_s \geq \log t \right) P( \tau _{L_1\log t}(X^x) \in dr) \\
&= \int _{[0,L_2\log t]} P\left(\frac{\widetilde{X}^{L_1\log t ,r}_{t-r} -x -t\overline{b_\infty}(t)}{\overline{\sigma _\infty}(t) \sqrt{t}} \in A, \ \inf _{s\in [0,\infty )} \widetilde{X}^{L_1\log t ,r}_s \geq \log t \right) P( \tau _{L_1\log t}(X^x) \in dr)  \\
&= \int _{[0,L_2\log t]} P \left(\frac{\widetilde{X}^{L_1\log t ,r}_{t-r} - L_1 \log t - (t-r)\overline{b_\infty ^r}(t-r)}{\overline{\sigma _\infty ^r}(t-r)\sqrt{t-r}} \in A_{t,r},\right. \\
&\quad \hspace{6cm} \left. \phantom{\frac{1}{2}} \inf _{s\in [0,\infty )} \widetilde{X}^{L_1\log t ,r}_s \geq \log t \right) P( \tau _{L_1\log t}(X^x) \in dr),
\end{align*}
where
\[
A_{t,r} := \frac{\overline{\sigma _\infty}(t)}{\overline{\sigma _\infty ^r}(t-r)}\sqrt{\frac{t}{t-r}}A+ \frac{x + t\overline{b_\infty}(t)}{\overline{\sigma _\infty ^r}(t-r)\sqrt{t-r}} - \frac{L_1 \log t + (t-r)\overline{b_\infty ^r}(t-r)}{\overline{\sigma _\infty ^r}(t-r)\sqrt{t-r}}.
\]
By this equality and \eqref{eq:unbdd01}, we have
\begin{equation}\label{eq:unbdd02}\begin{array}{l}
\displaystyle\left| P\left(\frac{X_t^x -x -t\overline{b_\infty}(t)}{\overline{\sigma _\infty}(t) \sqrt{t}} \in A \right) - P(Z\in A)\right|\\
\displaystyle \leq \int _{[0,L_2\log t]} \left| P \left( \frac{\widetilde{X}^{L_1\log t ,r}_{t-r} - L_1 \log t - (t-r)\overline{b_\infty ^r}(t-r)}{\overline{\sigma _\infty ^r}(t-r)\sqrt{t-r}} \in A_{t,r} \right) - P\left( Z\in A_{t,r}\right) \right| \\
\displaystyle \hspace{9cm} \times P( \tau _{L_1\log t}(X^x) \in dr) \\
\displaystyle \quad + \int _{[0,L_2\log t]} 
\left| P\left( Z\in A_{t,r}\right) - P(Z\in A) \right| P( \tau _{L_1\log t}(X^x) \in dr)\\
\displaystyle \quad + \int _{[0,L_2\log t]} P\left( \inf _{s\in [0,\infty )} \widetilde{X}^{L_1\log t ,r}_s < \log t \right) P( \tau _{L_1\log t}(X^x) \in dr) \\
\displaystyle \quad + 2 P\left( L_2\log t \leq \tau _{L_1\log t}(X^x) \right) \\
\displaystyle \quad + P\left( \inf _{s\in [0,\infty )} X_{s+\tau _{L_1\log t}(X^x)}^x < \log t \right) .
\end{array}\end{equation}
Applying Theorem~\ref{thm:BE1} to $\widetilde{X}_t^{L_1 \log t ,r}$, we see that there exists a constant $C>0$ independent of $t$ and $r$ satisfying
\[
d_{\rm TV} \left( {\rm Law} \left( \frac{\widetilde{X}^{L_1\log t ,r}_{t-r} - L_1 \log t - (t-r)\overline{b_\infty ^r}(t-r)}{\overline{\sigma _\infty ^r}(t-r)\sqrt{t-r}} \right) , {\rm Law}(Z)\right) \leq \frac{C}{\sqrt{t-r}}, \quad t-r \geq 1.
\]
This implies
\[
\left| P \left( \frac{\widetilde{X}^{L_1\log t ,r}_{t-r} - L_1 \log t - (t-r)\overline{b_\infty ^r}(t-r)}{\overline{\sigma _\infty ^r}(t-r)\sqrt{t-r}} \in A_{t,r} \right) - P\left( Z\in A_{t,r}\right) \right| \leq \frac{C}{\sqrt{t-r}}
\]
for $r\in \left[ 0, (L_2 \log t) \wedge (t-1) \right]$. Hence, we have
\begin{equation}\label{eq:cormain2-06}\begin{array}{l}
\displaystyle \int _{[0,L_2\log t]} \left| P \left( \frac{\widetilde{X}^{L_1\log t ,r}_{t-r} - L_1 \log t - (t-r)\overline{b_\infty ^r}(t-r)}{\overline{\sigma _\infty ^r}(t-r)\sqrt{t-r}} \in A_{t,r} \right) - P\left( Z\in A_{t,r}\right) \right| \\
\displaystyle \hspace{9cm} \times P( \tau _{L_1\log t}(X^x) \in dr) \\
\displaystyle \leq \frac{\sqrt{2}C}{\sqrt{t}}
\end{array}
\end{equation}
for sufficiently large $t$ so that $0\leq L_2 \log t \leq \frac{t}{2}$.

The strong Markov property of $X$ and \eqref{eq:comparison10} imply
\begin{align*}
&\int _{[0,L_2\log t]} P\left( \inf _{s\in [0,\infty )} \widetilde{X}^{L_1\log t ,r}_s < \log t \right) P( \tau _{L_1\log t}(X^x) \in dr) \\
&\quad \hspace{6cm} + P\left( \inf _{s\in [0,\infty )} X^x_{s+\tau _{L_1 \log t}(X^x)} < \log t \right) \\
&\leq \sup _{r\in [0,\infty )} \left( P\left( \inf _{s\in [0,\infty )} \widetilde{X}^{L_1\log t ,r }_s < \log t \right)  + P \left( \inf _{s\in [0,\infty )} X^{L_1\log t ,r}_s < \log t \right) \right) \\
&\leq 2 \sup _{r\in [0,\infty )} P\left( \inf _{s\in [0,\infty )} Y_s^{L_1 \log t ,r} < \log t \right) .
\end{align*}
Hence, in view of Proposition~\ref{prop:asymY2}(i) and $L_1- 1 > \frac{\sigma _2^2}{b_1}$, this yields
\begin{equation}\label{eq:cormain2-07}\begin{array}{rl}
\displaystyle \int _{[0,L_2\log t]} P\left( \inf _{s\in [0,\infty )} \widetilde{X}^{L_1\log t ,r}_s < \log t \right) P( \tau _{L_1\log t}(X^x) \in dr) &\\
\displaystyle + P\left( \inf _{s\in [0,\infty )} X^x_{s+\tau _{L_1 \log t}(X^x)} < \log t \right) &\displaystyle \leq \frac{C}{\sqrt{t}} , \quad t\geq 2.
\end{array}\end{equation}
Moreover, \eqref{eq:comparison10} implies that 
for sufficiently large $t$,
\begin{align*}
P\left( L_2 \log t \leq \tau _{L_1 \log t}(X^x) \right)
&= P\left( \sup _{s\in [0,L_2 \log t]}X_s^x \leq L_1 \log t \right) \\
&\leq P\left( \sup _{s\in [0,L_2 \log t]}Y_s^x \leq L_1 \log t \right) \\
&\leq P\left( Y_{L_2 \log t}^x \leq L_1 \log t \right).
\end{align*}
Hence, in view of Proposition~\ref{prop:asymY2}(ii) and $L_1 < b_1 L_2$ it holds that
\begin{equation}\label{eq:cormain2-08}
P\left( L_2 \log t \leq \tau _{L_1 \log t}(X^x) \right) \leq \frac{C}{\sqrt{t}}, \quad t\geq 2
\end{equation}
with a positive constant $C$ independent of $t$.
Note that for sufficiently large $t$ and $r\in (0,L_2\log t]$ it holds that
\begin{align*}
&\left| \frac{\overline{\sigma _\infty ^r}(t-r)\sqrt{t-r}}{\overline{\sigma _\infty}(t)\sqrt{t}} - 1 \right| \\
&\leq \frac{\sqrt{t}| \overline{\sigma _\infty ^r}(t-r) - \overline{\sigma _\infty}(t)| + \overline{\sigma _\infty ^r}(t-r) |\sqrt{t-r} -\sqrt{t}|} {\overline{\sigma _\infty}(t)\sqrt{t}} \\
&\leq \frac{1}{\sigma _1} \left| \sqrt{\frac{1}{t-r} \int _r^t \sigma _\infty (s)^2 ds} - \sqrt{\frac{1}{t} \int _0^t \sigma _\infty (s)^2 ds} \right| + \frac{\sigma _2}{\sigma _1} \cdot \frac{r}{\sqrt{t}(\sqrt{t} + \sqrt{t-r})}\\
&\leq \frac{1}{2 \sigma _1 ^2} \left| \frac{1}{t-r} \int _r^t \sigma _\infty (s)^2 ds - \frac{1}{t} \int _0^t \sigma _\infty (s)^2 ds \right| + \frac{\sigma _2r}{\sigma _1t} \\
&\leq \frac{\sigma _2^2 r}{\sigma _1^2 (t-r)} + \frac{\sigma _2r}{\sigma _1t} \leq \frac{4\sigma _2^2 L_2}{\sigma _1^2} t^{-1}\log t.
\end{align*}
Since
\[
P\left( Z\in A_{t,r} \right) = P\left( \frac{\overline{\sigma _\infty ^r}(t-r)\sqrt{t-r}}{\overline{\sigma _\infty}(t)\sqrt{t}} Z + \eta _{t,r} \in A \right)
\]
where
\[
\eta _{t,r} := \frac{x + t\overline{b_\infty}(t)}{\overline{\sigma _\infty}(t)\sqrt{t}} - \frac{L_1 \log t + (t-r)\overline{b_\infty ^r}(t-r)}{\overline{\sigma _\infty }(t)\sqrt{t}},
\]
in view of Proposition~\ref{prop:TVG} we get 
\begin{equation}\label{eq:normal-dist-3}
\begin{split}
&\int _{[0,L_2 \log t]} 
\left| P\left( Z\in A_{t,r} \right) - P(Z\in A) \right| P( \tau _{L_1 \log t}(X^x) \in dr)\\
&\le  C \left( t^{-1}\log t + \int _{[0,L_2 \log t]} |\eta _{t,r}| P( \tau _{L_1 \log t}(X^x) \in dr) \right) .
\end{split}
\end{equation}
Since for sufficiently large $t$ and $r\in (0,L_2\log t]$
\begin{align*}
|\eta _{t,r}| &\leq \frac{1}{\overline{\sigma _\infty }(t)\sqrt{t}} \left| x + t\overline{b_\infty}(t)-  (L_1 \log t + (t-r)\overline{b_\infty ^r}(t-r)) \right| \\
&\leq \ \frac{1}{\sigma _1 \sqrt{t}}\left( |x| + L_1 \log t + \left| \int _0^t b_\infty (s)ds - \int _r^t b_\infty (s) ds\right| \right) \\
&\leq Ct^{-\frac{1}{2}}(|x|+ \log t),
\end{align*}
for a constant $C>0$ independent of $t$ and $r$, it holds that
\[
\int _{[0,L_2 \log t]} |\eta _{t,r}| P( \tau _{L_1 \log t}(X^x) \in dr) \leq Ct^{-\frac{1}{2}}(|x|+ \log t) .
\]
From this inequality and \eqref{eq:normal-dist-3}, we obtain
\begin{equation}\label{eq:cormain2-10}
\begin{split}
&\int _{[0,L_2 \log t]} 
\left| P\left( Z\in A_{t,r}\right) - P(Z\in A) \right| P( \tau _{L_1 \log t}(X^x) \in dr) \\
&\leq C t^{-\frac{1}{2}} \log t , \quad t\geq 2,
\end{split}
\end{equation}
where $C$ is a positive constant independent of $A$ and $t$.

Therefore, by \eqref{eq:unbdd02}, \eqref{eq:cormain2-06} \eqref{eq:cormain2-07}, \eqref{eq:cormain2-08} and \eqref{eq:cormain2-10} we obtain the conclusion.
\end{proof}

\section*{Acknowledgments}
This work was supported by JSPS KAKENHI Grant Numbers 19H00643, 21H00988, 22H00099, 22K18675, 23K03155, 23K20801 and 23K25773.

\appendix

\section{Appendix}\label{sec:Appendix}

\begin{prop}\label{prop:TVG}
Let $Y$ be a $d$-dimensional random vector which has the centered normal distribution with covariance matrix $V$.
Then, for any nonrandom $v\in {\mathbb R}^d$ and $a>0$, it holds that
\[
\sup _{A\in {\mathcal B}({\mathbb R}^d)} \left| P(Y\in A) - P( a Y + v \in A) \right| 
\leq 2\left| a^{d}- 1\right| 
+ C \left| V^{-\frac{1}{2}} v\right|
\]
where 
$C= \frac{1}{(2\pi )^{\frac{d}{2}}} \int _{{\mathbb R}^d} \left| y\right| _{{\mathbb R}^d}\exp \left( - \frac{1}{2} |y |_{{\mathbb R}^d} ^2\right) dy
=\frac{\sqrt{2}\Gamma((d+1)/2)}{\Gamma(d/2)}$.
\end{prop}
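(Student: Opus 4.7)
My plan is to reduce to the standard Gaussian case by a linear change of variable, then split the estimate via the triangle inequality into a pure translation part and a pure scaling part, each handled by an elementary density computation.

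First, I would set $Z := V^{-1/2} Y \sim N(0, I_d)$ and $w := V^{-1/2} v$. Since $Y \in A$ iff $Z \in V^{-1/2} A$, analogously for $aY + v \in A$ iff $aZ + w \in V^{-1/2}A$, and since $A \mapsto V^{-1/2} A$ is a bijection on $\mathcal{B}(\mathbb{R}^d)$, the claim reduces to showing
\[
\sup_{A \in \mathcal{B}(\mathbb{R}^d)} \bigl| P(Z \in A) - P(aZ + w \in A) \bigr| \leq 2|a^d - 1| + C |w|.
\]
Inserting the intermediate law of $Z + w$ via the triangle inequality, the supremum is bounded by $D_1 + D_2$, where
\[
D_1 := \sup_A \bigl| P(Z \in A) - P(Z + w \in A) \bigr|, \qquad D_2 := \sup_A \bigl| P(Z + w \in A) - P(aZ + w \in A) \bigr|,
\]
and translating $A$ by $-w$ identifies $D_2$ with $\sup_A \bigl| P(Z \in A) - P(aZ \in A) \bigr|$.

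For the translation contribution $D_1$, I would use Scheff\'e's identity $\sup_A |\int_A h\, du| = \frac{1}{2} \int |h|\, du$ for mean-zero integrands $h$, together with the identity $f_Z(y - w) - f_Z(y) = -\int_0^1 \nabla f_Z(y - tw) \cdot w \, dt$ and $|\nabla f_Z(u)| = |u|\, f_Z(u)$. Fubini and translation invariance then give
\[
D_1 \leq \tfrac{1}{2} |w| \int_{\mathbb{R}^d} |u|\, f_Z(u) \, du = \tfrac{1}{2} C |w|.
\]

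For the scaling contribution $D_2$, the density relation $f_{aZ}(y) = a^{-d} f_Z(y/a)$ and the change of variable $z = y/a$ yield
\[
\int \bigl| f_Z(y) - f_{aZ}(y) \bigr|\, dy = \int \bigl| a^d f_Z(az) - f_Z(z) \bigr|\, dz \leq a^d \int \bigl| f_Z(az) - f_Z(z) \bigr|\, dz + |a^d - 1|.
\]
The inner integral is controlled via $f_Z(az) - f_Z(z) = -\int_1^a t |z|^2 f_Z(tz)\, dt$ (from $\nabla f_Z(u) = -u f_Z(u)$) combined with the scaling identity $\int |z|^2 f_Z(tz)\, dz = d / t^{d+2}$; integration in $t$ gives $\int | f_Z(az) - f_Z(z)|\, dz \leq |1 - a^{-d}|$ in both cases $a \gtrless 1$. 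Plugging back, $\int | f_Z - f_{aZ} |\, dy \leq 2 |a^d - 1|$, hence $D_2 \leq |a^d - 1|$. Combining the two parts gives $D_1 + D_2 \leq C|w|/2 + |a^d - 1| \leq 2|a^d - 1| + C|w|$, which is the desired bound. The main technical step is the scaling estimate $D_2 \leq |a^d - 1|$, which requires interpolating along the one-parameter family $t \mapsto f_Z(tz)$ and using $E[|Z|^2] = d$; the translation bound is the standard Gaussian gradient identity, and the reduction to the standard Gaussian is a routine linear change of variable.
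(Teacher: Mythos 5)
Your proof is correct, and in fact it yields the slightly sharper constant $|a^d-1|+\tfrac{1}{2}C\,|V^{-\frac{1}{2}}v|$. The paper works directly with the two Gaussian densities on $\mathbb{R}^d$ and splits their difference integrated over $A$ into three pieces $I_1,I_2,I_3$: the change in the normalizing constant $(2\pi)^{-d/2}(\det V)^{-1/2}$, the change in the quadratic form in the exponent (interpolated linearly in its coefficient $\theta$ from $1$ to $a^{-2}$), and the shift by $v$ (interpolated in $\theta\mapsto y-\theta v$); each is then bounded crudely by replacing $\bigl|\int_A\bigr|$ with $\int_{\mathbb{R}^d}|\cdot|$, giving $|a^d-1|+|a^d-1|+C|V^{-1/2}v|$. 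You instead conjugate by $V^{-1/2}$ to reduce to the standard Gaussian, insert the intermediate law of $Z+w$ via the triangle inequality so that the two pieces are a pure translation and a pure dilation, and invoke Scheff\'e's identity $\sup_A|P_1(A)-P_2(A)|=\tfrac{1}{2}\int|f_1-f_2|$, which saves a factor of two relative to the paper's crude bound. The interpolation estimates underneath are the same calculus the paper uses---the gradient identity $\nabla f_Z(u)=-u\,f_Z(u)$ for the translation, and interpolation in the scale parameter together with $E|Z|^2=d$ for the dilation (your $t$-parametrization and the paper's $\theta$-parametrization with $\theta\approx t^{\pm 2}$ are the same interpolation after the change of variable)---just carried out in normalized coordinates, and your dilation estimate internally splits normalization and exponent exactly as $I_1$ and $I_2$ do. Reducing to the standard Gaussian and invoking Scheff\'e simplify the bookkeeping and tighten the constant at no cost, so this is a cleaner route to the same estimate.
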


\begin{proof}
We have
\begin{equation}\label{eq:propTVG01}
\left| P(Y\in A) - P( a Y + v \in A) \right| \leq I_1 + I_2 + I_3
\end{equation}
where
\begin{align*}
I_1 &:= \frac{1}{(2\pi )^{\frac{d}{2}}} \left| \frac{1}{\sqrt{{\rm det}V}} - \frac{1}{\sqrt{{\rm det}(a^2V)}} \right| \int _A \exp \left(- \frac{1}{2} \langle (y-v), (a^2V)^{-1}(y-v) \rangle \right) dy, \\
I_2 &:= \frac{1}{(2\pi )^{\frac{d}{2}}} \frac{1}{\sqrt{{\rm det}V}} \left| \int _A \left[ \exp \left(- \frac{1}{2} \langle (y-v), V^{-1}(y-v) \rangle \right) \right. \right. \\
&\quad \hspace{6cm} \left. \left. - \exp \left( - \frac{1}{2} \langle (y-v), (a^2 V)^{-1}(y-v) \rangle \right) \right] dy \right|, \\
I_3 &:= \frac{1}{(2\pi )^{\frac{d}{2}}\sqrt{{\rm det}V}} \left| \int _A \left[ \exp \left(- \frac{1}{2} \langle y, V^{-1}y\rangle \right) - \exp \left( - \frac{1}{2} \langle (y-v), V^{-1}(y-v) \rangle \right) \right] dy \right| .
\end{align*}
We estimate $I_1$, $I_2$ and $I_3$. 
Since
\[
I_1 = \left| a^d - 1\right| \frac{1}{{(2\pi )^{\frac{d}{2}} \sqrt{{\rm det} (a^2V)}}} \int _A \exp \left( - \frac{1}{2} \langle (y-v), (a^2V)^{-1}(y-v) \rangle \right) dy ,
\]
we have
\begin{equation}\label{eq:propTVG02}
I_1 \leq \left| a^d - 1\right| .
\end{equation}
By the change of variables formula with $\tilde{y}=y-v$, 
\begin{align*}
I_2 &= \frac{1}{(2\pi )^{\frac{d}{2}}\sqrt{{\rm det}V}} \left| \int _A \left[ \exp \left(- \frac{1}{2} \langle \tilde{y}, V^{-1}\tilde{y} \rangle \right) - \exp \left( - \frac{a^{-2}}{2} \langle \tilde{y}, V^{-1}\tilde{y} \rangle \right) \right] d\tilde{y} \right| \\
&= \frac{1}{(2\pi )^{\frac{d}{2}}\sqrt{{\rm det}V}} \left| \int _A \left[ \int _1^{a^{-2}} \frac{\langle \tilde{y}, V^{-1}\tilde{y} \rangle}{2} \exp \left( - \frac{\theta}{2} \langle \tilde{y}, V^{-1}\tilde{y} \rangle \right) d\theta \right] d\tilde{y} \right| \\
&\leq  \frac{1}{2}\int _{1\wedge a^{-2}}^{1\vee a^{-2}} \left[ \frac{1}{(2\pi )^{\frac{d}{2}}\sqrt{{\rm det}V}} \int _{{\mathbb R}^d} \langle \tilde{y}, V^{-1}\tilde{y} \rangle \exp \left( - \frac{\theta}{2} \langle \tilde{y}, V^{-1}\tilde{y} \rangle \right) d\tilde{y} \right] d\theta \\
&=  \frac{d}{2} \int _{1\wedge a^{-2}}^{1\vee a^{-2}} \theta ^{-\frac{d+2}{2}} d\theta .
\end{align*}
we have
\begin{equation}\label{eq:propTVG03}
I_2 \leq \left| a^{d} - 1\right| .
\end{equation}
The term $I_3$ is dominated as
\begin{align*}
I_3 &= \frac{1}{(2\pi )^{\frac{d}{2}}\sqrt{{\rm det}V}} \left| \int _A \left[ \int _0^1 \langle V^{-1}(y- \theta v) , v \rangle \exp \left( - \frac{1}{2} \langle (y-\theta v), V^{-1}(y- \theta v) \rangle \right) d\theta \right] dy \right| \\
&\leq \left| V^{-\frac{1}{2}} v\right| \int _0^1 \left[ \frac{1}{(2\pi )^{\frac{d}{2}}\sqrt{{\rm det}V}} \int _{{\mathbb R}^d} \left| V^{-\frac{1}{2}} (y- \theta v) \right| \exp \left( - \frac{1}{2} \langle (y-\theta v), V^{-1}(y- \theta v) \rangle \right) dy \right] d\theta \\
&= \left| V^{-\frac{1}{2}} v\right| \frac{1}{(2\pi )^{\frac{d}{2}}} \int _{{\mathbb R}^d} \left| \tilde{y} \right| _{{\mathbb R}^d} 
\exp \left( - \frac{1}{2} |\tilde{y} |_{{\mathbb R}^d} ^2\right) d\tilde{y} \\
&= \left| V^{-\frac{1}{2}} v\right| \frac{\sqrt{2}\Gamma((d+1)/2)}{\Gamma(d/2)}.
\end{align*}
From this inequality, \eqref{eq:propTVG01}, \eqref{eq:propTVG02} and \eqref{eq:propTVG03} we have the assertion.
\end{proof}


\end{document}